\def\@currentlabel{2.1}\label{e:dispaa}
\def\@currentlabel{2.21}\label{e:dispau}
\def\@currentlabel{2.22}\label{e:dispav}
\def\@currentlabel{2.23}\label{e:dispaw}
\def\@currentlabel{2.24}\label{e:dispax}
\def\theequation{\thesection.\@arabic\c@equation}
\let\oldbibliography\thebibliography
\renewcommand{\thebibliography}[1]{%
\oldbibliography{#1}%
\setlength{\itemsep}{0pt}%
}
\renewcommand{\theequation}{\thesection.\arabic{equation}}
\newtheorem{lemma}{Lemma}[section]
\newtheorem{definition}{Definition}
\newtheorem{proposition}{Proposition}[section]
\newtheorem{corollary}{Corollary}[section]
\newtheorem{remark}{Remark}[section]
\newtheorem{conjecture}{Conjecture}[section]
\newtheorem{open problem}{Open Problem}[section]
\newtheorem{open question}{Open Quesion}[section]
\newcommand{\bremark}{\begin{remark} \em}
\newcommand{\eremark}{\end{remark} }
\newtheorem{numerical/experimental results}{Numerical/Experimental results}[section]
\newtheorem{theorem}{Theorem}[section]
\newcommand{\R}{ {\mathbb R}}
\newcommand{\BE}{\begin{equation}}
\newcommand{\BEN}{\begin{equation*}}
\newcommand{\EE}{\end{equation}}
\newcommand{\EEN}{\end{equation*}}
\newcommand{\BL}{\begin{lemma}}
\newcommand{\EL}{\end{lemma}}
\newcommand{\BT}{\begin{theorem}}
\newcommand{\ET}{\end{theorem}}
\newcommand{\BP}{\begin{proposition}}
\newcommand{\EP}{\end{proposition}}
\newcommand{\BC}{\begin{corollary}}
\newcommand{\EC}{\end{corollary}}
\renewcommand{\Re}{\operatorname{Re}}
\renewcommand{\Im}{\operatorname{Im}}
\begin{document}


\title[Minima of difference of theta functions]{\bf On minima of  difference of theta functions and application to hexagonal crystallization}

\author{Senping Luo}

\author{Juncheng Wei}

\address[S.~Luo]{School of Mathematics and statistics, Jiangxi Normal University, Nanchang, 330022, China}
\address[J.~Wei]{Department of Mathematics, University of British Columbia, Vancouver, B.C., Canada, V6T 1Z2}

\email[S.~Luo]{luosp1989@163.com or spluo@jxnu.edu.cn}

\email[J.~Wei]{jcwei@math.ubc.ca}

\begin{abstract}
Let  $z=x+iy \in \mathbb{H}:=\{z= x+ i y\in\mathbb{C}: y>0\}$  and $
\theta (\alpha;z)=\sum_{(m,n)\in\mathbb{Z}^2 } e^{-\alpha \frac{\pi }{y }|mz+n|^2}$ be the theta function associated with the lattice $L ={\mathbb Z}\oplus z{\mathbb Z}$. In this paper we consider the following minimization problem of difference of two theta functions
\begin{equation}\aligned\nonumber
\min_{   \mathbb{H} } \Big(\theta (\alpha; z)-\beta\theta (2\alpha; z)\Big)
\endaligned\end{equation}
where $\alpha \geq 1$ and $ \beta \in (-\infty, +\infty)$. We prove that there is a critical value $\beta_c=\sqrt2$ (independent of $\alpha$) such that if $\beta\leq\beta_c$, the minimizer  is $\frac{1}{2}+i\frac{\sqrt3}{2}$ (up to translation and rotation) which corresponds to the hexagonal lattice, and if $\beta>\beta_c$, the minimizer does not exist.  Our result  partially answers some questions raised in  \cite{Bet2016, Bet2018, Bet2020, Bet2019AMP} and gives a new proof in   the crystallization of hexagonal
lattice under Yukawa potential.

\end{abstract}

\maketitle


\section{Introduction and Statement of Main Results}
\setcounter{equation}{0}

Let $L$ be a two dimensional lattice. A large class of physical problems can be reduced to the following minimization problem:
 \begin{equation}\aligned\label{EFL}
\min_L E_f(\Lambda ), \;\;\hbox{where}\;\;E_f(L):=\sum_{\mathbb{P}\in L \backslash\{0\}} f(|\mathbb{P}|^2).
\endaligned\end{equation}
The function $f$ denotes the potential of the system and  the summation ranges over all the lattice points except for the origin $0$. The function
$E_f(L)$ denotes the total energy of the system under the background potential $f$ over a periodical lattice $L$, which arises in various physical problems (\cite{Bet2015,Bet2016,Bet2018,Bet2019,Bet2019AMP,Betermin2021JPA, Serfaty2018}). For example there is a clear connection of lattice sum and Abrikosov vortex lattices (see e.g. \cite{Abr}, \cite{Che2007}, \cite{Sigal2018}, \cite{Serfaty2010}, \cite{Serfaty2012}, \cite{Serfaty2014}, \cite{Luo2019}).
In the physical aspect, $E_f(L)$ refers to crystal lattice energy (\cite{ChenPRL1990,Shen2005}) and Hamiltonian of crystals with long-ranged interaction (\cite{MatAMS}). The function $E_f(L)$ has deep link with the partition function which is fundamental in equilibrium statistical physics. Locating the minimizer of such a total energy $E_f(L)$ over all the shapes of the lattices has important applications  in physics (\cite{Betermin2021JPA, Fey, Ho2001,Mue2002, Mat1999}), number theory (see e.g., \cite{Osg1988}, \cite{Sar2006}, \cite{Bla2015}), adsorption on non-ideal surfaces \cite{Cero1993}, etc.  The application of number theory to physics has many aspects and some of them reveal unexpected discovery. Significant examples of this direction went back to \cite{Cohen}, \cite{ChenPRL1990}, \cite{Sch2011}, \cite{Yuan2010} and the references therein. For other examples see  the book \cite{Number}.

Let $ z\in \mathbb{H}:=\{z= x+ i y\in\mathbb{C}: y>0\}$ and $L ={\mathbb Z}\oplus z{\mathbb Z}$ be the lattice in $ \mathbb{R}^2$.
When $ f(\cdot)=e^{- \alpha |\cdot|^2}$ ($\alpha>0$) and the corresponding $E_f (L)$ becomes the the theta function
 \begin{equation}
 \label{thetas}
\theta (\alpha; z):=\sum_{\mathbb{P}\in\Lambda} e^{- \alpha |\mathbb{P}|^2}=\sum_{(m,n)\in\mathbb{Z}^2} e^{-\alpha \frac{\pi }{y }|mz+n|^2},
\end{equation}
a celebrated result of  Montgomery (\cite{Mon1988}) states that the hexagonal lattice attains the minimimizer in (\ref{EFL}).  By the classical Bernstein representation formula, Montgomery's result can be extended to any completely monotone functions, which are   $C^\infty(0,\infty)$ satisfying
\begin{equation}
\label{Complete}
(-1)^j f^{(j)}(x)>0, j=0,1,2,\cdots \infty.
\end{equation}

In  \cite{LW2021} we  have considered the following minimization problem of sum of two theta functions
\begin{equation}\aligned\label{EFL1}
\min_{z \in {\mathbb H}} \Big(\theta(\alpha; z)+\rho \theta (\alpha; \frac{z+1}{2})\Big)
\endaligned\end{equation}
and showed that the hexagonal-rhombic-square-rectangular transition appears as $\rho$ goes from $0$ to $+\infty$. This result can also be generalized to completely monotone functions.

 However, in many physical models, the potential function $ f$ may not be completely monotone. A classical example is the Lennard-Jones potential $ f(r)= \frac{1}{r^6}- \frac{2}{r^3}$ (see e.g. \cite{Baskes}) and its generalization
     $f(r)=\frac{a_1}{r^{t_1}}-\frac{a_2}{r^{t_2}}$, where $t_1>t_2>0, a_1,a_2>0$.    In \cite{Bet2018, Bet2020, Bet2019AMP},  B\'etermin and his collaborators   initiated  a theoretical study of the \eqref{EFL} with the Lennard-Jones type potential.  A numerical simulation suggests the hexagonal-rhombic-square-rectangular lattice phase transitions. Observe that the Lennard-Jones potential consists of two parts, each part being completely monotone. In addition, it is of one-well potential as introduced and defined in \cite{Bet2018}, i.e., there exists $a>0$ such that f is nonincreasing on $(0, a)$ and nondecreasing
on $(a, +\infty)$. The following conjecture was made in \cite{Bet2018}:
     \begin{conjecture}[\cite{Bet2018}, last page; open question 1.16  of \cite{Bet2019AMP}]\label{BetConjecture}
     \begin{itemize}
       \item The behavior of the minimizers of \eqref{EFL} with respect to the lattice area A is qualitatively the
same for all the Lennard-Jones type potentials (i.e., admits hexagonal-rhombic-square-rectangular lattice phase transitions);

       \item more generally, we can imagine that we should find the same result for any potential f written
as $f:=f_1-f_2$,
where $f_1$ and $f_2$ are both completely monotone and f is of one-well.
     \end{itemize}

     \end{conjecture}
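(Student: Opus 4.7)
\medskip
\noindent\textbf{Proof proposal.}

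The plan is to exploit the Bernstein representation of completely monotone functions. Writing $f_j(r)=\int_0^\infty e^{-\alpha r}\,d\mu_j(\alpha)$ for $j=1,2$ with positive Borel measures $\mu_j$, the lattice energy decomposes as
\begin{equation*}
E_f(L)=\int_0^\infty \bigl[\theta(\alpha;z)-1\bigr]\,(d\mu_1-d\mu_2)(\alpha),
\end{equation*}
after factoring $L$ into its area $A$ and shape $z\in\mathbb{H}$ (and absorbing constants into $\alpha$). Rescaling $\alpha\mapsto \alpha/A$ converts the dependence on lattice area into a signed one-parameter family of tests against the shape-theta $\theta(\cdot;z)$. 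I would then restrict to the fundamental domain $\mathcal{D}=\{z=x+iy:\,|z|\ge 1,\,0\le x\le\tfrac12,\,y>0\}$ and track how the minimizer traverses the boundary strata hexagonal $\to$ rhombic $\to$ square $\to$ rectangular as $A$ grows.

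For the Lennard-Jones case $f(r)=a_1 r^{-t_1}-a_2 r^{-t_2}$ the Bernstein measures are explicit: the Mellin transform $r^{-s}=\Gamma(s)^{-1}\int_0^\infty \alpha^{s-1}e^{-\alpha r}\,d\alpha$ gives $E_f(L)=a_1\zeta_L(t_1)-a_2\zeta_L(t_2)$ with $\zeta_L$ the Epstein zeta function. Pulling out the scale via $\zeta_L(s)=A^{-s/2}\zeta_z(s)$ reduces the shape optimization, for each fixed $A$, to
\begin{equation*}
\min_{z\in\mathcal{D}}\bigl\{\zeta_z(t_1)-\lambda(A)\,\zeta_z(t_2)\bigr\},\qquad \lambda(A):=\frac{a_2}{a_1}\,A^{(t_1-t_2)/2},
\end{equation*}
so $A$ small corresponds to $\lambda$ small and $A$ large to $\lambda$ large. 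The program is to detect three critical thresholds $\lambda_1<\lambda_2<\lambda_3$ (equivalently three critical areas) at which the minimizer successively leaves the hexagonal point, enters the square $z=i$, and then leaves along the imaginary axis into the rectangular regime.

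Concretely I would proceed in four steps. (i) At $\lambda=0$ Montgomery's theorem gives the hexagonal minimizer $e^{i\pi/3}$, and by the implicit function theorem the minimizer varies smoothly in $\lambda$ until a Hessian degeneracy. (ii) Compute the second variation of $\zeta_z(t_1)-\lambda\zeta_z(t_2)$ at the hexagonal point, at $z=i$, and along the imaginary axis to identify the candidate $\lambda_j$. (iii) On the boundary arcs $|z|=1$ and $x=0$ reduce to one-dimensional minimizations and argue using the paper's main theorem as a building block, applying it pointwise in a dyadic decomposition of the signed measure $d\mu_1-d\mu_2$ so that each elementary piece $\theta(\alpha;\cdot)-\beta(\alpha)\theta(2\alpha;\cdot)$ has controlled sign. (iv) Use the one-well hypothesis to prevent escape to the cusp $y\to\infty$: the increasing branch of $f$ on $(a,\infty)$ forces a coercivity bound on $E_f$ at degenerate shapes, ruling out nonexistence for moderate $A$.

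The main obstacle is step (iii). Since $f$ is not completely monotone, the usual Montgomery-style monotonicity arguments do not apply, and the two Bernstein pieces push in opposing directions across $\mathcal{D}$. Proving that $\partial_x E_f(z)=0$ pins the minimizer to the symmetry locus $x\in\{0,\tfrac12\}$, rather than some interior curve, requires a genuinely two-dimensional comparison. A promising route is to establish a parametric version of the $\beta_c=\sqrt2$ threshold, applied pointwise in the Bernstein parameter, so that each constituent difference $\theta(\alpha;z)-\beta(\alpha)\theta(2\alpha;z)$ has a sign independent of $z\in\mathcal{D}$; integrating against $d\mu_1-d\mu_2$ then yields the monotone-in-$\lambda$ motion of the minimizer and, in turn, the conjectured cascade of phase transitions. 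Implementing this uniformly across the full range of Bernstein parameters, and handling the marginal regime where the one-well condition is sharp, is where the bulk of the technical work lies.
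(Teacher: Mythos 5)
Your proposal sets out to \emph{prove} the conjectured hexagonal--rhombic--square--rectangular cascade, but the paper's treatment of this statement goes in the opposite direction: it \emph{disproves} the conjecture (at least its second, general bullet) by exhibiting an explicit counterexample. Taking $h_{\alpha_1,\beta}(r)=e^{-\pi\alpha_1 r}-\beta e^{-\frac{1}{2}\pi\alpha_1 r}$ with $0<\beta\leq\frac{\sqrt2}{2}$, one has a one-well potential that is a difference of two completely monotone functions, and by Theorems \ref{Th2} and \ref{Th3} the minimizer of the associated lattice energy is the hexagonal lattice at \emph{every} admissible density $\alpha_2\in(0,\frac{1}{\alpha_1})$. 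No rhombic, square, or rectangular phase ever appears, so the three critical thresholds $\lambda_1<\lambda_2<\lambda_3$ that your program aims to locate simply do not exist for this family. Any argument whose conclusion is the phase-transition cascade for all such potentials must therefore fail; steps (i), (ii), and (iv) of your outline are building toward a statement that is false.

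The irony is that your key technical idea in step (iii) --- a parametric version of the $\beta_c=\sqrt2$ threshold applied pointwise in the Bernstein parameter, so that each elementary difference $\theta(\alpha;z)-\beta(\alpha)\theta(2\alpha;z)$ has controlled behavior uniformly in $z$ --- is essentially the mechanism the paper develops (Theorems \ref{Th1}--\ref{Th4}, via the decomposition \eqref{FFGG} and the iteration \eqref{KKK}). But the correct output of that mechanism is that each elementary piece with $\beta\leq\beta_c$ is minimized at $\frac{1}{2}+i\frac{\sqrt3}{2}$, hence so is any nonnegative superposition, and the minimizer therefore \emph{never moves} as the area parameter varies. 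In other words, the tool you propose, carried to completion, yields rigidity of the hexagonal minimizer rather than the monotone-in-$\lambda$ motion you want to extract from it. Note also that the paper's counterexample does not settle the first bullet (the genuinely Lennard-Jones case $a_1 r^{-t_1}-a_2 r^{-t_2}$, i.e.\ differences of Epstein zeta functions), so your Mellin-transform reduction in that special case is not contradicted by the paper --- but it is also not established by it, and your proposal supplies no argument there beyond the (false in general) heuristic that the two Bernstein pieces must eventually force the minimizer off the hexagonal point.
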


For non-monotone potentials, there are other interesting open problems concerning the energy functional \eqref{EFL}, which we list some of them here:

\begin{conjecture}[Conjecture 2.7 of B\'etermin, Faulhuber and Kn$\ddot{u}$pfer \cite{Bet2020}]\label{Conjecture2}
The existence of square lattice minimizes the lattice energy $E_f(L)$ when $f(r)=e^{-\beta \pi r}-e^{-\alpha \pi r}$ for $|\beta-\alpha|$
bigger than some small positive number.
\end{conjecture}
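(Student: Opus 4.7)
My starting point is to translate the conjecture into the theta-function framework of the paper: for a lattice $L=\sqrt{A}\,(\mathbb{Z}\oplus z\mathbb{Z})$ of covolume $A$ and shape $z\in\mathbb{H}$, the energy $E_f(L)$ with $f(r)=e^{-\beta\pi r}-e^{-\alpha\pi r}$ equals
\begin{equation*}
E_f(L)=\bigl[\theta(\beta A;z)-1\bigr]-\bigl[\theta(\alpha A;z)-1\bigr]=\theta(\beta A;z)-\theta(\alpha A;z).
\end{equation*}
After absorbing $A$ into the parameters, the conjecture reduces to a sharp shape-optimization problem for a difference of two theta functions with a free ratio of arguments, generalizing the main theorem of the paper (in which the ratio is pinned at $2$). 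The claim is then that, once $|\beta-\alpha|$ exceeds some threshold $\delta(\alpha)$, the minimizer switches from the Montgomery hexagonal lattice to the square lattice $z=i$.

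The first step would be to pass to the standard fundamental domain $\mathcal{D}=\{z\in\mathbb{H}:|z|\ge 1,\ 0\le \Re z\le 1/2\}$ by modular invariance of $\theta$ and to establish, following the Montgomery--B\'etermin template employed in the paper, that $\theta(\beta;z)-\theta(\alpha;z)$ is monotone in $\Re z$ at fixed $\Im z$, thereby pushing any minimizer onto $\partial\mathcal{D}$. Next I would conduct a parameter study along $\partial\mathcal{D}$, locating the transition curve
\begin{equation*}
\Gamma=\bigl\{(\alpha,\beta)\,:\,\theta(\beta;z_h)-\theta(\alpha;z_h)=\theta(\beta;z_s)-\theta(\alpha;z_s)\bigr\},\qquad z_h=\tfrac{1+i\sqrt{3}}{2},\ z_s=i,
\end{equation*}
and showing that $z_s$ is strictly preferred past $\Gamma$. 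The Jacobi inversion $\theta(\alpha;z)=\alpha^{-1}\theta(\alpha^{-1};-1/z)$ should let me concentrate on the regime $\beta\gg\alpha$, in which $\theta(\alpha;\cdot)$ degenerates to a near-constant and the optimization becomes a perturbation of a single rescaled theta; the complementary regime $\alpha\gg\beta$ is handled by the same tool.

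The decisive obstacle, and the likely reason the present paper restricts to the ratio-$2$ case, is the absence of an algebraic decomposition of $\theta(\beta;z)$ in terms of $\theta(\alpha;\cdot)$ at a free ratio $\beta/\alpha$. In the ratio-$2$ setting the doubling relation between $\theta(\alpha;z)$ and $\theta(2\alpha;z)$ is what converts the difference into a sum amenable to term-by-term analysis and yields the clean threshold $\beta_c=\sqrt{2}$. Without a surrogate for this tool, ruling out intermediate minimizers on the arc $|z|=1$ between $z_h$ and $z_s$ seems to require a quantitative Poisson-summation estimate controlling the gradient of $\theta(\beta;z)-\theta(\alpha;z)$ on $\partial\mathcal{D}$ uniformly in $(\alpha,\beta)$, with the sharp threshold matching the transition curve $\Gamma$. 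I expect the bulk of the technical work, and the need for a genuinely new idea beyond the methods used for the main theorem, to live in exactly this final comparison step.
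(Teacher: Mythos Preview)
The statement you are addressing is a \emph{conjecture} quoted from another paper, not a theorem of the present one, and the present paper does not prove it. On the contrary, the remark immediately following Theorem~\ref{Th3} records that Theorem~\ref{Th3} furnishes a \emph{negative} answer to Conjecture~\ref{Conjecture2} in dimension two. Concretely, take the potential $f(r)=e^{-\pi a r}-e^{-2\pi a r}$ with $a\ge 1$; in the conjecture's notation this is $\beta=a$, $\alpha=2a$, so $|\beta-\alpha|=a$ can be made arbitrarily large. Theorem~\ref{Th3} (applied with coefficient $1\le\sqrt{2}=\beta_c$) shows that for every such $a$ the minimizer among unit-area lattices is the hexagonal lattice, never the square. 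Thus along this entire one-parameter family the square lattice fails to be the minimizer, and the conjecture as stated is false.

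Your plan is therefore aimed at proving something the paper disproves. The place where your outline must break is the claimed transition to $z_s=i$ once $|\beta-\alpha|$ crosses a threshold curve $\Gamma$: no such transition occurs for the ratio-$2$ family above, because the transversal monotonicity of Section~3 (Theorem~\ref{2Th3}) drives any minimizer onto the line $\Re z=\tfrac12$ rather than onto the arc $|z|=1$, and the vertical-line analysis of Section~4 then pins it at the hexagonal point $\tfrac12+i\tfrac{\sqrt3}{2}$. The technical obstacle you flag at the end---the absence of a doubling-type identity at a free ratio---is real, but it is secondary to the fact that the target conclusion itself is wrong.
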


\begin{open question}[B\'etermin-Petrache, \cite{Bet2019AMP}]\label{Open1}
Question 1.1: If $f(r^2)$ is not a positive superposition of Gaussians, can the triangular(hexagonal)
lattice still be a minimizer of $E_f[L]$ among lattices at any fixed density?

\end{open question}

\begin{open question}[Question 1.8 of B\'etermin-Petrache, \cite{Bet2019AMP}]\label{Open2}
 Is there any non-completely monotone $f$ for which the minimizer of
$E_f[L]$ is the triangular lattice for all $\lambda>0$, among periodic configurations $C$ of
unit density?

\end{open question}

\begin{open question}[Open Problem 1.9 of B\'etermin-Petrache \cite{Bet2019AMP}]\label{OpenA}
 [Stability of crystallization phenomena, with respect to perturbations of $f$] Study and classify natural distances between (or other measures of the size
of perturbations of) interaction kernels $f$ , with respect to which small perturbations
of $f$ can be ensured to preserve the crystallization properties of the kernels, such as
the existence and shape of the global minimum amongst periodic configurations.

\end{open question}

In this paper, we  study  the existence and nonexistence of the minimizer  for the potentials in the
form of difference  of two completely monotone functions. As a consequence, we give affirmative answers to Conjectures \ref{BetConjecture} and \ref{Conjecture2} and answer partially some other open questions listed as above.

Let $\theta (\alpha;z)$ be the theta function defined at (\ref{thetas}). The following is the main result of this paper.

\begin{theorem}\label{Th1} Let $\alpha \geq 1$ and $\beta\in \R$. Consider the minimizing  of difference  of two theta functions with different frequencies, i.e.,
\begin{equation}\label{EFL33} \aligned
\min_{   \mathbb{H} }\Big(\theta (\alpha; z)-\beta\theta (2\alpha; z)\Big).
\endaligned\end{equation}

 There is a critical value $\beta_c=\sqrt2$ (independent of $\alpha$) such that
 \begin{itemize}
   \item if $\beta\leq\beta_c$, the minimizer of the lattice energy functional  is $\frac{1}{2}+i\frac{\sqrt3}{2}$ (up to translation and rotation), which corresponds to the hexagonal lattice;
   \item if $\beta>\beta_c$, the minimizer of the lattice energy functional does not exist.
 \end{itemize}
\end{theorem}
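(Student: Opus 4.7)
My plan is to prove the theorem in three pieces: non-existence for $\beta > \sqrt 2$, reduction of the range $\beta \leq \sqrt 2$ to the critical case $\beta = \sqrt 2$, and the critical case itself, which will contain the bulk of the analytic work.

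For non-existence, I would specialize to $z = iy$ and exploit the cusp asymptotics: writing $\theta(\alpha; iy) = \sum_{m,n \in \mathbb Z} e^{-\pi\alpha(m^2 y + n^2/y)}$ and Poisson summing in $n$ gives $\theta(\alpha; iy) = \sqrt{y/\alpha}\,(1 + o(1))$ as $y \to \infty$, hence $\theta(\alpha; iy) - \beta\theta(2\alpha; iy) \sim \sqrt{y/\alpha}\,(1 - \beta/\sqrt 2) \to -\infty$ whenever $\beta > \sqrt 2$. This also pinpoints $\beta_c = \sqrt 2$ as the ratio of leading cusp asymptotics, explaining its independence of $\alpha$. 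For the reduction step, Montgomery's theorem applied to the Gaussian theta $\theta(2\alpha; \cdot)$ says its minimum on $\mathbb H$ is attained at $z_0 := \tfrac 1 2 + i\tfrac{\sqrt 3}{2}$; the algebraic identity
\[
\theta(\alpha; z) - \beta\,\theta(2\alpha; z) = \bigl[\theta(\alpha; z) - \sqrt 2\,\theta(2\alpha; z)\bigr] + (\sqrt 2 - \beta)\,\theta(2\alpha; z)
\]
then exhibits the subcritical functional as a sum of two pieces each minimized at $z_0$ (the first by the critical-case result, the second because $\sqrt 2 - \beta \geq 0$), so the sum is minimized there as well.

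The core of the argument is the critical case. By $\mathrm{SL}_2(\mathbb Z)$-invariance I restrict to the standard fundamental domain $\mathcal D = \{z \in \mathbb H : |z| \geq 1,\ 0 \leq x \leq 1/2\}$, on which $z_0$ is the unique candidate. I would first verify directly from the series that $F_\alpha(z_0) < 0$ for $F_\alpha(z) := \theta(\alpha; z) - \sqrt 2\,\theta(2\alpha; z)$, then split $\mathcal D$ into a cusp piece $\{y \geq Y_\alpha\}$ and a bulk piece $\{\sqrt 3/2 \leq y \leq Y_\alpha\}$. On the cusp, $F_\alpha(z) \to 0^-$, so a uniform term-by-term series bound yields $F_\alpha(z) > F_\alpha(z_0)$ once $Y_\alpha$ is chosen large enough. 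On the bulk, the plan is to reduce the 2D problem to a 1D minimization along $x = 1/2$ by using the Fourier expansion
\[
\theta(\alpha; x + iy) = \sqrt{y/\alpha}\sum_{m,k \in \mathbb Z} e^{-\pi y(\alpha m^2 + k^2/\alpha)}\,\cos(2\pi k m x)
\]
obtained from Poisson summation in $n$; the induced expansion of $F_\alpha$ has its nonzero-frequency coefficients dominated by the $j = 1$ harmonic (whose sign is controlled by the strict inequality $\alpha + 1/\alpha < 2\alpha + 1/(2\alpha)$ for $\alpha \geq 1$), which forces the $x$-minimum at $x = 1/2$ for every fixed $y$. It then remains to show that $y \mapsto F_\alpha(\tfrac 1 2 + iy)$ is monotone nondecreasing on $[\sqrt 3/2, Y_\alpha]$.

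The monotonicity-in-$y$ step is where I expect the principal difficulty. Differentiating the Poisson-dual series termwise, the $\sqrt 2$-weighted difference of $\partial_y\theta(\alpha; \cdot)$ and $\partial_y\theta(2\alpha; \cdot)$ should collapse, after collecting like exponentials $e^{-\pi y(\alpha m^2 + k^2/\alpha)}$ and $e^{-\pi y(2\alpha m^2 + k^2/(2\alpha))}$, into a sum whose sign can be controlled by pairing $(m, k) \leftrightarrow (k, m)$ and invoking the hypothesis $\alpha \geq 1$ to secure the strict ordering of exponents throughout the relevant range. Combining these three parts gives the full statement of the theorem.
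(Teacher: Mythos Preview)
Your overall architecture---non-existence from the cusp asymptotics, reduction from $\beta\le\sqrt2$ to $\beta=\sqrt2$ via the splitting $F_\alpha^{(\beta)}=F_\alpha^{(\sqrt2)}+(\sqrt2-\beta)\theta(2\alpha;\cdot)$ and Montgomery, and then in the critical case first reducing to $x=\tfrac12$ by showing the leading Fourier harmonic dominates, then proving monotonicity in $y$ along $x=\tfrac12$---is exactly the skeleton of the paper's proof. The $x$--reduction in the paper is carried out through the same Poisson-dual expansion you wrote, but with explicit upper/lower bounds on $\partial_Y\vartheta(X;Y)$ and a case split $x\in[0,\tfrac13]$ versus $x\in[\tfrac13,\tfrac12]$; your heuristic ``$\alpha+1/\alpha<2\alpha+1/(2\alpha)$'' correctly identifies the ordering of the dominant exponents, and the paper's Section~3 is precisely the work of making that domination rigorous against the full tail.

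The genuine gap is in the $y$--monotonicity step, and it is not just that the argument is sketchy: the proposed mechanism cannot work as stated. Since $z_0=\tfrac12+i\tfrac{\sqrt3}{2}$ is a critical point of \emph{both} $\theta(\alpha;\cdot)$ and $\theta(2\alpha;\cdot)$, one has $\partial_y F_\alpha(\tfrac12+iy)\big|_{y=\sqrt3/2}=0$. Any argument that extracts $\partial_y F_\alpha>0$ from a termwise or pairing inequality with a definite sign (such as the $(m,k)\leftrightarrow(k,m)$ swap you suggest) would give strict positivity at the endpoint too, which is false; conversely, near $y=\tfrac{\sqrt3}{2}$ the leading-order balance is exactly zero and no first-order comparison of exponents decides the sign. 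The paper resolves this by passing to second order: it shows that the operator $L:=\partial_y^2+\tfrac{2}{y}\partial_y$ satisfies $L\,F_\alpha(\tfrac12+iy)>0$ on a bulk interval $[\tfrac{\sqrt3}{2},1.8\,\alpha]$, and then uses the identity $\partial_y\bigl(y^{2}\,\partial_y g\bigr)=y^{2}Lg$ together with $\partial_y F_\alpha\big|_{\sqrt3/2}=0$ to conclude $\partial_y F_\alpha>0$ there. On the overlapping region $y\ge 1.15\,\alpha$ a direct first-order estimate (closer in spirit to what you propose) does work and handles the cusp. Without recognizing that a second-order argument is forced near $y=\tfrac{\sqrt3}{2}$, your plan is incomplete at precisely the point you flagged as the principal difficulty.
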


For many physical applications, we state an equivalent form of Theorem \ref{Th1} in the following Theorem \ref{Th2}. 

\begin{theorem}\label{Th2}  Let $\gamma\in(0,1]$ and $\beta \in \R$. Consider the minimizing problem
\begin{equation}\aligned\nonumber
\min_{   \mathbb{H} }\Big(\theta (\gamma; z)-\beta\theta (\frac{1}{2}\gamma; z)\Big).
\endaligned\end{equation}
There is a critical value $\beta_{s}=\frac{\sqrt2}{2}$ (independent of $\alpha$) such that if $\beta\leq\beta_s$, the minimizer of the lattice energy functional  is $\frac{1}{2}+i\frac{\sqrt3}{2}$ (up to translation and rotation), and if  $\beta>\beta_s$, the minimizer of the lattice energy functional does not exist.
\end{theorem}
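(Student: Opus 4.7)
The plan is to derive Theorem~\ref{Th2} directly from Theorem~\ref{Th1} by exploiting the Jacobi-type functional equation
\begin{equation*}
\theta(\alpha;z) \;=\; \frac{1}{\alpha}\,\theta\!\left(\tfrac{1}{\alpha};z\right), \qquad \alpha>0,\ z\in\mathbb{H}.
\end{equation*}
I would first establish this identity by applying the Poisson summation formula to the Gaussian $x\mapsto e^{-\alpha\pi|x|^2}$ on the unit-covolume lattice $L(z)/\sqrt{y}$: its Fourier transform on $\mathbb{R}^2$ is $\alpha^{-1}e^{-\pi|k|^2/\alpha}$, and the dual lattice has unit area with shape $-1/z$. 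The elementary modular invariance $\theta(\cdot;-1/z)=\theta(\cdot;z)$, obtained by a change of index $(m,n)\mapsto(-n,m)$ in \eqref{thetas}, then collapses the right-hand side into $\alpha^{-1}\theta(1/\alpha;z)$.

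Substituting the functional equation into both terms of the energy in Theorem~\ref{Th2} yields
\begin{equation*}
\theta(\gamma; z) - \beta\,\theta\!\left(\tfrac{\gamma}{2};z\right) \;=\; \frac{1}{\gamma}\Bigl[\theta(1/\gamma; z) - 2\beta\,\theta(2/\gamma;z)\Bigr].
\end{equation*}
Setting $\alpha=1/\gamma$ turns the hypothesis $\gamma\in(0,1]$ into $\alpha\geq 1$, precisely the range of Theorem~\ref{Th1}, and the bracket is exactly $\theta(\alpha;z)-2\beta\,\theta(2\alpha;z)$, i.e.\ the energy of Theorem~\ref{Th1} with the coupling constant $\beta$ replaced by the effective coupling $2\beta$. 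Since the prefactor $1/\gamma>0$ is independent of $z$, both the location of the minimum and the existence/nonexistence of a minimizer are left invariant by the reduction.

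It then remains only to translate the thresholds. Theorem~\ref{Th1} gives critical value $\beta_c=\sqrt{2}$ acting on the effective coupling $2\beta$, which produces the threshold $\beta=\beta_c/2=\sqrt{2}/2=\beta_s$ in Theorem~\ref{Th2}, independent of $\gamma$ (as claimed). For $\beta\leq\beta_s$, Theorem~\ref{Th1} identifies $\tfrac{1}{2}+i\tfrac{\sqrt{3}}{2}$ as the minimizer of the bracket, and hence as the minimizer of the original energy; for $\beta>\beta_s$, Theorem~\ref{Th1} furnishes nonexistence, and the same conclusion transfers. Since the functional equation is a classical consequence of Poisson summation and the rest is routine bookkeeping, no substantive obstacle appears: the whole analytic burden is already absorbed into the proof of Theorem~\ref{Th1}.
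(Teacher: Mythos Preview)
Your proposal is correct and follows essentially the same approach as the paper: the paper's own proof is the one-line observation that the Fourier/Poisson identity $\theta(1/\alpha;z)=\alpha\,\theta(\alpha;z)$ makes Theorem~\ref{Th2} equivalent to Theorem~\ref{Th1}, and you have simply written out that equivalence in full, including the explicit bookkeeping $2\beta\leftrightarrow\beta_c$ that yields $\beta_s=\sqrt{2}/2$.
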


In the following, we discuss applications to two special potentials: the exponential potential and Yukawa potential. For exponential potentials we have
\begin{theorem}\label{Th3} Let $E_f[L]$ be defined as
$$
E_f(L):=\sum_{\mathbb{P}\in L\backslash\{0\}} f(|\mathbb{P}|^2)
$$
with the area of two dimensional lattice $L$ is normalized to 1.
Consider the potential $f$ has the following form
\begin{equation}\aligned\nonumber
f_\alpha(r):&=e^{-\pi\alpha\cdot r}-\beta e^{-2\pi\alpha\cdot r}, \;\; \alpha\geq1,\\
g_\gamma(r):&=e^{-\pi\gamma\cdot r}-\beta e^{-\frac{1}{2}\pi\gamma\cdot r},\;\;\gamma\in(0,1).
\endaligned\end{equation}
\begin{itemize}
  \item There exists $\beta_c=\sqrt2$ (independent of $\alpha$) such that
\begin{itemize}
  \item if $\beta\leq\beta_c$, the minimizer of $E_{f_\alpha}[L]$ exists and is always hexagonal lattice;
  \item if $\beta>\beta_c$, the minimizer of $E_{f_\alpha}[L]$ does not exists.
\end{itemize}
  \item There exists $\beta_s=\frac{\sqrt2}{2}$ (independent of $\gamma$) such that
\begin{itemize}
  \item if $\beta\leq\beta_s$, the minimizer of $E_{g_\gamma}[L]$ exists and is always hexagonal lattice;
  \item if $\beta>\beta_s$, the minimizer of $E_{g_\gamma}[L]$ does not exists.
\end{itemize}
\end{itemize}

\end{theorem}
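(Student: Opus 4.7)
The plan is to reduce Theorem \ref{Th3} directly to Theorems \ref{Th1} and \ref{Th2}: once the physical lattice sum is identified with a difference of theta functions up to an additive constant, the dichotomies at $\beta_c=\sqrt{2}$ and $\beta_s=\sqrt{2}/2$ transfer verbatim.

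First I would parametrize the unit-area lattices. Any unit-area planar lattice can be written, up to isometry, as $L=\frac{1}{\sqrt{y}}(\mathbb{Z}\oplus z\mathbb{Z})$ with $z=x+iy\in\mathbb{H}$, so that a typical nonzero point $\mathbb{P}=\frac{1}{\sqrt{y}}(mz+n)$ has squared length $|\mathbb{P}|^2=|mz+n|^2/y$. Substituting into the energy and splitting off the $(m,n)=(0,0)$ term of each theta series gives
\begin{equation*}
E_{f_\alpha}[L]=\sum_{(m,n)\ne(0,0)}\!\!\Big(e^{-\pi\alpha|mz+n|^2/y}-\beta e^{-2\pi\alpha|mz+n|^2/y}\Big)=\theta(\alpha;z)-\beta\,\theta(2\alpha;z)-(1-\beta),
\end{equation*}
and analogously
\begin{equation*}
E_{g_\gamma}[L]=\theta(\gamma;z)-\beta\,\theta(\tfrac{1}{2}\gamma;z)-(1-\beta).
\end{equation*}

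Next, since the additive constant $-(1-\beta)$ is independent of the shape parameter $z$, the problem of minimizing $E_{f_\alpha}[L]$ (respectively $E_{g_\gamma}[L]$) over unit-area lattices is equivalent to minimizing the corresponding difference of theta functions over $\mathbb{H}$. Invoking Theorem \ref{Th1} with $\alpha\ge 1$ immediately yields the claimed dichotomy at $\beta_c=\sqrt{2}$ for $f_\alpha$: the hexagonal lattice $z=\tfrac{1}{2}+i\tfrac{\sqrt{3}}{2}$ is the unique minimizer (up to the natural modular action) when $\beta\leq\sqrt{2}$, and no minimizer exists when $\beta>\sqrt{2}$. Theorem \ref{Th2} applied to $g_\gamma$ with $\gamma\in(0,1]$ gives the parallel statement with critical value $\beta_s=\sqrt{2}/2$.

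There is no genuine obstacle beyond a careful bookkeeping of the unit-area normalization: the prefactor $\pi/y$ built into the definition \eqref{thetas} of $\theta(\alpha;z)$ is precisely what is needed so that the Gaussian evaluated at the Euclidean squared distance on the unit-area lattice matches the summand in $\theta(\alpha;z)$. Once this identification is verified, Theorem \ref{Th3} follows at once from the two preceding theorems, with the constant $\beta-1$ absorbed into the energy and playing no role in locating the minimizer or characterizing its existence.
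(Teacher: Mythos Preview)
Your proposal is correct and follows essentially the same approach as the paper: the paper simply states that Theorems \ref{Th3}--\ref{Th4} are easy consequences of Theorems \ref{Th1}--\ref{Th2}, and you have carried out exactly this reduction with the bookkeeping (the unit-area parametrization and the harmless additive constant $-(1-\beta)$) made explicit.
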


\begin{remark}[{\bf An negative answer to Conjecture \ref{BetConjecture}}]

If one regards
\begin{equation}\aligned\nonumber
h_{\alpha_1,\beta}(r):&=e^{-\pi\alpha_1\cdot r}-\beta e^{-\frac{1}{2}\pi\alpha_1\cdot r}, \beta>0,\\
L_{\alpha_2}:&=\alpha_2\cdot L, \alpha_2>0, \alpha:=\alpha_1\cdot\alpha_2<1,
\endaligned\end{equation}
then $h_{\alpha_1,\beta}(r)$ is the difference of two completely monotone functions and is a one well potential
 and
$$
E_{h_{\alpha_1,\beta}}[L_{\alpha_2}]=E_{g_\alpha}[L].
$$
However $E_{h_{\alpha_1,\beta}}[L_\alpha]$ admits minimizer always at hexagonal lattice for any $\beta\in(0,\beta_s]$ as the lattice density $\alpha_2$ changes in $(0,\frac{1}{\alpha_1})$$($one can choose $\alpha_1$ to close $0$ then $\frac{1}{\alpha_1}\rightarrow\infty$$)$. This disproves the hexagonal-rhombic-square-rectangular lattice phase transitions (see e.g.,\cite{Betermin2021JPA,Bet2020}) and gives an negative answer to Conjecture \ref{BetConjecture} and open question 1.16 of B\'etermin-Petrache \cite{Bet2019AMP}. More general potentials of difference of two completely monotone type are shown in Theorem \ref{Th4} via the Laplace transform.

\end{remark}

\begin{remark}[{\bf A negative answer to Conjecture \ref{Conjecture2} on dimension two}] As shown in the Theorem \ref{Th3}, there is no square lattice being minimizer for potential of such form.

\end{remark}
\begin{remark}[{\bf A partial answer to Open Question \ref{Open1}}] Note that
\begin{equation}\aligned\nonumber
h_\beta(r^2)=e^{-\pi\cdot r^2}-\beta e^{-\frac{1}{2}\pi\cdot r^2}, \;\; \beta>0,
\endaligned\end{equation}
is the difference  of two Gaussians $($hence  not a positive superposition of Gaussians$)$, the hexagonal lattice is always the minimizer for any fixed density $\alpha\geq1$. This partially answers open question \ref{Open1}.

\end{remark}

\begin{remark}[{\bf Partial answer on Open Question \ref{OpenA}}] We discuss two aspects: the stability and instability.
\begin{itemize}
  \item $(${\bf Instability under small perturbation: critical parameter}$)$. Let
  \begin{equation}\aligned\nonumber
f_{0,\alpha}(r):&=e^{-\pi\alpha\cdot r}-\sqrt2 e^{-2\pi\alpha\cdot r}, \;\; \alpha\geq1,\\
g_{0,\gamma}(r):&=e^{-\pi\gamma\cdot r}-\frac{\sqrt2}{2} e^{-\frac{1}{2}\pi\gamma\cdot r}\;\; \gamma\in(0,1).
\endaligned\end{equation}
A small perturbation of  $f_{0,\alpha}(r)$ and $g_{0,\gamma}(r)$ from left hand side by $\varepsilon e^{-2 c\cdot r}$ will lead to the minimizer of $E_f[L]$ does not exist. Namely, let
  \begin{equation}\aligned\nonumber
f_{0,\alpha,\varepsilon}(r):&=e^{-\pi\alpha\cdot r}-\sqrt2 e^{-2\pi\alpha\cdot r}-\varepsilon e^{-2 c\cdot r}, \;\; \alpha\geq1\\
g_{0,\gamma,\varepsilon}(r):&=e^{-\pi\gamma\cdot r}-\frac{\sqrt2}{2} e^{-\frac{1}{2}\pi\gamma\cdot r}-\varepsilon e^{-2 c\cdot r}\;\; \gamma\in(0,1),
\endaligned\end{equation}
for $\forall c>0, \forall\varepsilon>0$ be a small perturbation of $f_{0,\alpha}(r), g_{0,\gamma}(r)$, the minimizers of $E_{f_{0,\alpha,\varepsilon}}[L]$ and $E_{g_{0,\gamma,\varepsilon}}[\Lambda]$ do not exist and the minimizers of $E_{f_{0,\alpha}}[L]$ and $E_{g_{0,\gamma}}[L]$ are both hexagonal lattice. In this sense, the minimizers of $E_{f_{0,\alpha}}[L]$ are instable under small perturbation as above.

  \item $(${\bf Stability under small perturbation: subcritical parameter}$)$.
Assume $\beta<\frac{\sqrt2}{2}$. Let $|\varepsilon|\leq\frac{\sqrt2}{2}-\beta$ and
  \begin{equation}\aligned\nonumber
g_{\alpha,\varepsilon}(r):&=e^{-\pi\alpha\cdot r}-\beta e^{-2\pi\alpha\cdot r}\pm\varepsilon e^{-2\pi\alpha\cdot r}, \;\; \alpha\geq1,
\endaligned\end{equation}
be a small perturbation of $g_{\alpha}(r)$.
Then the minimizer of $E_{g_{\alpha,\varepsilon}(r)}[L]$ is still the hexagonal lattice, i.e., the minimizer of $E_{g_{\alpha}(r)}[L]$ is stable under small perturbation as above.

\end{itemize}
\end{remark}

\begin{remark} The numerical study of the potential
 \begin{equation}\aligned\nonumber
f_\alpha(r):&=e^{-\pi\alpha\cdot r}-\beta e^{-2\pi\alpha\cdot r}
\endaligned\end{equation}
is performed as an important cases in B\'etermin-Faulhuber-Kn$\ddot{u}$pfer \cite{Bet2020}, see Figures 3, 8 and 10 of \cite{Bet2020}.

\end{remark}

Using the free parameter $\alpha$ of Theorem \ref{Th3}, we proceed Theorem \ref{Th2} and \ref{Th3} to a general form by the Laplace transform (inspired by B\'etermin \cite{Bet2016}). There is a difference between Theorem \ref{Th4} and Theorems \ref{Th1}, \ref{Th2}, \ref{Th3}. In the former, one does not know the parameter is optimal or not, and in the latter, the parameters are optimal as stated in the Theorems.

\begin{theorem}\label{Th4}
Let $E_f[L]$ be defined as
$$
E_f(L):=\sum_{\mathbb{P}\in L \backslash\{0\}} f(|\mathbb{P}|^2)
$$
with the area of two dimensional lattice $L$ is normalized to 1.
Consider the potential $f_{\alpha,P}, g_{\alpha,P}$ have the following form
\begin{equation}\aligned\label{FFGG}
f_{\alpha,P}(r):&=\int_1^\infty \Big(\big(e^{-\pi\alpha x\cdot r}-\beta e^{-2\pi\alpha x\cdot r}\big)\cdot P(x)\Big)dx, \;\; \alpha\geq1,\\
g_{\gamma,P}(r):&=\int_0^1 \Big(\big(e^{-\pi\gamma x\cdot r}-\beta e^{-\frac{1}{2}\pi\gamma x\cdot r}\big)\cdot P(x)\Big)dx, \;\; \gamma\in(0,1),
\endaligned\end{equation}
where the $P(x)$ is any real function(not necessarily continuous) such that $f_{\alpha,P}(r), g_{\alpha,P}(r)$ are finite and
$$P(x)\geq0.$$

Then there exists $\beta_c=\sqrt2, \beta_s=\frac{\sqrt2}{2}$ and nonnegative function $P$ such that
\begin{itemize}
  \item if $\beta\leq\beta_c$, the minimizer of $E_{f_{\alpha,P}}[\Lambda]$ exists and is always hexagonal lattice.
  \item if $\beta\leq\beta_s$, the minimizer of $E_{g_{\gamma,P}}[\Lambda]$ exists and is always hexagonal lattice.
\end{itemize}
\end{theorem}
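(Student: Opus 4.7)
The plan is to reduce Theorem~\ref{Th4} directly to Theorems~\ref{Th1} and \ref{Th2} via a Fubini interchange, exploiting that by construction $f_{\alpha,P}$ and $g_{\gamma,P}$ are $P$-weighted integrals of the difference-of-exponentials building blocks already handled. For the area-$1$ normalization of $L=\mathbb{Z}\oplus z\mathbb{Z}$, one has $\sum_{\mathbb{P}\in L\setminus\{0\}}e^{-\pi\alpha x|\mathbb{P}|^2}=\theta(\alpha x;z)-1$, so inserting the integral representations \eqref{FFGG} into $E_{f_{\alpha,P}}[L]$ and interchanging sum and integral yields
\[
E_{f_{\alpha,P}}[L] \;=\; \int_1^\infty\bigl(\theta(\alpha x;z)-\beta\,\theta(2\alpha x;z)\bigr)P(x)\,dx \;+\; C_1,
\]
and similarly
\[
E_{g_{\gamma,P}}[L] \;=\; \int_0^1\bigl(\theta(\gamma x;z)-\beta\,\theta(\tfrac{1}{2}\gamma x;z)\bigr)P(x)\,dx \;+\; C_2,
\]
where $C_1,C_2$ are constants independent of $z$. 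The interchange is legitimate because each Gaussian piece $\sum_{\mathbb{P}\neq 0}\int e^{-c\pi x|\mathbb{P}|^2}P(x)\,dx$ has a non-negative integrand and so equals its iterated Tonelli value; the hypothesis that $f_{\alpha,P}$ and $g_{\gamma,P}$ are finite, combined with the elementary monotonicity $\theta(2\alpha x;z)\le\theta(\alpha x;z)$, ensures both iterated integrals are finite, so their (signed) difference is well defined for any real $\beta$.

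With this representation, the conclusion becomes pointwise. For the $f_{\alpha,P}$ assertion, $\alpha\ge 1$ and $x\ge 1$ force $\alpha x\ge 1$, so Theorem~\ref{Th1} applies to the integrand for every $x$ in the integration range: when $\beta\le\beta_c=\sqrt 2$, the hexagonal $z_0=\tfrac{1}{2}+i\tfrac{\sqrt 3}{2}$ is a minimizer of $\theta(\alpha x;z)-\beta\theta(2\alpha x;z)$ over $\mathbb{H}$. Since $P(x)\ge 0$, a common pointwise minimizer of the integrand is automatically a minimizer of the integral, and therefore $z_0$ minimizes $E_{f_{\alpha,P}}[L]$. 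For the $g_{\gamma,P}$ assertion, $\gamma\in(0,1)$ and $x\in(0,1)$ give $\gamma x\in(0,1]$ and $\tfrac{1}{2}\gamma x\in(0,\tfrac{1}{2}]$, so Theorem~\ref{Th2} applies slice by slice with threshold $\beta_s=\tfrac{\sqrt 2}{2}$, and the same $P\ge 0$ pointwise-minimality argument delivers $z_0$.

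The main structural point, rather than an analytical obstacle, is the alignment of the integration domains in \eqref{FFGG} with the admissible parameter ranges of Theorems~\ref{Th1} and \ref{Th2}: integrating over $[1,\infty)$ keeps $\alpha x$ inside the sharp regime $[1,\infty)$ of Theorem~\ref{Th1}, and integrating over $(0,1]$ keeps $\gamma x$ inside $(0,1]$ of Theorem~\ref{Th2}. This alignment is also precisely why, as stressed in the statement, the thresholds $\beta_c,\beta_s$ are not claimed to be optimal for \eqref{FFGG} in this generality: a particular weight $P$ might average the integrand so that the hexagonal lattice continues to minimize the full integral even after pointwise minimality of individual slices has failed, and establishing such sharpness would require input beyond the building-block theorems.
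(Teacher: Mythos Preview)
Your proposal is correct and follows essentially the same approach as the paper, which disposes of Theorem~\ref{Th4} in a single sentence (``These two theorems are easy consequences of Theorems~\ref{Th1}--\ref{Th2}''). You have supplied precisely the details the paper omits: the Fubini/Tonelli interchange expressing $E_{f_{\alpha,P}}$ and $E_{g_{\gamma,P}}$ as nonnegatively weighted integrals of the building blocks $\theta(\alpha x;z)-\beta\theta(2\alpha x;z)$ and $\theta(\gamma x;z)-\beta\theta(\tfrac12\gamma x;z)$, and the observation that the integration ranges $[1,\infty)$ and $(0,1)$ are chosen exactly so that $\alpha x\ge 1$ and $\gamma x\in(0,1]$, keeping every slice inside the regimes of Theorems~\ref{Th1} and~\ref{Th2}.

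One small caution: your constants $C_1,C_2$ equal $(\beta-1)\int P(x)\,dx$ over the respective domains, and the hypotheses do not guarantee $\int P<\infty$. This is harmless for the argument, since you can bypass the splitting entirely and compare $E_{f_{\alpha,P}}[L](z)-E_{f_{\alpha,P}}[L](z_0)$ directly as $\int_1^\infty\bigl[(\theta(\alpha x;z)-\beta\theta(2\alpha x;z))-(\theta(\alpha x;z_0)-\beta\theta(2\alpha x;z_0))\bigr]P(x)\,dx\ge 0$, which needs only the pointwise minimality from Theorem~\ref{Th1} and $P\ge 0$.
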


\begin{remark}[{\bf Partial answer on open question 1.6 of B\'etermin-Petrache \cite{Bet2019AMP}}] Theorem \ref{Th4} partially answers open question 1.6 of B\'etermin-Petrache \cite{Bet2019AMP} on minimizers of difference of two Laplace transform of the potentials.

\end{remark}

\begin{remark}[Connection to G-type potentials] The potential introduced by \eqref{FFGG} are a subclass of G-type potentials $($see Chapter 10 of monograph \cite{B10}$)$. Here we show that under these potentials the minimizer of the lattice energy are hexagonal lattice under suitable competing strength $\beta$.

\end{remark}

\begin{remark}[\bf Partial answer on Open Question \ref{Open2}] Since here $f_{\alpha,P}(r),(\alpha\geq1)$ is the difference of two completely monotone functions, and hence not completely monotone. The minimizer of $E_{f_{\alpha,P}(r)}[\lambda\cdot L]$ is hexagonal lattice for all $\lambda\geq1$. This gives partial answer on Open question \ref{Open2}.

\end{remark}

\begin{remark}[{\bf More potentials to answer Conjecture \ref{BetConjecture}}] Theorem \ref{Th4} provides general potentials to a negative answer to Conjecture \ref{BetConjecture}.
\end{remark}

A particular  application of  Theorem \ref{Th4} is the classical  Yukawa potential case.
\begin{corollary}[{\bf Yukawa potential$\{\cong P(x)\equiv1\}$ of Theorem \ref{Th4}}]\label{Coro1}

Let $E_f[L]$ be defined as
$$
E_f(L):=\sum_{\mathbb{P}\in L\backslash\{0\}} f(|\mathbb{P}|^2)
$$
with the area of two dimensional lattice $L$ is normalized to 1.
Consider the potential $f_{1,\alpha}, g_{1,\alpha}$ have the following form
\begin{equation}\aligned\nonumber
f_{1,\alpha}(r):&=\frac{e^{-\pi\alpha r}}{r}-\beta\frac{e^{-2\pi\alpha r}}{2r}, \;\; \alpha\geq1.
\endaligned\end{equation}
Then there exists $\beta_c=\sqrt2$ independent of parameter $\alpha$ such that
\begin{itemize}
  \item if $\beta\leq\beta_c$, the minimizer of $E_{f_{1,\alpha}}[L]$ exists and is always hexagonal lattice.
\end{itemize}
\end{corollary}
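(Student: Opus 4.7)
The plan is to recognize the Yukawa-type potential $f_{1,\alpha}$ as a special case of the family $f_{\alpha,P}$ appearing in Theorem~\ref{Th4}, corresponding to the uniform choice $P(x)\equiv 1$, and then simply invoke that theorem. The weight $P\equiv 1$ is nonnegative, so the only nontrivial point to check is that the two defining Laplace-type integrals converge and reproduce the Yukawa shape.

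I would compute the two elementary integrals
\begin{equation*}
\int_1^\infty e^{-\pi\alpha x\, r}\,dx = \frac{e^{-\pi\alpha r}}{\pi\alpha\, r},\qquad
\int_1^\infty e^{-2\pi\alpha x\, r}\,dx = \frac{e^{-2\pi\alpha r}}{2\pi\alpha\, r},
\end{equation*}
which are convergent for every $r>0$ and for every $\alpha\ge 1$. Substituting into the first line of \eqref{FFGG} with $P\equiv 1$ then yields
\begin{equation*}
f_{\alpha,1}(r)=\frac{1}{\pi\alpha}\left(\frac{e^{-\pi\alpha r}}{r}-\beta\,\frac{e^{-2\pi\alpha r}}{2r}\right)=\frac{1}{\pi\alpha}\, f_{1,\alpha}(r).
\end{equation*}
Because the prefactor $\frac{1}{\pi\alpha}$ is a positive constant independent of the lattice $L$, the energies satisfy $E_{f_{1,\alpha}}[L]=\pi\alpha\, E_{f_{\alpha,1}}[L]$, so they share exactly the same set of minimizers in $\mathbb{H}$.

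To conclude, I would invoke Theorem~\ref{Th4} with $P(x)\equiv 1$: for $\beta\le\beta_c=\sqrt{2}$ the minimizer of $E_{f_{\alpha,1}}[L]$ exists and is the hexagonal lattice, and the critical constant $\beta_c$ is independent of $\alpha$. Transferring this conclusion through the positive proportionality above gives the corollary. There is essentially no real obstacle in this argument, since all the analytic content---the sharp threshold $\sqrt 2$, the independence from $\alpha$, the reduction to the theta-difference problem of Theorem~\ref{Th1}---is already absorbed into Theorem~\ref{Th4}; the only task left to this corollary is to verify that the Yukawa profile $\frac{e^{-\pi\alpha r}}{r}$ is itself the Laplace transform, on the ray $x\in[1,\infty)$, of the exponential kernel $e^{-\pi\alpha x r}$.
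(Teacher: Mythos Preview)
Your proposal is correct and is exactly the approach the paper intends: the very title of the corollary flags that it is the instance $P(x)\equiv 1$ of Theorem~\ref{Th4}, and the paper offers no further argument beyond that identification. Your explicit computation of the two Laplace integrals and the observation that the resulting positive prefactor $\frac{1}{\pi\alpha}$ leaves the set of minimizers unchanged are precisely the (routine) verifications the paper leaves implicit.
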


\begin{remark} The rigorous results on Yukawa potential of the minimizer of the crystal energy $E_f[L]$, as far as we know, is initiated by B\'etermin \cite{Bet2016}. Here Corollary \ref{Coro1} improves the result in \cite{Bet2016} on some aspects. Note that we provide an effective way to prove the crystallization of hexagonal lattice under Yukawa potential.
\end{remark}

\begin{remark} In using the results of Corollary \ref{Coro1} and combining the method of B\'etermin \cite{Bet2016}, one can obtain more general results on minimization results under Yukawa potential.
\end{remark}

Theorem \ref{Th1} can be extended as follows by iteration.
\begin{theorem}\label{ThA} Consider the minimizing problem of difference of two theta functions
\begin{equation}\aligned\nonumber
\min_{   \mathbb{H} }\Big(\theta (\alpha; z)-\beta\theta (2^k\alpha; z)\Big),\;\;\hbox{for any}\;\alpha\geq1,k\geq0, k\in\mathbb{Z},\;\beta\in(-\infty,\infty).
\endaligned\end{equation}
There is a critical value $\beta_{A}=\sqrt{2^k}$ independent of $\alpha$ such that
 \begin{itemize}
   \item if $\beta\leq\beta_A$, the minimizer of the lattice energy functional  is $\frac{1}{2}+i\frac{\sqrt3}{2}$ up to translation and rotation, this minimizer corresponds to $\Lambda$ is the hexagonal lattice;
   \item if $\beta>\beta_A$, the minimizer of the lattice energy functional does not exist.
 \end{itemize}
\end{theorem}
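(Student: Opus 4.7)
The plan is to reduce Theorem \ref{ThA} to Theorem \ref{Th1} via a telescoping identity in the parameter $\alpha$, and to handle the supercritical regime by a degeneration argument. First I would observe the identity
\begin{equation}\nonumber
\theta(\alpha;z) - \sqrt{2^k}\,\theta(2^k\alpha;z) = \sum_{j=0}^{k-1} (\sqrt{2})^{j}\,\Bigl(\theta(2^j\alpha;z) - \sqrt{2}\,\theta(2^{j+1}\alpha;z)\Bigr),
\end{equation}
which telescopes because the coefficient of $\theta(2^j\alpha;z)$ on the right-hand side is $(\sqrt{2})^j - \sqrt{2}\cdot(\sqrt{2})^{j-1}=0$ for each $1 \leq j \leq k-1$. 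Since $2^j\alpha \geq \alpha \geq 1$ for every $j \geq 0$, Theorem \ref{Th1} applied with parameter $2^j\alpha$ shows that each summand attains its minimum on $\mathbb{H}$ at the hexagonal point $z_0 = \tfrac{1}{2}+i\tfrac{\sqrt{3}}{2}$. Because the weights $(\sqrt{2})^j$ are positive, the full sum is minimized at the same $z_0$, which settles the boundary case $\beta = \sqrt{2^k}=:\beta_A$.

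For $\beta < \beta_A$ I would split
\begin{equation}\nonumber
\theta(\alpha;z) - \beta\,\theta(2^k\alpha;z) = \bigl[\theta(\alpha;z) - \sqrt{2^k}\,\theta(2^k\alpha;z)\bigr] + (\sqrt{2^k}-\beta)\,\theta(2^k\alpha;z);
\end{equation}
the bracketed piece is minimized at $z_0$ by the previous step, while the remaining positive multiple of a single theta function is minimized at $z_0$ by Montgomery's theorem \cite{Mon1988}, so the hexagonal lattice minimizes the sum. For $\beta > \beta_A$ I would probe the functional along $z = iy$ as $y\to\infty$; isolating the $m=0$ strip in the Gaussian sum and applying Poisson summation yields $\theta(\alpha;iy) = \sqrt{y/\alpha}\,(1+o(1))$, hence
\begin{equation}\nonumber
\theta(\alpha;iy) - \beta\,\theta(2^k\alpha;iy) = \sqrt{y/\alpha}\,\Bigl(1-\frac{\beta}{\sqrt{2^k}}\Bigr)(1+o(1)) \longrightarrow -\infty,
\end{equation}
so the functional is unbounded below and no minimizer can exist.

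The main obstacle here is algebraic rather than analytic: one must find a decomposition of $\theta(\alpha;z) - \beta_A\,\theta(2^k\alpha;z)$ into a combination, with nonnegative coefficients, of pieces to which Theorem \ref{Th1} applies directly, while keeping every parameter $2^j\alpha$ in the admissible range $[1,\infty)$. Once the geometric weights $(\sqrt{2})^j$ are uncovered the three regimes line up immediately, and the value $\beta_A = \sqrt{2^k}$ is seen to be sharp from the cusp asymptotics above. I would finally record that the proof simultaneously shows, for $\beta \leq \beta_A$, uniqueness of the minimizer up to the modular action preserving the hexagonal lattice, inherited from the analogous uniqueness statement in Theorem \ref{Th1}.
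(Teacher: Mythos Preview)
Your proposal is correct and follows essentially the same approach as the paper: the telescoping decomposition $\theta(\alpha;z)-\beta\theta(2^k\alpha;z)=((\sqrt2)^k-\beta)\theta(2^k\alpha;z)+\sum_{j=0}^{k-1}(\sqrt2)^j\bigl(\theta(2^j\alpha;z)-\sqrt2\,\theta(2^{j+1}\alpha;z)\bigr)$ with Theorem~\ref{Th1} applied to each summand, and the cusp asymptotics $\theta(\alpha;z)\sim\sqrt{y/\alpha}$ for the nonexistence part. The only cosmetic difference is that you split off the boundary case $\beta=\beta_A$ first and invoke Montgomery explicitly for the single-theta term, whereas the paper writes the full decomposition in one line and appeals to Theorem~\ref{Th1} (which subsumes Montgomery at $\beta=0$).
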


The paper is organized as follows: in Section 2, we state some basic preliminaries about the functional $ \theta (\alpha;z)-\beta \theta (2\alpha;z)$.
In Section 3, we prove that the minimization problem on the fundamental region (see \eqref{Fd3} and figure \ref{f-FFF}) can be reduced to a vertical line (see figure \ref{f-FFF}). (See Theorem \ref{2Th2}.)
In Section 4, we prove that the minimization problem on the vertical line can be reduced to a particular point (hexagonal point (see figure \ref{f-FFF})). We develop effective methods and delicate analysis to obtain the estimates, which can be generalized to solve related problems. As a consequence we prove Theorem \ref{Th1}. Finally Section 5 contains proofs of remaining Theorems. 

\begin{figure}
\centering
 \includegraphics[scale=0.58]{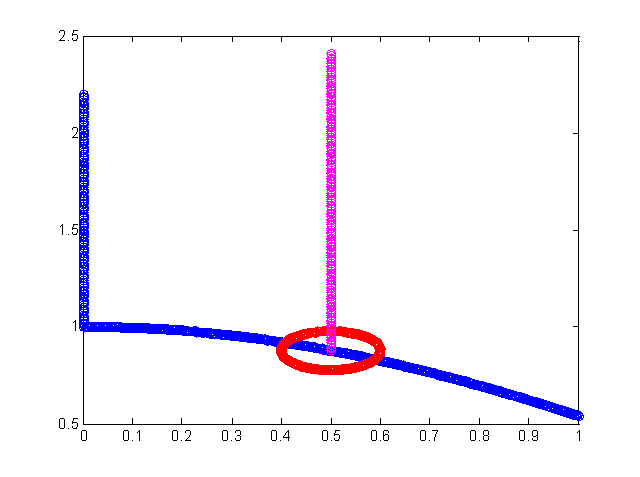}\includegraphics[scale=0.58]{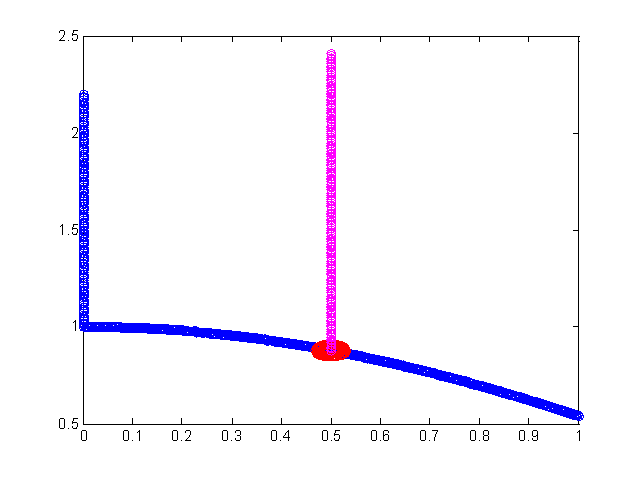}
 \caption{Location of the fundamental region and hexagonal point.}
\label{f-FFF}
\end{figure}

\section{Preliminaries }
\setcounter{equation}{0}

In this section we collect  some simple symmetries of the theta function $\theta (s; z)$ and the associated fundamental domain, and also the properties of Jacobi theta functions to be used in later sections.

Let
$
\mathbb{H}
$
 denote the upper half plane and  $\mathcal{S} $ denote the modular group
\begin{equation}\aligned\label{modular}
\mathcal{S}:=SL_2(\mathbb{Z})=\{
\left(
  \begin{array}{cc}
    a & b \\
    c & d \\
  \end{array}
\right), ad-bc=1, a, b, c, d\in\mathbb{Z}
\}.
\endaligned\end{equation}

We use the following definition of fundamental domain which is slightly different from the classical definition (see \cite{Mon1988}):
\begin{definition} [page 108, \cite{Eva1973}]
The fundamental domain associated to group $G$ is a connected domain $\mathcal{D}$ satisfies
\begin{itemize}
  \item For any $z\in\mathbb{H}$, there exists an element $\pi\in G$ such that $\pi(z)\in\overline{\mathcal{D}}$;
  \item Suppose $z_1,z_2\in\mathcal{D}$ and $\pi(z_1)=z_2$ for some $\pi\in G$, then $z_1=z_2$ and $\pi=\pm Id$.
\end{itemize}
\end{definition}

By Definition 1, the fundamental domain associated to modular group $\mathcal{S}$ is
\begin{equation}\aligned\label{Fd1}
\mathcal{D}_{\mathcal{S}}:=\{
z\in\mathbb{H}: |z|>1,\; -\frac{1}{2}<x<\frac{1}{2}
\}
\endaligned\end{equation}
which is open.  Note that the fundamental domain can be open. (See [page 30, \cite{Apo1976}].)

Next we introduce another group related  to the functionals $\theta(\alpha;z)$. The generators of the group are given by
\begin{equation}\aligned\label{GroupG1}
\mathcal{G}: \hbox{the group generated by} \;\;\tau\mapsto -\frac{1}{\tau},\;\; \tau\mapsto \tau+1,\;\;\tau\mapsto -\overline{\tau}.
\endaligned\end{equation}

It is easy to see that
the fundamental domain associated to group $\mathcal{G}$ denoted by $\mathcal{D}_{\mathcal{G}}$ is
\begin{equation}\aligned\label{Fd3}
\mathcal{D}_{\mathcal{G}}:=\{
z\in\mathbb{H}: |z|>1,\; 0<x<\frac{1}{2}
\}.
\endaligned\end{equation}

The following lemma characterizes the fundamental symmetries of the theta functions $\theta (s; z)$. The proof is easy so we omit it.
\begin{lemma}\label{G111} For any $s>0$, any $\gamma\in \mathcal{G}$ and $z\in\mathbb{H}$,
$\ \theta (s; \gamma(z))=\theta (s;z)$.
\end{lemma}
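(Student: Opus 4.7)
The plan is to check invariance under each of the three generators of $\mathcal{G}$ listed in \eqref{GroupG1} separately; since a function invariant under a generating set of a group is automatically invariant under the whole group, this reduction suffices. In each case the strategy is identical: compute the imaginary part $\mathrm{Im}(\gamma(z))$ and the quantity $|m\gamma(z)+n|^2$, then exhibit a bijection of the summation index $(m,n)\in\mathbb{Z}^2$ that converts the new summand into the original.

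For the translation $z\mapsto z+1$, the imaginary part is unchanged and $|m(z+1)+n|^2=|mz+(m+n)|^2$, so the reindexing $(m,n)\mapsto(m,m+n)$ preserves the full sum. For the inversion $z\mapsto -1/z$, a direct calculation gives $\mathrm{Im}(-1/z)=y/|z|^2$ and $|{-m/z}+n|^2=|nz-m|^2/|z|^2$; plugging into the exponent of $\theta(s;\cdot)$, the two factors of $|z|^2$ cancel and the argument reduces to $s\pi|nz-m|^2/y$, which is brought back to the original form by the bijection $(m,n)\mapsto(n,-m)$. For the complex-conjugation reflection $z\mapsto -\overline{z}$, the imaginary part is again fixed and $|-m\overline{z}+n|^2=|\overline{-mz+n}|^2=|-mz+n|^2$, so the sign flip $(m,n)\mapsto(-m,n)$ finishes the last case.

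There is essentially no obstacle here beyond careful bookkeeping: one only needs to verify that each of the three index substitutions is a genuine bijection of $\mathbb{Z}^2$, and this is immediate since each is an invertible $\mathbb{Z}$-linear map (with determinants $1$, $1$, and $-1$ respectively). This is precisely why the authors can write ``the proof is easy so we omit it''; the content is just the standard modular invariance of the Epstein theta function attached to a rank-two lattice in $\mathbb{R}^2$, together with the extra reflection symmetry coming from the freedom to conjugate the lattice.
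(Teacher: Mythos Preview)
Your proof is correct and is exactly the standard argument the paper alludes to when it writes ``the proof is easy so we omit it'': verify invariance under each of the three generators in \eqref{GroupG1} by computing $\mathrm{Im}(\gamma(z))$ and $|m\gamma(z)+n|^2$ and reindexing the double sum via the obvious $GL_2(\mathbb{Z})$ substitution. There is nothing to add; the bookkeeping you record (including the observation that invariance under a generator automatically gives invariance under its inverse, hence under the full group) is all that is required.
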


Let
\begin{equation}\label{Wbeta} \aligned
\mathrm{W}_\beta(\alpha; z):=\theta(\alpha;z)-\beta\theta(2\alpha;z).
\endaligned\end{equation}

\begin{lemma}\label{Geee}  For any $\alpha>0$, any $\gamma\in \mathcal{G}$ and $z\in\mathbb{H}$,
$\mathrm{W}_\beta(\alpha; \gamma(z))=\mathrm{W}_\beta (\alpha;z)$.
\end{lemma}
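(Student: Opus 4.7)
\textbf{Proof proposal for Lemma \ref{Geee}.} The plan is to obtain the invariance of $\mathrm{W}_\beta(\alpha; \cdot)$ as an immediate linear consequence of the invariance of $\theta(s; \cdot)$ already recorded in Lemma \ref{G111}. The functional $\mathrm{W}_\beta(\alpha; z) = \theta(\alpha; z) - \beta\,\theta(2\alpha; z)$ is by definition a linear combination of two theta functions, one at frequency $\alpha$ and one at frequency $2\alpha$, with coefficients $1$ and $-\beta$ that are independent of the lattice parameter $z$. Since the group $\mathcal{G}$ acts on $\mathbb{H}$ and does not touch the frequency parameter, invariance under $\mathcal{G}$ of each summand would force invariance of the linear combination.

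Concretely, I would fix $\alpha > 0$, $\beta \in \R$, $\gamma \in \mathcal{G}$, and $z \in \mathbb{H}$, and then apply Lemma \ref{G111} twice: once with $s = \alpha > 0$ to get $\theta(\alpha; \gamma(z)) = \theta(\alpha; z)$, and once with $s = 2\alpha > 0$ to get $\theta(2\alpha; \gamma(z)) = \theta(2\alpha; z)$. Substituting these two identities into the definition \eqref{Wbeta} yields
\begin{equation*}
\mathrm{W}_\beta(\alpha; \gamma(z)) = \theta(\alpha; \gamma(z)) - \beta\,\theta(2\alpha; \gamma(z)) = \theta(\alpha; z) - \beta\,\theta(2\alpha; z) = \mathrm{W}_\beta(\alpha; z),
\end{equation*}
which is the claim.

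There is essentially no obstacle here: the only thing to check is that the hypotheses of Lemma \ref{G111} are satisfied by both summands, and this reduces to noting that both $\alpha$ and $2\alpha$ are positive whenever $\alpha > 0$, so the invariance statement of Lemma \ref{G111} applies to each one. The role of this lemma in the paper is not a deep result but a preliminary observation that legitimizes restricting the minimization of $\mathrm{W}_\beta$ to the fundamental domain $\mathcal{D}_\mathcal{G}$ described in \eqref{Fd3}; the real work will begin in Section 3 where one must show that the minimum over $\mathcal{D}_\mathcal{G}$ is attained on the vertical boundary, and in Section 4 where the one-dimensional problem on that vertical line is analyzed.
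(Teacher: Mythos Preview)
Your proof is correct and is exactly the approach the paper implicitly takes: the paper states Lemma~\ref{Geee} immediately after Lemma~\ref{G111} without proof, since the invariance of $\mathrm{W}_\beta$ is an immediate linear consequence of the invariance of $\theta(s;\cdot)$ applied at $s=\alpha$ and $s=2\alpha$. There is nothing to add.
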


We also need some delicate analysis of the Jacobi theta function.

We first recall the following well-known Jacobi triple product formula:
\begin{equation}\aligned\label{Jacob1}
\prod_{m=1}^\infty(1-x^{2m})(1+x^{2m-1}y^2)(1+\frac{x^{2m-1}}{y^2})=\sum_{n=-\infty}^\infty x^{n^2} y^{2n}
 \endaligned\end{equation}
for complex numbers $x,y$ with $|x|<1$, $y\neq0$.

The Jacobi theta function is defined as
\begin{equation}\aligned\nonumber
\vartheta_J(z;\tau):=\sum_{n=-\infty}^\infty e^{i\pi n^2 \tau+2\pi i n z},
 \endaligned\end{equation}
and the classical one-dimensional theta function  is given by
\begin{equation}\aligned\label{TXY}
\vartheta(X;Y):=\vartheta_J(Y;iX)=\sum_{n=-\infty}^\infty e^{-\pi n^2 X} e^{2n\pi i Y}.
 \endaligned\end{equation}
Hence by the Jacobi triple product formula \eqref{Jacob1}, it holds
\begin{equation}\aligned\label{Product}
\vartheta(X;Y)=\prod_{n=1}^\infty(1-e^{-2\pi n X})(1+e^{-2(2n-1)\pi X}+2e^{-(2n-1)\pi X}\cos(2\pi Y)).
 \endaligned\end{equation}

The following Lemmas \ref{LemmaTTT} and \ref{LemmaT2} are proved in  \cite{Luo2022}.
\begin{lemma}\label{LemmaTTT} Assume $X>\frac{1}{5}$. If $\sin(2\pi Y)>0$, then
\begin{equation}\aligned\nonumber
-\overline\vartheta(X)\sin(2\pi Y)\leq\frac{\partial}{\partial Y}\vartheta(X;Y)\leq-\underline\vartheta(X)\sin(2\pi Y).
 \endaligned\end{equation}
If $\sin(2\pi Y)<0$, then
\begin{equation}\aligned\nonumber
-\underline\vartheta(X)\sin(2\pi Y)\leq\frac{\partial}{\partial Y}\vartheta(X;Y)\leq-\overline\vartheta(X)\sin(2\pi Y).
 \endaligned\end{equation}
Here
\begin{equation}\aligned\nonumber
\underline\vartheta(X):=4\pi e^{-\pi X}(1-\mu(X)), \;\; \overline\vartheta(X):=4\pi e^{-\pi X}(1+\mu(X)),
 \endaligned\end{equation}
and
\begin{equation}\label{mmmx}
\mu(X):=\sum_{n=2}^\infty n^2 e^{-\pi(n^2-1)X}.
\end{equation}

\end{lemma}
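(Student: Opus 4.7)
The plan is to obtain the derivative as an explicit trigonometric series, isolate the dominant $n=1$ term, and control the tail using the elementary inequality $|\sin(n\theta)| \leq n\,|\sin\theta|$ (valid for every positive integer $n$ and proved by a one-line induction via the sine addition formula). First I would exploit that $\vartheta(X;Y)$ is real-valued for real $Y$, so combining the $n$ and $-n$ terms in \eqref{TXY} yields
\begin{equation*}
\vartheta(X;Y) = 1 + 2\sum_{n=1}^{\infty} e^{-\pi n^{2} X}\cos(2\pi n Y).
\end{equation*}
Differentiating termwise (justified by absolute convergence of the derivative series for $X>0$) gives
\begin{equation*}
\frac{\partial}{\partial Y}\vartheta(X;Y) = -4\pi\sum_{n=1}^{\infty} n\, e^{-\pi n^{2} X}\sin(2\pi n Y).
\end{equation*}

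Next I would split off the first term and estimate the remainder. Writing
\begin{equation*}
\frac{\partial}{\partial Y}\vartheta(X;Y) = -4\pi e^{-\pi X}\sin(2\pi Y) \;-\; 4\pi \sum_{n=2}^{\infty} n\, e^{-\pi n^{2} X}\sin(2\pi n Y),
\end{equation*}
the inequality $|\sin(2\pi n Y)|\leq n|\sin(2\pi Y)|$ yields
\begin{equation*}
\Bigl|\sum_{n=2}^{\infty} n\, e^{-\pi n^{2}X}\sin(2\pi n Y)\Bigr| \;\leq\; |\sin(2\pi Y)|\sum_{n=2}^{\infty} n^{2}\,e^{-\pi n^{2}X} \;=\; e^{-\pi X}\,\mu(X)\,|\sin(2\pi Y)|,
\end{equation*}
with $\mu(X)$ as in \eqref{mmmx}. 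Hence the remainder lies in the interval $[-4\pi e^{-\pi X}\mu(X)|\sin(2\pi Y)|,\; 4\pi e^{-\pi X}\mu(X)|\sin(2\pi Y)|]$.

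Finally I would assemble the bounds. In the case $\sin(2\pi Y)>0$ we substitute $|\sin(2\pi Y)|=\sin(2\pi Y)$ and combine with the leading term to obtain
\begin{equation*}
-4\pi e^{-\pi X}(1+\mu(X))\sin(2\pi Y) \;\leq\; \frac{\partial}{\partial Y}\vartheta(X;Y) \;\leq\; -4\pi e^{-\pi X}(1-\mu(X))\sin(2\pi Y),
\end{equation*}
which is exactly the desired inequality with $\underline\vartheta(X)$ and $\overline\vartheta(X)$. The case $\sin(2\pi Y)<0$ follows by replacing $Y$ with $-Y$ (or by the same estimate with signs flipped since then $|\sin(2\pi Y)|=-\sin(2\pi Y)$).

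The only non-cosmetic point is verifying the elementary bound $|\sin(n\theta)|\leq n|\sin\theta|$; everything else is a rearrangement of series. The hypothesis $X>\tfrac{1}{5}$ is not needed for the inequality itself to hold, but I suspect it is imposed so that $\mu(X)$ is small enough (numerically $\mu(X)<1$ comfortably for $X>1/5$) that $\underline\vartheta(X)>0$, which is the nontrivial regime where the lower bound has the useful sign structure needed in later sections.
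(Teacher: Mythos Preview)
Your argument is correct. The paper does not actually give its own proof of this lemma; it merely states that Lemmas~\ref{LemmaTTT} and~\ref{LemmaT2} are proved in \cite{Luo2022}. Your direct approach---computing $\partial_Y\vartheta(X;Y)=-4\pi\sum_{n\ge1}n\,e^{-\pi n^2X}\sin(2\pi nY)$, isolating the $n=1$ term, and bounding the tail via $|\sin(n\theta)|\le n|\sin\theta|$---is exactly the natural and standard argument, and your observation that the hypothesis $X>\tfrac15$ serves only to guarantee $\mu(X)<1$ (hence $\underline\vartheta(X)>0$) is also correct.
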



\begin{lemma}\label{LemmaT2}
Assume $X<\min\{\frac{\pi}{\pi+2},\frac{\pi}{4\log\pi}\}=\frac{\pi}{\pi+2}$. If $\sin(2\pi Y)>0$, then
\begin{equation}\aligned\nonumber
-\overline\vartheta(X)\sin(2\pi Y)\leq\frac{\partial}{\partial Y}\vartheta(X;Y)\leq-\underline\vartheta(X)\sin(2\pi Y).
 \endaligned\end{equation}
If $\sin(2\pi Y)<0$, then
\begin{equation}\aligned\nonumber
-\underline\vartheta(X)\sin(2\pi Y)\leq\frac{\partial}{\partial Y}\vartheta(X;Y)\leq-\overline\vartheta(X)\sin(2\pi Y).
 \endaligned\end{equation}
Here
\begin{equation}\aligned\nonumber
\underline\vartheta(X):=\pi e^{-\frac{\pi}{4X}}X^{-\frac{3}{2}};\;\; \overline\vartheta(X):=X^{-\frac{3}{2}}.
 \endaligned\end{equation}

\end{lemma}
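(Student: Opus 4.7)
The plan is to prove the lemma by combining the Fourier series
$$-\partial_Y\vartheta(X;Y)=4\pi\sum_{n\ge 1}n\,e^{-\pi n^2 X}\sin(2\pi nY)$$
with the dual Poisson representation
$$\vartheta(X;Y)=X^{-1/2}\sum_{k\in\mathbb{Z}} e^{-\pi(k-Y)^2/X},$$
the latter being the natural tool for the small-$X$ regime at hand. By the symmetries $\vartheta(X;Y+1)=\vartheta(X;-Y)=\vartheta(X;Y)$ it suffices to establish the two-sided bound on $Y\in(0,1/2)$, where $\sin(2\pi Y)>0$.

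For the upper bound I would use the Chebyshev-type inequality $|\sin(2\pi nY)|\le n|\sin(2\pi Y)|$ to reduce the problem to bounding $4\pi\sum_{n\ge 1}n^2\,e^{-\pi n^2 X}$. Differentiating the Jacobi identity $\sum_{n\in\mathbb{Z}} e^{-\pi n^2 X}=X^{-1/2}\sum_{n\in\mathbb{Z}} e^{-\pi n^2/X}$ in $X$ gives an exact closed form for this sum whose leading part is $1/(4\pi X^{3/2})$; a direct estimate of the exponentially small correction shows it is nonpositive under the hypothesis, yielding $4\pi\sum_{n\ge 1}n^2 e^{-\pi n^2 X}\le X^{-3/2}$ and hence $\overline\vartheta(X)=X^{-3/2}$.

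For the lower bound I would switch to the Poisson form and pair $k$ with $1-k$. With $t=1/2-Y$ and $a_n=n+\tfrac12$, term-by-term differentiation and regrouping yield
$$-\partial_Y\vartheta(X;Y)=\frac{4\pi}{X^{3/2}}\sum_{n\ge 0}e^{-\pi(a_n^2+t^2)/X}\bigl[a_n\sinh(2\pi a_n t/X)-t\cosh(2\pi a_n t/X)\bigr].$$
The $n=0$ summand already carries the Gaussian factor $e^{-\pi/(4X)}$ appearing in $\underline\vartheta(X)$, and the hypothesis $X<\pi/(\pi+2)$ is exactly what makes the tangent-line inequality $\tanh(\pi t/X)-2t\ge \pi t$ valid as $t\to 0$, so that this summand alone bounds $\tfrac12\,e^{-\pi/(4X)}\sin(2\pi t)$ from below in that regime.

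The main obstacle is showing the lower bound uniformly over the full interval $(0,1/2)$. Near $t=1/2$ (that is $Y\to 0$) the isolated $n=0$ term can change sign, while the entire series still vanishes there by the clean telescoping identity $\sum_{n\ge 0}\bigl[a_n e^{-\pi a_n^2/X}-(a_n+1)e^{-\pi(a_n+1)^2/X}\bigr]=0$; transferring this vanishing into the target $\sin(2\pi t)$-type estimate on a neighborhood requires combining a mean-value argument in $t$ with the geometric decay $e^{-\pi a_n^2/X}$ of the tail, for which the secondary constraint $X<\pi/(4\log\pi)$ is needed. Reconciling the two boundary regimes $t\to 0$ and $t\to 1/2$ while preserving the sharp constants $1$ and $\pi$ in $\overline\vartheta$ and $\underline\vartheta$ is the technical core of the argument.
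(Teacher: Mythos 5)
First, a point of reference: this paper contains no proof of Lemma \ref{LemmaT2} at all --- it is imported verbatim from \cite{Luo2022} (``The following Lemmas \ref{LemmaTTT} and \ref{LemmaT2} are proved in \cite{Luo2022}''), so there is no in-paper argument to measure yours against, and your attempt must be judged on its own. The upper-bound half of your proposal is sound and essentially complete: $|\sin(2\pi nY)|\le n|\sin(2\pi Y)|$ reduces the claim to $4\pi\sum_{n\ge1}n^2e^{-\pi n^2X}\le X^{-3/2}$, and differentiating the Jacobi identity gives the exact expression $4\pi\sum_{n\ge1}n^2e^{-\pi n^2X}=X^{-3/2}\bigl(1+2\sum_{k\ge1}(1-\tfrac{2\pi k^2}{X})e^{-\pi k^2/X}\bigr)$, in which every correction term is negative whenever $X<2\pi$. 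That yields $\overline\vartheta(X)=X^{-3/2}$.

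The lower bound, which is where the content of the lemma lies, has a genuine gap. Your pairing formula is correct, and you correctly identify that $X<\pi/(\pi+2)$ is exactly the sharp condition as $t\to0$. But the ``isolated $n=0$ summand'' strategy provably fails on a whole neighbourhood of $t=1/2$: after normalizing by $\pi X^{-3/2}e^{-\pi/(4X)}$, the $n=0$ term equals $(1-2t)e^{\pi t(1-t)/X}-(1+2t)e^{-\pi t(1+t)/X}$, which at $t=1/2$ is $-2e^{-3\pi/(4X)}<0$ while the target $\sin(2\pi t)$ vanishes there; the rescue must come from pairing consecutive terms $n$ and $n+1$, whose individual cancellations are of the much smaller orders $e^{-4\pi/X}, e^{-9\pi/X},\dots$, and your proposal only gestures at ``a mean-value argument in $t$ combined with geometric decay'' without carrying it out. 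You yourself flag this as ``the technical core of the argument,'' which is an accurate self-assessment: it is the part that is missing. Two further slips reinforce that the endpoint analysis is not yet under control: the telescoping identity you invoke, $\sum_{n\ge0}\bigl[a_ne^{-\pi a_n^2/X}-(a_n+1)e^{-\pi(a_n+1)^2/X}\bigr]$, telescopes to $\tfrac12e^{-\pi/(4X)}$, not to $0$ (the correct vanishing at $Y=0$ is the integer-indexed identity $\sum_{n\ge0}\bigl[ne^{-\pi n^2/X}-(n+1)e^{-\pi(n+1)^2/X}\bigr]=0$); and the secondary constraint $X<\pi/(4\log\pi)$ is most naturally the consistency condition $\pi e^{-\pi/(4X)}\le1$, i.e.\ $\underline\vartheta(X)\le\overline\vartheta(X)$, rather than a tail estimate. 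Until the interval $t\in(0,\tfrac12)$ is covered uniformly --- small $t$, a middle range where the $n=0$ term has ample margin, and the endpoint range near $t=\tfrac12$ where consecutive terms must be combined --- the inequality $\partial_Y\vartheta(X;Y)\le-\underline\vartheta(X)\sin(2\pi Y)$ is not established.
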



\section{The transversal monotonicity}
\setcounter{equation}{0}
Let $\mathcal{D}_{\mathcal{G}}:=\{
z\in\mathbb{H}: |z|>1,\; 0<x<\frac{1}{2}
\}$ be the fundamental domain associated to the group $\mathcal{G}$. Define the vertical line
\begin{equation}\aligned\label{Gfff}
\Gamma:=\{
z\in\mathbb{H}: \Re(z)=\frac{1}{2},\; \Im(z)\geq\frac{\sqrt3}{2}
\}.
\endaligned\end{equation}

By the group invariance (Lemma \ref{Geee}), one has
\begin{equation}\aligned\label{Domain1}
\min_{z\in\mathbb{H}}\Big(\theta(\alpha;z)-\beta\theta(2\alpha;z)\Big)
=\min_{z\in\overline{\mathcal{D}_{\mathcal{G}}}}\Big(\theta(\alpha;z)-\beta\theta(2\alpha;z)\Big).
\endaligned\end{equation}

Let $\mu (X)$ be defined in (\ref{mmmx}) and
\begin{equation} \label{beta0} \aligned
\beta_0:=\min\begin{cases}
\frac{\pi e^{2\pi}-4e^{-6\pi}}{2}\\
\frac{\sqrt2 e^{\frac{\sqrt3\pi}{4}}(1-\mu(\frac{1}{2}))-4e^{-\frac{5\sqrt3\pi}{4}}(1+\mu(\frac{1}{4}))}{1+\mu(\frac{1}{4})}\\
\frac{4\sqrt2\pi e^\pi\big(1-\mu(\frac{1}{4})\big)-16\sqrt2\pi e^{-\frac{7\pi}{2}}\big(1+\mu(\frac{1}{4})\big)}{64}.
\end{cases}
\endaligned\end{equation}
Numerically,
\begin{equation}\aligned\nonumber
\beta_0:\cong3.801819\cdots.
\endaligned\end{equation}
As we shall see in the next Section, one only needs to require that $\beta_0\geq\sqrt2$
to obtain all the main results.

In this section, we aim to establish that
\begin{theorem}\label{2Th1} Assume that $\alpha\geq1$. Let $\beta_0$ be defined at (\ref{beta0}). Then   for $\beta<\beta_0$,
\begin{equation}\aligned\nonumber
\min_{z\in\mathbb{H}}\Big(\theta(\alpha;z)-\beta\theta(2\alpha;z)\Big)=\min_{z\in\overline{\mathcal{D}_{\mathcal{G}}}}\Big(\theta(\alpha;z)-\beta\theta(2\alpha;z)\Big)=\min_{z\in\Gamma}\Big(\theta(\alpha;z)-\beta\theta(2\alpha;z)\Big),
\endaligned\end{equation}
where $\Gamma$ is a vertical line and defined at \eqref{Gfff}.
\end{theorem}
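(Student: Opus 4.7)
The plan is to establish \emph{transversal monotonicity}: $\partial_x \mathrm{W}_\beta(\alpha;z) < 0$ at every interior point $z = x+iy$ of $\mathcal{D}_{\mathcal{G}}$ (so $0 < x < 1/2$ and $y > \sqrt{3}/2$). Combined with the group invariance already recorded in \eqref{Domain1}, this will push every minimizer over $\overline{\mathcal{D}_{\mathcal{G}}}$ onto the right edge $x = 1/2$, and the constraint $|z|\geq 1$ then identifies that edge with the vertical ray $\Gamma$.

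The computational starting point is Poisson summation applied to the inner $n$-sum in \eqref{thetas}, which yields
\begin{equation*}
\theta(\alpha;z) \,=\, \sqrt{y/\alpha}\sum_{m\in\mathbb{Z}} e^{-\alpha\pi y m^2}\, \vartheta\!\left(\tfrac{y}{\alpha};\, m x\right),
\end{equation*}
with $\vartheta$ the one-dimensional Jacobi theta of \eqref{TXY}. Differentiating in $x$, pairing the $\pm m$ terms, and using that $\vartheta_Y(X;\cdot)$ is odd gives
\begin{equation*}
\partial_x \theta(\alpha;z) \,=\, 2\sqrt{X_\alpha}\sum_{m\geq 1} m\, e^{-\alpha\pi y m^2}\, \vartheta_Y(X_\alpha;\, m x),\qquad X_\alpha := y/\alpha,
\end{equation*}
together with the analogous formula for $\theta(2\alpha;z)$ at parameter $X_{2\alpha}=X_\alpha/2$. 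On $(0,1/2)$ one has $\sin(2\pi x) > 0$, so Lemmas \ref{LemmaTTT}--\ref{LemmaT2} deliver sharp sign-definite bounds on the $m=1$ term; since the two lemmas cover the ranges $X > 1/5$ and $X < \pi/(\pi+2)$ respectively, and these two ranges overlap, they furnish workable estimates for both $X_\alpha$ and $X_\alpha/2$ at every point of $\mathcal{D}_{\mathcal{G}}$.

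From here the plan is to separate the leading $m=1$ contribution in each derivative and absorb the $|m|\geq 2$ tails via the uniform bound $|\vartheta_Y(X;Y)|\leq \overline\vartheta(X)$ together with the geometric decay of $\sum_{m\geq 2} m e^{-\alpha\pi y m^2}$, which is exactly where the tail functions $\mu(1/2),\mu(1/4)$ from \eqref{mmmx} enter. After factoring out the common positive $\sin(2\pi x)$, the desired sign of $\partial_x \mathrm{W}_\beta(\alpha;z)$ reduces to a uniform lower bound on $|\partial_x\theta(\alpha;z)|/|\partial_x\theta(2\alpha;z)|$ over $\alpha \geq 1,\, y > \sqrt{3}/2$. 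The leading order of that ratio is $\sqrt{2}\, e^{\alpha\pi y}\, \underline\vartheta(X_\alpha)/\overline\vartheta(X_\alpha/2)$, which is always at least of order $\sqrt{2}$ and is the reason the sharp threshold in the main theorems is exactly $\sqrt{2}$. The three branches in the definition \eqref{beta0} of $\beta_0$ correspond to the three sub-regimes in $(\alpha,y)$ where this lower bound is smallest and therefore binding: a large-$y$ regime where only the leading term survives, a small-$X$ regime handled by Lemma \ref{LemmaT2}, and the corner $y = \sqrt{3}/2$, $\alpha = 1$.

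The main obstacle is making the $|m|\geq 2$ remainder control simultaneously clean across the three regimes and tight enough to force $\beta_0 \geq \sqrt{2}$, since it is precisely this inequality, rather than the full strength of $\beta_0$, that the subsequent sections of the paper actually use. The binding sub-case is the corner $y = \sqrt{3}/2$, $\alpha = 1$: there the exponential damping $e^{-\alpha\pi y m^2}$ is weakest, so the $m = 2$ correction cannot be discarded and must be tracked through $\mu(1/2)$ and $\mu(1/4)$ without eroding the $\sqrt{2}$ margin. Once the ratio bound is in hand uniformly, strict monotonicity on the interior of $\mathcal{D}_{\mathcal{G}}$ follows immediately, and the stated identity of minima follows by continuity up to the boundary.
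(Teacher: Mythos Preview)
Your plan matches the paper's: Poisson-expand, isolate the $m=1$ term, control the $m\geq 2$ tail via $|\sin(2\pi m x)|\leq m\sin(2\pi x)$ (which contributes an extra factor of $m$, so the tail carries $m^2$ and is exactly $\mu$ from \eqref{mmmx}, not the $\sum m\,e^{-\alpha\pi y m^2}$ you wrote), factor out $\sin(2\pi x)>0$, and reduce to a scalar inequality in $(y,\alpha)$.

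Two points in your roadmap need correcting. First, the three branches of $\beta_0$ in \eqref{beta0} are not ``large-$y$, small-$X$, corner''. They are determined by which of Lemmas \ref{LemmaTTT} and \ref{LemmaT2} applies to each of $X_\alpha=y/\alpha$ and $X_{2\alpha}=y/(2\alpha)$ separately: both $\leq 1/4$ (Case A), both $\geq 1/4$ (Case B), and the mixed regime $X_\alpha\geq 1/4\geq X_{2\alpha}$ (Case C). The corner $(y,\alpha)=(\sqrt{3}/2,1)$ lies inside Case B; Case C is a genuinely distinct regime and in fact delivers the binding value $\beta_0\approx 3.8$, so it cannot be skipped. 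Second, the leading ratio $\sqrt{2}\,e^{\alpha\pi y}\,\underline\vartheta(X_\alpha)/\overline\vartheta(X_\alpha/2)$ is much larger than $\sqrt{2}$ throughout the domain---that is precisely the content of $\beta_0\approx 3.8$. The sharp threshold $\sqrt{2}$ in Theorem \ref{Th1} comes not from this transversal estimate but from the $y\to\infty$ asymptotics handled later in Lemma \ref{Lemma41}. The paper also splits $x$ into $(0,1/3]$ and $[1/3,1/2)$ (Lemmas \ref{Lemma33}--\ref{Lemma34}) to track the sign change of $\sin(4\pi x)$ explicitly, but your uniform tail bound already covers both sub-intervals at once, so that split is an optimization rather than a necessity for the argument.
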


The proof of Theorem \ref{2Th1} follows from the following monotonicity result
\begin{theorem}\label{2Th3}
Assume that $\alpha\geq1$. Then for $\beta<\beta_0$
\begin{equation}\aligned\nonumber
\frac{\partial}{\partial x}\Big(\theta(\alpha;z)-\beta\theta(2\alpha;z)\Big)
<0, \;\;\hbox{for}\;\; z\in
{\mathcal{D}_{\mathcal{G}}}.
\endaligned\end{equation}
\end{theorem}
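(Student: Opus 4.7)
The plan is to differentiate the Poisson-dual representation of $\theta(\alpha;z)$ in $x$, isolate the dominant $m=1$ contribution (which is strictly negative on $\mathcal{D}_{\mathcal{G}}$), and absorb everything else using the pointwise bounds for $\vartheta_Y$ in Lemmas \ref{LemmaTTT} and \ref{LemmaT2}. Applying Poisson summation in the inner $n$-sum gives
$$\theta(\alpha;z)\;=\;\sqrt{y/\alpha}\,\sum_{m\in\mathbb{Z}} e^{-\alpha\pi m^2 y}\,\vartheta\!\left(\tfrac{y}{\alpha};\,mx\right),$$
with $\vartheta(X;Y)$ the Jacobi theta of \eqref{TXY}. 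Differentiating in $x$ and pairing $\pm m$ via $\vartheta_Y(X;-Y)=-\vartheta_Y(X;Y)$ yields
$$\partial_x\theta(\alpha;z)\;=\;2\sqrt{y/\alpha}\,\sum_{m=1}^\infty m\,e^{-\alpha\pi m^2 y}\,\vartheta_Y\!\left(\tfrac{y}{\alpha};\,mx\right),$$
and the analogous identity with $\alpha\to 2\alpha$. On $\mathcal{D}_{\mathcal{G}}$ one has $0<x<1/2$, hence $\sin(2\pi x)>0$, and the $m=1$ term is strictly negative of size comparable to $-8\pi\sqrt{y/\alpha}\,e^{-\alpha\pi y-\pi y/\alpha}\sin(2\pi x)$ by the lower bounds in Lemmas \ref{LemmaTTT}--\ref{LemmaT2}. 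This is the sole source of negativity; everything else must be absorbed by it.

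Next I would perform a case split in $y$ so that, for the argument $X=y/\alpha$ (respectively $X=y/(2\alpha)$), exactly one of Lemma \ref{LemmaTTT} ($X>1/5$) or Lemma \ref{LemmaT2} ($X<\pi/(\pi+2)$) applies, exploiting the overlap on $(1/5,\pi/(\pi+2))$. Within each sub-region I would replace $\vartheta_Y(X;mx)$ by $|\vartheta_Y|\leq\overline\vartheta(X)$ for $m\geq 2$, sum the tail $\sum_{m\geq 2} m\,e^{-\alpha\pi m^2 y}$ via the geometric structure encoded in the function $\mu(\cdot)$ of \eqref{mmmx}, and crudely bound the full $\beta\,\partial_x\theta(2\alpha;z)$ in the same fashion. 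The $\beta$-term carries an additional exponential suppression $e^{-\alpha\pi y}$ from its doubled frequency, which is precisely what permits a finite threshold on $\beta$. The three entries in the definition \eqref{beta0} of $\beta_0$ correspond to the three regimes of $y$; each provides a regime-specific lower bound on the ratio
$$\frac{\text{dominant }|\partial_x\theta(\alpha;z)|\text{-contribution}}{\text{crude bound on }|\partial_x\theta(2\alpha;z)|},$$
and $\beta_0$ is simply their minimum. Whenever $\beta<\beta_0$ this yields $\partial_x\mathrm{W}_\beta(\alpha;z)<0$ on $\mathcal{D}_{\mathcal{G}}$, which is Theorem \ref{2Th3}; Theorem \ref{2Th1} then follows by integrating in $x$ across the fundamental domain and using the group-invariance from Lemma \ref{Geee} to reflect onto $\Gamma$.

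The main obstacle will be the sign change of $\sin(2\pi m x)$ for $m\geq 2$ as $x$ traverses $(0,1/2)$: for instance, the $m=2$ contribution is positive on $(1/4,1/2)$ and can partially cancel the $m=1$ negative term. Because the exponential weight $e^{-\alpha\pi m^2 y}$ with $y>\sqrt3/2$ and $\alpha\geq 1$ makes these tails genuinely small, crude absolute-value bounds are enough, and no finer phase analysis is required. The delicate bookkeeping then lies in verifying that each of the three regime-specific ratios making up \eqref{beta0} is strictly larger than $\sqrt 2$ (indeed numerically $\beta_0\approx 3.8$), since $\sqrt 2$ is the only threshold actually required downstream in Theorems \ref{Th1}--\ref{Th4}. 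The generous gap between $\beta_0$ and $\sqrt 2$ reflects how much slack the crude tail estimates leave.
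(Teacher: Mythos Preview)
Your proposal is correct and follows essentially the same route as the paper: Poisson-dual expansion, isolate the $n=1$ term, control the tails via Lemmas~\ref{LemmaTTT}--\ref{LemmaT2} and the $\mu$-function, and split into three regimes according to whether $y/\alpha$ and $y/(2\alpha)$ lie above or below a fixed threshold (the paper uses $1/4$), each regime producing one line of \eqref{beta0}. The only cosmetic difference is that the paper performs an additional split $x\in[0,\tfrac13]$ versus $x\in[\tfrac13,\tfrac12]$ (Lemmas~\ref{Lemma33}--\ref{Lemma34}) to track the signs of the $n=2,3$ contributions explicitly before invoking $|\sin(2\pi nx)|\le n\sin(2\pi x)$, whereas you absorb these directly into the tail; both lead to the same function $\mathcal R_{\alpha,\beta}(y)$ whose positivity is Lemma~\ref{Rab}.
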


In the rest of this Section, we prove Theorem \ref{2Th3}.

\subsection{The estimates}

We provide an exponential expansion of the theta function, which is useful in our estimates.
\begin{lemma}\label{Lemma31}  We have the following  exponential expansion of theta function:
\begin{equation}\aligned\label{PPP3}
\theta (\alpha;z)=2\sqrt{\frac{y}{\alpha}}\sum_{n=1}^\infty e^{-\alpha \pi y n^2}\vartheta(\frac{y}{\alpha};nx)+\sqrt{\frac{y}{\alpha}}\vartheta(\frac{y}{\alpha};0).
\endaligned\end{equation}
\end{lemma}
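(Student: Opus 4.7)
The plan is to reduce the double lattice sum defining $\theta(\alpha;z)$ to the one-dimensional Jacobi theta $\vartheta(\cdot;\cdot)$ by first separating variables in $|mz+n|^2$ and then applying Poisson summation to the inner sum.

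First I would write $|mz+n|^2=(mx+n)^2+m^2y^2$, so that
\begin{equation*}
\theta(\alpha;z)=\sum_{m\in\mathbb{Z}}e^{-\alpha\pi y m^2}\left(\sum_{n\in\mathbb{Z}}e^{-\alpha\frac{\pi}{y}(mx+n)^2}\right).
\end{equation*}
Rearrangement is legitimate because the double series is absolutely convergent (the Gaussian weights decay rapidly in both indices). This isolates the inner sum as a one-dimensional theta-like object in the translated variable $mx$.

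Next I would apply the Poisson summation formula in the form
\begin{equation*}
\sum_{n\in\mathbb{Z}}e^{-\pi(n+a)^2/t}=\sqrt{t}\sum_{k\in\mathbb{Z}}e^{-\pi k^2 t}e^{2\pi i k a},
\end{equation*}
with $t=y/\alpha$ and $a=mx$. The right-hand side is exactly $\sqrt{y/\alpha}\,\vartheta(y/\alpha;mx)$ by the definition \eqref{TXY} of $\vartheta(X;Y)$. This converts the inner Gaussian sum into a one-dimensional Jacobi theta with half-period $mx$, at the cost of the Jacobi modular factor $\sqrt{y/\alpha}$.

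Substituting back gives
\begin{equation*}
\theta(\alpha;z)=\sqrt{\frac{y}{\alpha}}\sum_{m\in\mathbb{Z}}e^{-\alpha\pi y m^2}\,\vartheta\!\left(\tfrac{y}{\alpha};mx\right).
\end{equation*}
Finally I would split off the term $m=0$ and pair the index $m$ with $-m$ for $m\ge 1$. Since $\vartheta(X;Y)$ is an even function of $Y$ (it is a cosine series, as is transparent from the product formula \eqref{Product} or directly from the defining series \eqref{TXY}), the two contributions coincide and yield the factor $2$ in the stated identity \eqref{PPP3}.

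No serious obstacle is expected; the only bookkeeping concerns are (i) the absolute convergence that justifies interchanging the order of summation, both in the original double sum and after Poisson summation, and (ii) correctly tracking the $\sqrt{y/\alpha}$ prefactor produced by Poisson's formula. Both are routine for $y>0$ and $\alpha>0$.
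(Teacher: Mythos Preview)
Your argument is correct and follows essentially the same route as the paper: separate $|mz+n|^2=(mx+n)^2+m^2y^2$, apply Poisson summation to the inner Gaussian sum to produce $\sqrt{y/\alpha}\,\vartheta(y/\alpha;mx)$, then use the evenness of $\vartheta(X;\cdot)$ to pair $\pm m$ and split off $m=0$. The only cosmetic difference is that the paper records the Poisson identity in the inverse form $\vartheta(X;Y)=X^{-1/2}\sum_n e^{-\pi(n-Y)^2/X}$ before inverting it, whereas you quote it directly; the substance is identical.
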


\begin{proof}
In view of \eqref{TXY}, by Poisson Summation Formula, one has
\begin{equation}\aligned\label{PPP2}
\vartheta(X;Y)=X^{-\frac{1}{2}}\sum_{n\in \mathbb{Z}} e^{-\frac{\pi(n-Y)^2}{X}}.
\endaligned\end{equation}
Then
\begin{equation}\aligned\nonumber
\theta (\alpha;z)&=\sum_{(m,n)\in\mathbb{Z}^2} e^{-\alpha \pi\frac{1}{y }|nz+m|^2}
=\sum_{n\in\mathbb{Z}}e^{-\alpha \pi y n^2}\sum_{m\in\mathbb{Z}} e^{-\frac{\alpha \pi (nx+m)^2}{y}}\\
&=\sqrt{\frac{y}{\alpha}}\sum_{n\in\mathbb{Z}}e^{-\alpha \pi y n^2}\vartheta(\frac{y}{\alpha};-nx)=\sqrt{\frac{y}{\alpha}}\sum_{n\in\mathbb{Z}}e^{-\alpha \pi y n^2}\vartheta(\frac{y}{\alpha};nx)\\
&=2\sqrt{\frac{y}{\alpha}}\sum_{n=1}^\infty e^{-\alpha \pi y n^2}\vartheta(\frac{y}{\alpha};nx)+\sqrt{\frac{y}{\alpha}}\vartheta(\frac{y}{\alpha};0).
\endaligned\end{equation}

\end{proof}

\begin{lemma}\label{Lemma32} We have the following  identity for derivative of the theta function with respect to $x$
\begin{equation}\nonumber\aligned
-\frac{\partial}{\partial x}\big(
\theta(\alpha;z)-\beta\theta(2\alpha;z)
\big)
=2\sqrt{\frac{y}{2\alpha}}e^{-\pi\alpha y}\mathcal{E}_{\alpha,\beta,x}(z).
\endaligned\end{equation}
Here
\begin{equation}\nonumber\aligned
\mathcal{E}_{\alpha,\beta,x}(z):
=
\sqrt2\sum_{n=1}^\infty ne^{-\alpha\pi y(n^2-1)}\big(-\frac{\partial}{\partial Y}\vartheta(\frac{y}{\alpha};nx)\big)-\beta\sum_{n=1}^\infty ne^{-\alpha\pi y(2n^2-1)}\big(-\frac{\partial}{\partial Y}\vartheta(\frac{y}{2\alpha};nx)\big).
\endaligned\end{equation}
\end{lemma}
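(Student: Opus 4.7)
The plan is a direct calculation: apply Lemma~\ref{Lemma31} to both $\theta(\alpha;z)$ and $\theta(2\alpha;z)$, differentiate each expansion in $x$ term-by-term, and regroup so that the common prefactor $2\sqrt{y/(2\alpha)}\,e^{-\pi\alpha y}$ can be pulled outside. Since $y$ and $\alpha$ are constant in the $x$-derivative and the tail term $\sqrt{y/\alpha}\,\vartheta(y/\alpha;0)$ is independent of $x$, only the sum over $n\geq 1$ contributes.

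Applying the chain rule with inner variable $Y=nx$, so that $\partial_x\vartheta(y/\alpha;nx)=n\,\partial_Y\vartheta(y/\alpha;nx)$, yields
\begin{equation*}
\frac{\partial}{\partial x}\theta(\alpha;z)=2\sqrt{\tfrac{y}{\alpha}}\sum_{n=1}^\infty n\,e^{-\alpha\pi y n^2}\,\frac{\partial}{\partial Y}\vartheta\Big(\tfrac{y}{\alpha};nx\Big),
\end{equation*}
and the analogous formula holds for $\partial_x\theta(2\alpha;z)$ with $\alpha$ replaced by $2\alpha$ throughout. Term-by-term differentiation is legitimate because of the Gaussian decay $e^{-\alpha\pi y n^2}$ together with local boundedness of $\partial_Y\vartheta(y/\alpha;nx)$ in $n$, which give uniform convergence of the differentiated series on compact subsets of $\mathbb{H}$.

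Next I would assemble $-\partial_x(\theta(\alpha;z)-\beta\theta(2\alpha;z))$ and factor out $2\sqrt{y/(2\alpha)}\,e^{-\pi\alpha y}$. The prefactor identity $\sqrt{y/\alpha}=\sqrt{2}\,\sqrt{y/(2\alpha)}$ produces the $\sqrt{2}$ in front of the first sum in $\mathcal{E}_{\alpha,\beta,x}$; the exponent splittings
\begin{equation*}
e^{-\alpha\pi y n^2}=e^{-\pi\alpha y}\cdot e^{-\alpha\pi y(n^2-1)},\qquad e^{-2\alpha\pi y n^2}=e^{-\pi\alpha y}\cdot e^{-\alpha\pi y(2n^2-1)}
\end{equation*}
produce exactly the exponents $-\alpha\pi y(n^2-1)$ and $-\alpha\pi y(2n^2-1)$ appearing in the statement; and the overall minus sign from $-\partial_x$ is absorbed into each $\partial_Y\vartheta$ term, yielding the $-\partial_Y\vartheta$ factors in the stated expression for $\mathcal{E}_{\alpha,\beta,x}(z)$.

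There is no real obstacle beyond bookkeeping; the proof is elementary, and the only points that require care are the correct tracking of signs and the appearance of the prefactor $\sqrt{2}$ coming from the mismatch between $\sqrt{y/\alpha}$ and $\sqrt{y/(2\alpha)}$. The importance of writing the identity in this particular normalized form (with $e^{-\pi\alpha y}$ factored out and the $n=1$ Gaussian weight equal to $1$ in the first sum) is that it isolates the $n=1$ leading contribution, which will be estimated directly via Lemmas~\ref{LemmaTTT} and~\ref{LemmaT2} in the subsequent proof of Theorem~\ref{2Th3}.
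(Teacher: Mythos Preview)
Your proposal is correct and follows essentially the same route as the paper: apply Lemma~\ref{Lemma31} to both $\theta(\alpha;z)$ and $\theta(2\alpha;z)$, differentiate term by term in $x$, and factor out $2\sqrt{y/(2\alpha)}\,e^{-\pi\alpha y}$ using the identities $\sqrt{y/\alpha}=\sqrt{2}\,\sqrt{y/(2\alpha)}$ and the exponent splittings you wrote down. Your additional remark justifying term-by-term differentiation via Gaussian decay is a welcome detail the paper leaves implicit.
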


\begin{proof} By Lemma \ref{Lemma31},
\begin{equation}\nonumber\aligned
-\frac{\partial}{\partial x}\big(
\theta(\alpha;z)-\beta\theta(2\alpha;z)
\big)
&=-\frac{\partial}{\partial x}
\Big(
2\sqrt{\frac{y}{\alpha}}\sum_{n=1}^\infty e^{-\alpha \pi y n^2}\vartheta(\frac{y}{\alpha};nx)
-2\beta\sqrt{\frac{y}{2\alpha}}\sum_{n=1}^\infty e^{-2\alpha \pi y n^2}\vartheta(\frac{y}{2\alpha};nx)
\Big)\\
&=2\sqrt{\frac{y}{2\alpha}}e^{-\pi\alpha y}(-\frac{\partial}{\partial x})
\Big(
\sqrt2\sum_{n=1}^\infty e^{-\alpha\pi y(n^2-1)}\vartheta(\frac{y}{\alpha};nx)\\
&\;\;\;\;-\beta\sum_{n=1}^\infty e^{-\alpha\pi y(2n^2-1)}\vartheta(\frac{y}{2\alpha};nx)
\Big)\\
&=2\sqrt{\frac{y}{2\alpha}}e^{-\pi\alpha y}
\Big(
\sqrt2\sum_{n=1}^\infty ne^{-\alpha\pi y(n^2-1)}\big(-\frac{\partial}{\partial Y}\vartheta(\frac{y}{\alpha};nx)\big)\\
&\;\;\;\;-\beta\sum_{n=1}^\infty ne^{-\alpha\pi y(2n^2-1)}\big(-\frac{\partial}{\partial Y}\vartheta(\frac{y}{2\alpha};nx)\big)
\Big)\\
&=2\sqrt{\frac{y}{2\alpha}}e^{-\pi\alpha y}\mathcal{E}_{\alpha,\beta,x}(z).
\endaligned\end{equation}

\end{proof}

\begin{lemma}\label{Lemma33} For $x\in[0,\frac{1}{3}]$, $z\in\overline{\mathcal{D}_{\mathcal{G}}}$,
 \begin{equation}\nonumber\aligned
\mathcal{E}_{\alpha,\beta,x}(z)\geq
\sin(2\pi x)
 \Big(
 \sqrt2\underline\vartheta(\frac{y}{\alpha})
 -(\beta+\sigma_2) e^{-\alpha\pi y}\overline\vartheta(\frac{y}{2\alpha})-(4\sqrt2e^{-3\alpha\pi y}+\sigma_1)\overline\vartheta(\frac{y}{\alpha})
 \Big),
\endaligned\end{equation}
where
 \begin{equation}\label{sigma12} \aligned
\sigma_1:=\sqrt2\sum_{n=2}^\infty n^2e^{-\alpha\pi y(n^2-1)},
\sigma_2:=\beta\sum_{n=2}^\infty n^2e^{-\alpha\pi y(2n^2-1)}.
\endaligned\end{equation}

\end{lemma}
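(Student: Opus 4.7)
The plan is to decompose each of the two series defining $\mathcal{E}_{\alpha,\beta,x}(z)$ according to whether $n=1$ or $n\ge 2$, keep the leading $n=1$ term of the first sum as a sharp positive lower bound, and control every remaining term by its magnitude. Two ingredients drive the whole argument: the two-sided derivative estimates for $\vartheta(X;Y)$ from Lemmas \ref{LemmaTTT}--\ref{LemmaT2}, and the elementary inequality $|\sin(2\pi n x)|\le n\sin(2\pi x)$, which holds on $[0,\frac{1}{3}]$ because $\sin(2\pi x)\ge 0$ there. The latter is crucial: it lets me factor the common quantity $\sin(2\pi x)$ out of every term, even though $\sin(2\pi n x)$ changes sign for $n\ge 2$ on this interval.

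For the first ($\sqrt 2$-weighted) series, the $n=1$ contribution will be estimated from below by the sharp inequality $-\partial_Y\vartheta(y/\alpha;x)\ge\underline\vartheta(y/\alpha)\sin(2\pi x)$ from Lemma \ref{LemmaTTT} (or Lemma \ref{LemmaT2} in the small-$X$ regime, depending on where $y/\alpha$ sits), producing the leading $\sqrt 2\,\underline\vartheta(y/\alpha)\sin(2\pi x)$. For the tail $n\ge 2$ I pass to absolute values and invoke the universal magnitude bound $|{-\partial_Y\vartheta(X;Y)}|\le \overline\vartheta(X)|\sin(2\pi Y)|$, which together with $|\sin(2\pi n x)|\le n\sin(2\pi x)$ yields
\[
\Bigl|\sqrt 2\, n\, e^{-\alpha\pi y(n^2-1)}\bigl(-\tfrac{\partial}{\partial Y}\vartheta(\tfrac{y}{\alpha};nx)\bigr)\Bigr|
\le \sqrt 2\, n^2\, e^{-\alpha\pi y(n^2-1)}\overline\vartheta(\tfrac{y}{\alpha})\sin(2\pi x).
\]
Isolating the $n=2$ term (which contributes the coefficient $4\sqrt 2\, e^{-3\alpha\pi y}$) from the rest of the tail, which is summed into $\sigma_1$, accounts for the term $-(4\sqrt 2 e^{-3\alpha\pi y}+\sigma_1)\overline\vartheta(y/\alpha)\sin(2\pi x)$ in the claimed inequality.

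The $\beta$-weighted series is treated in the same spirit. Its $n=1$ contribution is bounded in magnitude by $\beta\, e^{-\alpha\pi y}\overline\vartheta(y/(2\alpha))\sin(2\pi x)$, producing the $\beta\, e^{-\alpha\pi y}$ piece. For the $n\ge 2$ tail I use $2n^2-1=1+2(n^2-1)$ to pull out a common factor $e^{-\alpha\pi y}$, apply the same magnitude bound together with $|\sin(2\pi n x)|\le n\sin(2\pi x)$, and obtain a sum whose total is $\sigma_2\, e^{-\alpha\pi y}\overline\vartheta(y/(2\alpha))\sin(2\pi x)$ after the identification built into the definition of $\sigma_2$. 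Collecting the three lower bounds and factoring out $\sin(2\pi x)$ delivers the inequality of the lemma.

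The main obstacle is the sign bookkeeping: because $\sin(2\pi n x)$ oscillates for $n\ge 2$ on $[0,\frac{1}{3}]$, the one-sided bound from Lemmas \ref{LemmaTTT}--\ref{LemmaT2} does not yield a clean lower bound term by term, so one must pay the price of passing to absolute values; this trades the signed inequality for the unavoidable extra factor of $n$, which is the source of the $n^2$ weight in $\sigma_1$ and $\sigma_2$. A secondary check, routine but essential, is that for every $z\in\overline{\mathcal D_{\mathcal G}}$ and every $\alpha\ge 1$, the argument $y/\alpha$ (respectively $y/(2\alpha)$) lies in the range of validity of at least one of Lemmas \ref{LemmaTTT}--\ref{LemmaT2}, which follows from the regime split $X>1/5$ versus $X<\pi/(\pi+2)$ combined with $y>\sqrt 3/2$.
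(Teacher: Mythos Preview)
Your approach is essentially the paper's: isolate $n=1$, bound the leading term of the first series below by $\underline\vartheta$, and control every other contribution in absolute value via $\overline\vartheta$ together with $|\sin(2\pi nx)|\le n\sin(2\pi x)$.

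Two small bookkeeping corrections. First, $\sigma_1$ as defined in the statement already starts at $n=2$ and therefore \emph{contains} the term $4\sqrt2\,e^{-3\alpha\pi y}$; there is nothing to ``isolate.'' Your tail estimate in fact yields simply $-\sigma_1\,\overline\vartheta(y/\alpha)\sin(2\pi x)$ (exactly what the paper's last displayed line records), and the weaker stated form with the additional $4\sqrt2\,e^{-3\alpha\pi y}$ then follows a fortiori. Second, your factoring of the $\beta$-tail is off by one power of $e^{-\alpha\pi y}$: since $\sigma_2=\beta\sum_{n\ge 2}n^2 e^{-\alpha\pi y(2n^2-1)}$ already carries the full exponent $2n^2-1$, pulling out $e^{-\alpha\pi y}$ from the tail and then summing gives back $\sigma_2$ itself, not $\sigma_2\,e^{-\alpha\pi y}$. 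Hence what the argument actually proves is the lower bound with $-\beta e^{-\alpha\pi y}\overline\vartheta(\tfrac{y}{2\alpha})-\sigma_2\,\overline\vartheta(\tfrac{y}{2\alpha})$ in place of $-(\beta+\sigma_2)e^{-\alpha\pi y}\overline\vartheta(\tfrac{y}{2\alpha})$; the paper's own derivation reaches the same point, and the grouping $(\beta+\sigma_2)e^{-\alpha\pi y}$ in the displayed statement (and in the paper's concluding line) appears to be a typographical slip. None of this affects the downstream use of the lemma, since only the main term $\mathcal R_{\alpha,\beta}(y)$ matters and the $\sigma_j$ are lower-order corrections.
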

\begin{proof}

We decompose

 \begin{equation}\nonumber\aligned
\mathcal{E}_{\alpha,\beta,x}(z):
=&\sqrt2(-\frac{\partial}{\partial Y})\vartheta(\frac{y}{\alpha};x)-be^{-\pi\alpha y}(-\frac{\partial}{\partial Y})\vartheta(\frac{y}{2\alpha};x)
+2\sqrt2e^{-3\pi\alpha y}(-\frac{\partial}{\partial Y})\vartheta(\frac{y}{\alpha};x)\\
&+\sqrt2\sum_{n=2}^\infty ne^{-\alpha\pi y(n^2-1)}\big(-\frac{\partial}{\partial Y}\vartheta(\frac{y}{\alpha};nx)\big)
-\beta\sum_{n=2}^\infty ne^{-\alpha\pi y(2n^2-1)}\big(-\frac{\partial}{\partial Y}\vartheta(\frac{y}{2\alpha};nx)\big)\\
=&\mathcal{E}^{a,1}_{\alpha,\beta,x}(z)+\mathcal{E}^{b,1}_{\alpha,\beta,x}(z).
\endaligned\end{equation}
where $\mathcal{E}^{a,1}_{\alpha,\beta,x}(z)$ and $\mathcal{E}^{b,1}_{\alpha,\beta,x}(z)$ are defined at the last equality.

For $\mathcal{E}^{b,1}_{\alpha,\beta,x}(z)$ we estimate as follows
 \begin{equation}\nonumber\aligned
|\mathcal{E}^{b,1}_{\alpha,\beta,x}(z)|
&\leq\sqrt2\sum_{n=2}^\infty ne^{-\alpha\pi y(n^2-1)}\overline\vartheta(\frac{y}{\alpha})|\sin(2\pi nx)|
+\beta\sum_{n=2}^\infty ne^{-\alpha\pi y(2n^2-1)}\overline\vartheta(\frac{y}{2\alpha})|\sin(2\pi nx)|\\
&\leq\sqrt2\sum_{n=2}^\infty n^2e^{-\alpha\pi y(n^2-1)}\overline\vartheta(\frac{y}{\alpha})\sin(2\pi x)
+\beta\sum_{n=2}^\infty n^2e^{-\alpha\pi y(2n^2-1)}\overline\vartheta(\frac{y}{2\alpha})\sin(2\pi x)\\
&\leq\sigma_1\cdot\overline\vartheta(\frac{y}{\alpha})\sin(2\pi x)
+\sigma_2\cdot\overline\vartheta(\frac{y}{2\alpha})\sin(2\pi x),
\endaligned\end{equation}
where $\sigma_1$ and $\sigma_2$ are defined at (\ref{sigma12}).

For $\mathcal{E}^{b,1}_{\alpha,\beta,x}(z)$  we have
 \begin{equation}\nonumber\aligned
\mathcal{E}^{a,1}_{\alpha,\beta,x}(z)
&\geq\sqrt2\underline\vartheta(\frac{y}{\alpha})\sin(2\pi x)-\beta e^{-\pi\alpha y}\overline\vartheta(\frac{y}{2\alpha})\sin(2\pi x)
+2\sqrt2e^{-3\pi\alpha y}\underline\vartheta(\frac{y}{\alpha})\sin(4\pi x)\\
&\geq\sin(2\pi x)\Big(\sqrt2\underline\vartheta(\frac{y}{\alpha})-\beta e^{-\pi\alpha y}\overline\vartheta(\frac{y}{2\alpha})\Big).
\endaligned\end{equation}

Therefore,
 \begin{equation}\nonumber\aligned
\mathcal{E}_{\alpha,\beta,x}(z)
=&\mathcal{E}^{a,1}_{\alpha,\beta,x}(z)+\mathcal{E}^{b,1}_{\alpha,\beta,x}(z)\\
\geq&\sin(2\pi x)
\Big(\sqrt2\underline\vartheta(\frac{y}{\alpha})-(\beta+\sigma_2)e^{-\pi\alpha y}\overline\vartheta(\frac{y}{2\alpha})-\sigma_1\overline\vartheta(\frac{y}{\alpha})\Big).
\endaligned\end{equation}
\end{proof}

\begin{lemma}\label{Lemma34} For $x\in[\frac{1}{3}, \frac{1}{2}]$, $z\in\overline{\mathcal{D}_{\mathcal{G}}}$,

 \begin{equation}\nonumber\aligned
\mathcal{E}_{\alpha,\beta,x}(z)\geq
\sin(2\pi x)
 \Big(
 \sqrt2\underline\vartheta(\frac{y}{\alpha})
 -(\beta+\sigma_4) e^{-\alpha\pi y}\overline\vartheta(\frac{y}{2\alpha})-(4\sqrt2e^{-3\alpha\pi y}+\sigma_3)\overline\vartheta(\frac{y}{\alpha})
 \Big).
\endaligned\end{equation}
Here
 \begin{equation}\label{sigma34}\aligned
\sigma_3:=
\sqrt2\sum_{n=4}^\infty n^2e^{-\alpha\pi y(n^2-1)},
\sigma_4:=\beta\sum_{n=3}^\infty n^2e^{-\alpha\pi y(2n^2-1)}.
\endaligned\end{equation}
\end{lemma}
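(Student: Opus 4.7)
The plan is to mimic the decomposition used in Lemma \ref{Lemma33}, but to re-group the low-order terms so as to match the signs of $\sin(2\pi n x)$ on $x\in[\tfrac13,\tfrac12]$. On this range one has $\sin(2\pi x)\ge 0$, $\sin(4\pi x)\le 0$, $\sin(6\pi x)\ge 0$, since $2\pi x\in[\tfrac{2\pi}{3},\pi]$, $4\pi x\in[\tfrac{4\pi}{3},2\pi]$, $6\pi x\in[2\pi,3\pi]$. I would write $\mathcal E_{\alpha,\beta,x}(z)=\mathcal E^{a,2}_{\alpha,\beta,x}(z)+\mathcal E^{b,2}_{\alpha,\beta,x}(z)$, where $\mathcal E^{a,2}$ retains the $n=1,2,3$ terms of the first sum and the $n=1,2$ terms of the second sum, while $\mathcal E^{b,2}$ is the remaining tail (indices $n\ge 4$ and $n\ge 3$ respectively, matching the definitions of $\sigma_3,\sigma_4$ in \eqref{sigma34}).

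The lower bound on $\mathcal E^{a,2}$ proceeds term by term via Lemma \ref{LemmaTTT}, which is applicable since $z\in\overline{\mathcal D_{\mathcal G}}$ and $\alpha\ge 1$ force $y/\alpha\ge\tfrac{\sqrt 3}{2}>\tfrac15$. The $n=1$ contribution of the first sum is at least $\sqrt2\,\underline\vartheta(y/\alpha)\sin(2\pi x)$, and the $n=1$ contribution of the second sum is at least $-\beta e^{-\alpha\pi y}\overline\vartheta(y/(2\alpha))\sin(2\pi x)$. For $n=3$ of the first sum one has $(-\partial_Y)\vartheta(y/\alpha;3x)\ge 0$ since $\sin(6\pi x)\ge 0$, so this term is dropped as a non-negative contribution; similarly $(-\partial_Y)\vartheta(y/(2\alpha);2x)\le 0$ since $\sin(4\pi x)\le 0$, so after multiplication by $-\beta$ the $n=2$ term of the second sum is non-negative and is dropped. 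The remaining, and key, term is $n=2$ of the first sum: since $\sin(4\pi x)\le 0$ the upper branch of Lemma \ref{LemmaTTT} gives $2\sqrt 2\,e^{-3\alpha\pi y}(-\partial_Y)\vartheta(y/\alpha;2x)\ge 2\sqrt 2\,e^{-3\alpha\pi y}\,\overline\vartheta(y/\alpha)\sin(4\pi x)$, and the identity $\sin(4\pi x)=2\sin(2\pi x)\cos(2\pi x)$ together with $|\cos(2\pi x)|\le 1$ yields $\sin(4\pi x)\ge -2\sin(2\pi x)$, producing the contribution $-4\sqrt 2\,e^{-3\alpha\pi y}\,\overline\vartheta(y/\alpha)\sin(2\pi x)$.

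For the tail $\mathcal E^{b,2}$, I would use the elementary inequality $|\sin(2\pi n x)|\le n\sin(2\pi x)$, valid for every positive integer $n$ since $\sin(2\pi x)\ge 0$ on our range (a one-line induction from $\sin((n+1)\theta)=\sin(n\theta)\cos\theta+\cos(n\theta)\sin\theta$), together with the two-sided estimate $|(-\partial_Y)\vartheta(X;nx)|\le\overline\vartheta(X)|\sin(2\pi n x)|$ from Lemma \ref{LemmaTTT}. This gives $|\mathcal E^{b,2}_{\alpha,\beta,x}(z)|\le\bigl(\sigma_3\,\overline\vartheta(y/\alpha)+\sigma_4\,\overline\vartheta(y/(2\alpha))\bigr)\sin(2\pi x)$. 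Summing the lower bounds for $\mathcal E^{a,2}$ and $\mathcal E^{b,2}$ and grouping the coefficients of $\overline\vartheta(y/\alpha)$ and $\overline\vartheta(y/(2\alpha))$ as in the statement then yields the claimed inequality.

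The main technical point, and what distinguishes this argument from that of Lemma \ref{Lemma33}, is the handling of the $n=2$ term of the first sum: on $[\tfrac13,\tfrac12]$ the factor $\sin(4\pi x)$ has the \emph{wrong} sign, so it cannot be discarded as a non-negative contribution; one must instead absorb it into the main estimate via $\sin(4\pi x)\ge -2\sin(2\pi x)$, and this is precisely what produces the coefficient $4\sqrt 2\,e^{-3\alpha\pi y}$ in front of $\overline\vartheta(y/\alpha)$ in the final bound. Everything else amounts to routine bookkeeping adaptation of the proof of Lemma \ref{Lemma33}.
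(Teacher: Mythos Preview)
Your proposal is correct and matches the paper's proof essentially step for step: the same split $\mathcal E^{a,2}+\mathcal E^{b,2}$, the same discarding of the $n=3$ (first sum) and $n=2$ (second sum) terms by sign, the same handling of the $n=2$ first-sum term via $\sin(4\pi x)\ge -2\sin(2\pi x)$, and the same tail bound via $|\sin(2\pi nx)|\le n\sin(2\pi x)$. One minor slip: the assertion that $\alpha\ge 1$ and $z\in\overline{\mathcal D_{\mathcal G}}$ force $y/\alpha\ge\sqrt3/2$ is false (take $\alpha$ large with $y$ fixed); the paper does not justify the range of $y/\alpha$ at this stage either, but simply uses generic bounds $\underline\vartheta,\overline\vartheta$ and defers the case analysis (Lemma~\ref{LemmaTTT} versus Lemma~\ref{LemmaT2}) to Lemma~\ref{Rab}, so the argument still goes through.
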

\begin{proof}
In this case, $\big(-\frac{\partial}{\partial Y}\vartheta(y;nx)\big)\geq0$ is positive for $n=1,3$, $y>0$ and negative for $n=2$, $y>0$. We then decompose
 \begin{equation}\nonumber\aligned
\mathcal{E}_{\alpha,\beta,x}(z):
=&\sqrt2\sum_{n=1}^\infty ne^{-\alpha\pi y(n^2-1)}\big(-\frac{\partial}{\partial Y}\vartheta(\frac{y}{\alpha};nx)\big)
-\beta\sum_{n=1}^\infty ne^{-\alpha\pi y(2n^2-1)}\big(-\frac{\partial}{\partial Y}\vartheta(\frac{y}{2\alpha};nx)\big)\\
=& \sqrt2e^{-\alpha\pi y}\big(-\frac{\partial}{\partial Y}\vartheta(\frac{y}{\alpha};x)\big)+2\sqrt2e^{-3\alpha\pi y}\big(-\frac{\partial}{\partial Y}\vartheta(\frac{y}{\alpha};2x)\big)+3\sqrt2e^{-8\alpha\pi y}\big(-\frac{\partial}{\partial Y}\vartheta(\frac{y}{\alpha};3x)\big)\\
&
-\beta e^{-\alpha\pi y}\big(-\frac{\partial}{\partial Y}\vartheta(\frac{y}{2\alpha};x)\big)-2\beta e^{-7\alpha\pi y}\big(-\frac{\partial}{\partial Y}\vartheta(\frac{y}{2\alpha};2x)\big)\\
&+\sqrt2\sum_{n=4}^\infty ne^{-\alpha\pi y(n^2-1)}\big(-\frac{\partial}{\partial Y}\vartheta(\frac{y}{\alpha};nx)\big)
-\beta\sum_{n=3}^\infty ne^{-\alpha\pi y(2n^2-1)}\big(-\frac{\partial}{\partial Y}\vartheta(\frac{y}{2\alpha};nx)\big)\\
&:=\mathcal{E}^{a,2}_{\alpha,\beta,x}(z)+\mathcal{E}^{b,2}_{\alpha,\beta,x}(z).
\endaligned\end{equation}
 where $\mathcal{E}^{a,2}_{\alpha,\beta,x}(z)$ and $\mathcal{E}^{b,2}_{\alpha,\beta,x}(z)$ are defined at the last equality.

For $\mathcal{E}^{a,2}_{\alpha,\beta,x}(z)$ we have
 \begin{equation}\nonumber\aligned
\mathcal{E}^{a,2}_{\alpha,\beta,x}(z):
=& \sqrt2\big(-\frac{\partial}{\partial Y}\vartheta(\frac{y}{\alpha};x)\big)+2\sqrt2e^{-3\alpha\pi y}\big(-\frac{\partial}{\partial Y}\vartheta(\frac{y}{\alpha};2x)\big)+3\sqrt2e^{-8\alpha\pi y}\big(-\frac{\partial}{\partial Y}\vartheta(\frac{y}{\alpha};3x)\big)\\
&
-\beta e^{-\alpha\pi y}\big(-\frac{\partial}{\partial Y}\vartheta(\frac{y}{2\alpha};x)\big)-2\beta e^{-7\alpha\pi y}\big(-\frac{\partial}{\partial Y}\vartheta(\frac{y}{2\alpha};2x)\big).
\endaligned\end{equation}

For $\mathcal{E}^{b,2}_{\alpha,\beta,x}(z)$,
 \begin{equation}\nonumber\aligned
\mathcal{E}^{b,2}_{\alpha,\beta,x}(z):
=\sqrt2\sum_{n=4}^\infty ne^{-\alpha\pi y(n^2-1)}\big(-\frac{\partial}{\partial Y}\vartheta(\frac{y}{\alpha};nx)\big)
-\beta\sum_{n=3}^\infty ne^{-\alpha\pi y(2n^2-1)}\big(-\frac{\partial}{\partial Y}\vartheta(\frac{y}{2\alpha};nx)\big).
\endaligned\end{equation}

A lower bound of  $\mathcal{E}^{a,2}_{\alpha,\beta,x}(z)$ yields
 \begin{equation}\nonumber\aligned
\mathcal{E}^{a,2}_{\alpha,\beta,x}(z):
=& \sqrt2\big(-\frac{\partial}{\partial Y}\vartheta(\frac{y}{\alpha};x)\big)+2\sqrt2e^{-3\alpha\pi y}\big(-\frac{\partial}{\partial Y}\vartheta(\frac{y}{\alpha};2x)\big)+3\sqrt2e^{-8\alpha\pi y}\big(-\frac{\partial}{\partial Y}\vartheta(\frac{y}{\alpha};3x)\big)\\
&
-\beta e^{-\alpha\pi y}\big(-\frac{\partial}{\partial Y}\vartheta(\frac{y}{2\alpha};x)\big)-2\beta e^{-7\alpha\pi y}\big(-\frac{\partial}{\partial Y}\vartheta(\frac{y}{2\alpha};2x)\big)\\
\geq&
 \sqrt2\big(-\frac{\partial}{\partial Y}\vartheta(\frac{y}{\alpha};x)\big)+2\sqrt2e^{-3\alpha\pi y}\big(-\frac{\partial}{\partial Y}\vartheta(\frac{y}{\alpha};2x)\big)
 -\beta e^{-\alpha\pi y}\big(-\frac{\partial}{\partial Y}\vartheta(\frac{y}{2\alpha};x)\big)\\
 \geq&
 \sqrt2\underline\vartheta(\frac{y}{\alpha})\sin(2\pi x)+2\sqrt2e^{-3\alpha\pi y}\overline\vartheta(\frac{y}{\alpha})\sin(4\pi x)
 -\beta e^{-\alpha\pi y}\overline\vartheta(\frac{y}{2\alpha})\sin(2\pi x)\\
 \geq&
 \sin(2\pi x)
 \Big(
 \sqrt2\underline\vartheta(\frac{y}{\alpha})
 -\beta e^{-\alpha\pi y}\overline\vartheta(\frac{y}{2\alpha})-4\sqrt2e^{-3\alpha\pi y}\overline\vartheta(\frac{y}{\alpha})
 \Big).
\endaligned\end{equation}

A upper bound of $\mathcal{E}^{b,2}_{\alpha,\beta,x}(z)$ is given by
 \begin{equation}\nonumber\aligned
|\mathcal{E}^{b,2}_{\alpha,\beta,x}(z)|
&\leq\sqrt2\sum_{n=4}^\infty ne^{-\alpha\pi y(n^2-1)}\overline\vartheta(\frac{y}{\alpha})|\sin(2n\pi x)|
+\beta\sum_{n=3}^\infty ne^{-\alpha\pi y(2n^2-1)}\overline\vartheta(\frac{y}{2\alpha})|\sin(2n\pi x)|\\
&\leq\sin(2\pi x)
\Big(
\sqrt2\sum_{n=4}^\infty n^2e^{-\alpha\pi y(n^2-1)}\overline\vartheta(\frac{y}{\alpha})
+\beta\sum_{n=3}^\infty n^2e^{-\alpha\pi y(2n^2-1)}\overline\vartheta(\frac{y}{2\alpha})
\Big)\\
&\leq\sin(2\pi x)
\Big(
\sigma_3\cdot\overline\vartheta(\frac{y}{\alpha})+\sigma_4\cdot\overline\vartheta(\frac{y}{2\alpha})
\Big).
\endaligned\end{equation}
Here $\sigma_3$ and $\sigma_4$ are defined at (\ref{sigma34}).

Combining all the estimates we get
 \begin{equation}\nonumber\aligned
\mathcal{E}_{\alpha,\beta,x}(z)
&=\mathcal{E}^{a,2}_{\alpha,\beta,x}(z)+\mathcal{E}^{b,2}_{\alpha,\beta,x}(z)\\
&\geq
\sin(2\pi x)
 \Big(
 \sqrt2\underline\vartheta(\frac{y}{\alpha})
 -(\beta+\sigma_4) e^{-\alpha\pi y}\overline\vartheta(\frac{y}{2\alpha})-(4\sqrt2e^{-3\alpha\pi y}+\sigma_3)\overline\vartheta(\frac{y}{\alpha})
 \Big).
\endaligned\end{equation}

\end{proof}

\subsection{The estimates of the lower bound of a useful function and completing of the proof}
\vskip0.2in
In view of Lemma \ref{Lemma33} and \ref{Lemma34}. We
define
 \begin{equation}\nonumber\aligned
\mathcal{R}_{\alpha,\beta}(y):
=
 \Big(
 \sqrt2\underline\vartheta(\frac{y}{\alpha})
 -\beta e^{-\alpha\pi y}\overline\vartheta(\frac{y}{2\alpha})-4\sqrt2e^{-3\alpha\pi y}\overline\vartheta(\frac{y}{\alpha})
 \Big).
\endaligned\end{equation}

\begin{lemma}\label{Rab} For $\beta<\beta_0$, and $\forall\alpha\geq1, y\geq\frac{\sqrt3}{2}$,
$$
\mathcal{R}_{\alpha,\beta}(y)>0.
$$

\end{lemma}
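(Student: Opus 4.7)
The plan is to rearrange the target $\mathcal{R}_{\alpha,\beta}(y)>0$ into the equivalent bound $\beta<\Psi(\alpha,y)$ with
\[
\Psi(\alpha,y):=\frac{\sqrt{2}\,\underline\vartheta(y/\alpha)\;-\;4\sqrt{2}\,e^{-3\alpha\pi y}\,\overline\vartheta(y/\alpha)}{e^{-\alpha\pi y}\,\overline\vartheta(y/(2\alpha))},
\]
and to establish $\Psi(\alpha,y)\geq\beta_0$ uniformly on $\{\alpha\geq 1,\ y\geq \sqrt{3}/2\}$. The three entries in the definition \eqref{beta0} of $\beta_0$ correspond to three subregimes of this $(\alpha,y)$-region in which different combinations of Lemma~\ref{LemmaTTT} (valid for $X>1/5$) and Lemma~\ref{LemmaT2} (valid for $X<\pi/(\pi+2)$) give usable pointwise bounds on $\underline\vartheta$ and $\overline\vartheta$. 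Writing $X_1:=y/\alpha$ and $X_2:=X_1/2=y/(2\alpha)$, the key structural fact I would rely on is the identity
\[
\alpha y-\tfrac{1}{2}X_1 \;=\; y\Bigl(\alpha-\tfrac{1}{2\alpha}\Bigr) \;\geq\; \tfrac{y}{2} \;\geq\; \tfrac{\sqrt{3}}{4},
\]
with equality iff $(\alpha,y)=(1,\sqrt{3}/2)$ (the hexagonal corner).

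In the regime $X_1\geq 1/2$ (so also $X_2\geq 1/4>1/5$), Lemma~\ref{LemmaTTT} applies to both $\underline\vartheta(X_1)$ and $\overline\vartheta(X_2)$. Substituting its bounds and using monotonicity of $\mu$ to replace $\mu(X_1),\mu(X_2)$ by the reference values $\mu(1/2),\mu(1/4)$, the main piece of $\Psi$ is bounded below by $\sqrt{2}\,e^{\pi(\alpha y-X_1/2)}\,(1-\mu(1/2))/(1+\mu(1/4))$, which by the identity above is minimized at $(\alpha,y)=(1,\sqrt{3}/2)$. Controlling the subtracted term $4\sqrt{2}e^{-3\alpha\pi y}\overline\vartheta(X_1)/[e^{-\alpha\pi y}\overline\vartheta(X_2)]$ by the same uniform monotone bounds, this subregime yields the second branch of $\beta_0$. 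An entirely analogous computation on a second range of $X_1$, with a different choice of reference values (producing the constants $e^\pi$, $e^{-7\pi/2}$ and $\mu(1/4)$ in the resulting estimate), supplies the third branch.

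In the complementary small-$X_1$ regime Lemma~\ref{LemmaT2} is the appropriate tool: the polynomial prefactors $X_1^{-3/2}$ and $X_2^{-3/2}$ combine cleanly because $X_2=X_1/2$, and the main ratio reduces to the simple form $\tfrac{\pi}{\sqrt{2}}\,e^{\pi(\alpha y-1/(4X_1))}$. One then optimizes $\alpha y-\alpha/(4y)$ over $\alpha\geq 1,\ y\geq \sqrt{3}/2$ subject to $X_1=y/\alpha$ lying in the Lemma~\ref{LemmaT2} range; this elementary two-variable exercise produces the first branch of $\beta_0$. In each subregime one also must verify that the numerator of $\Psi$ is positive, which is automatic since $4\sqrt{2}e^{-3\alpha\pi y}\overline\vartheta(X_1)$ is exponentially small compared to $\sqrt{2}\,\underline\vartheta(X_1)$.

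The main obstacle is the genuinely two-variable nature of the optimization: $(\alpha,y)$ enter $\Psi$ through two essentially independent combinations $\alpha y$ and $y/\alpha$, so the worst-case point is not a priori a corner of the parameter rectangle. The identity $\alpha y-X_1/2\geq\sqrt{3}/4$, attained only at the hexagonal corner, is precisely what forces each Lemma~\ref{LemmaTTT} sub-minimum to sit at $(1,\sqrt{3}/2)$, and a similar but separate optimization of $\alpha y-\alpha/(4y)$ pins down the Lemma~\ref{LemmaT2} extremal point. Once the three branch values are obtained, the claim reduces to the numerical check that each exceeds $\sqrt{2}$ (indeed $\beta_0\approx 3.80$), after which $\mathcal{R}_{\alpha,\beta}(y)>0$ follows for every $\beta<\beta_0$.
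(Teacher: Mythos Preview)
Your proposal is essentially the same approach as the paper's proof: the paper also splits into three subregimes according to the value of $X_1=y/\alpha$ (namely $X_1\le\tfrac14$, $\tfrac14\le X_1\le\tfrac12$, and $X_1\ge\tfrac12$), applying Lemma~\ref{LemmaT2} to both arguments in the first case, Lemma~\ref{LemmaTTT} to both in the last, and a mix in the middle case, and then reduces each to an explicit $\beta$-threshold exactly as you describe. Two small corrections: in the small-$X_1$ regime your prefactor should be $\pi/2$, not $\pi/\sqrt2$ (since $X_2^{-3/2}=(X_1/2)^{-3/2}=2\sqrt2\,X_1^{-3/2}$); and for the ``second range'' you should say explicitly that Lemma~\ref{LemmaTTT} is used for $\underline\vartheta(X_1),\overline\vartheta(X_1)$ while Lemma~\ref{LemmaT2} is used for $\overline\vartheta(X_2)$ --- this mixing, together with the constraint $2y\le\alpha\le 4y$ that the middle range imposes (giving $y(\alpha-1/\alpha)\ge 1$ and $(y/(2\alpha))^{-3/2}\le 64$), is what produces the constants $e^{\pi}$, $e^{-7\pi/2}$, $\mu(1/4)$ in the third branch.
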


\begin{proof} We divide its proof to three cases.

\noindent
 {\bf Case A: $\frac{y}{\alpha},\frac{y}{2\alpha}\in(0,\frac{1}{4}]$.} In this case, $\alpha\geq4y\geq2\sqrt3$.
\begin{equation}\nonumber\aligned
\mathcal{R}_{\alpha,\beta}(y)&
=
 \Big(
 \sqrt2\underline\vartheta(\frac{y}{\alpha})
 -\beta e^{-\alpha\pi y}\overline\vartheta(\frac{y}{2\alpha})-4\sqrt2e^{-3\alpha\pi y}\overline\vartheta(\frac{y}{\alpha})
 \Big)\\
 &\geq
 \sqrt2\pi e^{-\frac{\pi\alpha}{4y}}(\frac{y}{\alpha})^{-\frac{3}{2}}-\beta e^{-\pi\alpha y}(\frac{y}{2\alpha})^{-\frac{3}{2}}
 -4\sqrt2 e^{-3\pi\alpha y}(\frac{y}{\alpha})^{-\frac{3}{2}}\\
 &=
 e^{-\frac{\pi\alpha}{4y}}(\frac{y}{\alpha})^{-\frac{3}{2}}
 \Big(
 \sqrt2\pi-2\sqrt2\beta e^{-\pi\alpha(y-\frac{1}{4y})}-4\sqrt2e^{-\pi\alpha(3y-\frac{1}{4y})}
 \Big)
\endaligned\end{equation}
Trivially $y-\frac{1}{4y}\geq\frac{\sqrt3}{3}$ and $3y-\frac{1}{4y}\geq\frac{4\sqrt3}{3}$ since $y\geq\frac{\sqrt3}{2}$.
Hence
\begin{equation}\nonumber\aligned
\mathcal{R}_{\alpha,\beta}(y)
 &\geq
 e^{-\frac{\pi\alpha}{4y}}(\frac{y}{\alpha})^{-\frac{3}{2}}
 \Big(
 \sqrt2\pi-2\sqrt2\beta e^{-\pi\alpha(y-\frac{1}{4y})}-4\sqrt2e^{-\pi\alpha(3y-\frac{1}{4y})}
 \Big)\\
 &\geq
 e^{-\frac{\pi\alpha}{4y}}(\frac{y}{\alpha})^{-\frac{3}{2}}
 \Big(
 \sqrt2\pi-2\sqrt2\beta e^{-\pi\alpha\frac{\sqrt3}{3}}-4\sqrt2 e^{-\pi\alpha\frac{4\sqrt3}{3}}
 \Big)\\
 &\geq
 e^{-\frac{\pi\alpha}{4y}}(\frac{y}{\alpha})^{-\frac{3}{2}}
 \Big(
 \sqrt2\pi-2\sqrt2\beta e^{-2\pi}-4\sqrt2 e^{-8\pi}
 \Big)\\
 &>0\;\;\hbox{if}\;\; \beta<\frac{\pi e^{2\pi}-4e^{-6\pi}}{2}.
\endaligned\end{equation}

\noindent
 {\bf Case B: $\frac{y}{\alpha},\frac{y}{2\alpha}\in[\frac{1}{4},\infty)$.} In this case, $\frac{y}{\alpha}\geq\frac{1}{2}$ and there holds

\begin{equation}\nonumber\aligned
\mathcal{R}_{\alpha,\beta}(y)&
=
 \Big(
 \sqrt2\underline\vartheta(\frac{y}{\alpha})
 -\beta e^{-\alpha\pi y}\overline\vartheta(\frac{y}{2\alpha})-4\sqrt2e^{-3\alpha\pi y}\overline\vartheta(\frac{y}{\alpha})
 \Big)\\
 &\geq\Big(
 4\sqrt2\pi e^{-\pi\frac{y}{\alpha}}\big(1-\mu(\frac{y}{\alpha})\big)
 -4\pi\beta e^{-\pi\alpha y}e^{-\pi\frac{y}{2\alpha}}\big(1+\mu(\frac{y}{2\alpha})\big)\\
 &\;\;\;\;\;\;\;-16\sqrt2\pi\beta e^{-3\pi\alpha y}e^{-\pi\frac{y}{\alpha}}\big(1+\mu(\frac{y}{\alpha})\big)
 \Big)\\
 &=4\pi e^{-\frac{y}{\alpha}}
 \Big(
 \sqrt2\big(1-\mu(\frac{y}{\alpha})\big)-\beta e^{-\pi y(\alpha-\frac{1}{2\alpha})}\big(1+\mu(\frac{y}{2\alpha})\big)
 -4\sqrt2 e^{-3\pi\alpha y}\big(1+\mu(\frac{y}{\alpha})\big)
 \Big)
\endaligned\end{equation}
Trivially $e^{-\pi y(\alpha-\frac{1}{2\alpha})}\leq e^{-\frac{\sqrt3\pi}{4}}$, $\mu(\frac{y}{\alpha})\leq\mu(\frac{1}{2})$ and $\mu(\frac{y}{2\alpha})\leq\mu(\frac{1}{4})$. Hence we have

\begin{equation}\nonumber\aligned
\mathcal{R}_{\alpha,\beta}(y)
 &\geq4\pi e^{-\frac{y}{\alpha}}
 \Big(
 \sqrt2\big(1-\mu(\frac{y}{\alpha})\big)-\beta e^{-\pi y(\alpha-\frac{1}{2\alpha})}\big(1+\mu(\frac{y}{2\alpha})\big)
 -4\sqrt2 e^{-3\pi\alpha y}\big(1+\mu(\frac{y}{\alpha})\big)
 \Big)\\
 &\geq
 4\pi e^{-\frac{y}{\alpha}}
 \Big(
 \sqrt2\big(1-\mu(\frac{1}{2})\big)-\beta e^{-\frac{\sqrt3\pi}{4}}\big(1+\mu(\frac{1}{4})\big)
 -4\sqrt2 e^{-\frac{3\sqrt3\pi}{2}}\big(1+\mu(\frac{1}{2})\big)
 \Big)\\
 &>0\;\;\hbox{if}\;
 \beta<\frac{\sqrt2 e^{\frac{\sqrt3\pi}{4}}(1-\mu(\frac{1}{2}))-4e^{-\frac{5\sqrt3\pi}{4}}(1+\mu(\frac{1}{4}))}{1+\mu(\frac{1}{4})}.
\endaligned\end{equation}

\noindent
 {\bf Case C: $\frac{y}{\alpha}\in[\frac{1}{4},\infty),\frac{y}{2\alpha}\in(0,\frac{1}{4}]$.} In this case, $2y\leq\alpha\leq4y$ and we have

\begin{equation}\nonumber\aligned
\mathcal{R}_{\alpha,\beta}(y)&
=
 \Big(
 \sqrt2\underline\vartheta(\frac{y}{\alpha})
 -\beta e^{-\alpha\pi y}\overline\vartheta(\frac{y}{2\alpha})-4\sqrt2e^{-3\alpha\pi y}\overline\vartheta(\frac{y}{\alpha})
 \Big)\\
 &\geq
 4\sqrt2\pi e^{-\pi\frac{y}{\alpha}}\big(1-\mu(\frac{y}{\alpha})\big)-\beta e^{-\pi\alpha y}(\frac{y}{2\alpha})^{-\frac{3}{2}}
 -16\sqrt2\pi e^{-3\pi\alpha y}e^{-\pi\frac{y}{\alpha}}\big(1+\mu(\frac{y}{\alpha})\big)\\
 &=e^{-\pi\frac{y}{\alpha}}
 \Big(
 4\sqrt2\pi\big(1-\mu(\frac{y}{\alpha})\big)-\beta e^{-\pi y(\alpha-\frac{1}{\alpha})}(\frac{y}{2\alpha})^{-\frac{3}{2}}
 -16\sqrt2\pi e^{-3\pi\alpha y}\big(1+\mu(\frac{y}{\alpha})\big)
 \Big).
\endaligned\end{equation}
Trivially $y(\alpha-\frac{1}{\alpha})\geq1$ and $(\frac{y}{2\alpha})^{-\frac{3}{2}}\leq64$, then

\begin{equation}\nonumber\aligned
\mathcal{R}_{\alpha,\beta}(y)
 &\geq e^{-\pi\frac{y}{\alpha}}
 \Big(
 4\sqrt2\pi\big(1-\mu(\frac{y}{\alpha})\big)-\beta e^{-\pi y(\alpha-\frac{1}{\alpha})}(\frac{y}{2\alpha})^{-\frac{3}{2}}
 -16\sqrt2\pi e^{-3\pi\alpha y}\big(1+\mu(\frac{y}{\alpha})\big)
 \Big)\\
 &\geq e^{-\pi\frac{y}{\alpha}}
 \Big(
 4\sqrt2\pi\big(1-\mu(\frac{1}{4})\big)-64 e^{-\pi}\beta-16\sqrt2\pi e^{-\frac{9\pi}{2}}\big(1+\mu(\frac{1}{4})\big)
 \Big)\\
 &>0\;\;\hbox{if}\;\;\beta<\frac{4\sqrt2\pi e^\pi\big(1-\mu(\frac{1}{4})\big)-16\sqrt2\pi e^{-\frac{7\pi}{2}}\big(1+\mu(\frac{1}{4})\big)}{64}.
\endaligned\end{equation}

\end{proof}

Finally we complete the proof of Theorem \ref{2Th3}.

\begin{proof}
Combining Lemma \ref{Rab} with Lemmas \ref{Lemma33} and \ref{Lemma34}, yield that
 \begin{equation}\label{Positive}\aligned
\mathcal{E}_{\alpha,\beta,x}(z)
\begin{cases}
>0,\;\;\hbox{for}\;\; x\in(0,\frac{1}{3}), \alpha\geq1, y\geq\frac{\sqrt3}{2},\\
>0,\;\;\hbox{for}\;\; x\in[\frac{1}{3},\frac{1}{2}), \alpha\geq1, y\geq\frac{\sqrt3}{2},
\end{cases}
\endaligned\end{equation}
where $\mathcal{E}_{\alpha,\beta,x}(z)$ is defined in Lemma \ref{Lemma32}.

Since trivially
 \begin{equation}\label{Positive2}\aligned
{\mathcal{D}_{\mathcal{G}}}\subseteq\Big(\{z|: x\in[0,\frac{1}{3}], y\geq\frac{\sqrt3}{2}\}\cup\{z|: x\in[\frac{1}{3},\frac{1}{2}], y\geq\frac{\sqrt3}{2}\}\Big).
\endaligned\end{equation}
\eqref{Positive} and \eqref{Positive2} yield
\begin{equation}\label{Positive3}\aligned
\mathcal{E}_{\alpha,\beta,x}(z)
>0,\;\;\hbox{for}\;\; \alpha\geq1\;\;\hbox{and}\;\;z\in {\mathcal{D}_{\mathcal{G}}}.
\endaligned\end{equation}
Therefore, \eqref{Positive3} and Lemma \ref{Lemma32} complete the proof of Theorem \ref{2Th3}.

\end{proof}

\section{The monotonicity on the vertical line $y=\frac{1}{2}$ and proof of Theorem \ref{Th1}}
\setcounter{equation}{0}

In Theorem \ref{2Th1}, we have established that for $\alpha\geq1, \beta<\beta_0\cong3.8$,
\begin{equation}\aligned\label{3hhh}
\min_{z\in\mathbb{H}}\Big(\theta(\alpha;z)-\beta\theta(2\alpha;z)\Big)
=\min_{z\in\Gamma}\Big(\theta(\alpha;z)-\beta\theta(2\alpha;z)\Big),
\endaligned\end{equation}
where $\Gamma$ is a vertical line and defined at \eqref{Gfff}.

The following Lemma, which proves the non-existence part of Theorem \ref{Th1},  shows  that one only needs to consider the minimum of $\Big(\theta(\alpha;z)-\beta\theta(2\alpha;z)\Big)$
for $\beta\leq\sqrt2$.
\begin{lemma}\label{Lemma41} The minimum of $\Big(\theta(\alpha;z)-\beta\theta(2\alpha;z)\Big)$ does not exist if $\beta>\sqrt2$.
\end{lemma}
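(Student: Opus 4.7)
The plan is to show that when $\beta>\sqrt{2}$ the functional is unbounded from below along a suitably chosen sequence in $\mathbb{H}$, whence no minimizer can exist. The natural sequence is $z=iy$ with $y\to +\infty$ (equivalently, thin rectangular lattices), along which each theta function has a clean asymptotic.

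First I would extract the leading behaviour of $\theta(\alpha;z)$ as $y\to\infty$ from the Poisson/Jacobi expansion of Lemma \ref{Lemma31},
\begin{equation}\nonumber
\theta(\alpha;z)=\sqrt{\tfrac{y}{\alpha}}\,\vartheta\!\left(\tfrac{y}{\alpha};0\right)+2\sqrt{\tfrac{y}{\alpha}}\sum_{n=1}^{\infty}e^{-\alpha\pi y n^{2}}\vartheta\!\left(\tfrac{y}{\alpha};nx\right).
\end{equation}
For $z=iy$ we have $x=0$, so $\vartheta(y/\alpha;nx)=\vartheta(y/\alpha;0)=1+2\sum_{m\ge 1}e^{-\pi m^{2}y/\alpha}\to 1$ as $y\to\infty$, and the tail series in $n$ is bounded by $C\sum_{n\ge 1}e^{-\alpha\pi y n^{2}}\le C'e^{-\alpha\pi y}$. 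Therefore
\begin{equation}\nonumber
\theta(\alpha;iy)=\sqrt{\tfrac{y}{\alpha}}\bigl(1+O(e^{-\pi y/\alpha})\bigr)+O\!\bigl(\sqrt{y}\,e^{-\alpha\pi y}\bigr),\qquad y\to\infty,
\end{equation}
and the analogous identity holds with $\alpha$ replaced by $2\alpha$, giving $\theta(2\alpha;iy)=\sqrt{y/(2\alpha)}+o(1)$.

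Substituting these into the functional yields
\begin{equation}\nonumber
\theta(\alpha;iy)-\beta\,\theta(2\alpha;iy)=\sqrt{\tfrac{y}{\alpha}}\left(1-\tfrac{\beta}{\sqrt{2}}\right)+o(1),\qquad y\to\infty.
\end{equation}
Since $\beta>\sqrt{2}$ makes the coefficient $1-\beta/\sqrt{2}$ strictly negative, the right-hand side tends to $-\infty$. Hence $\inf_{\mathbb{H}}(\theta(\alpha;z)-\beta\theta(2\alpha;z))=-\infty$, and in particular the minimum is not attained.

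The argument is essentially an asymptotic computation; the only mildly delicate point is making sure that the error terms in the Poisson expansion can be controlled uniformly in $y$ large (so that the coefficient of $\sqrt{y/\alpha}$ truly dominates). This is immediate from the exponential decay of the tails above, so I do not anticipate any real obstacle. Note also that the same conclusion could be reached by invoking the standard identity $\lim_{y\to\infty}y^{-1/2}\theta(\alpha;iy)=1/\sqrt{\alpha}$, which is the classical volume/density asymptotic of the Epstein zeta/theta series.
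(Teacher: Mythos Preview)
Your proposal is correct and follows essentially the same route as the paper: both arguments use the Poisson/Jacobi expansion (Lemma~\ref{Lemma31} / Lemma~\ref{Lemma409}) to extract the leading term $\theta(\alpha;z)\sim\sqrt{y/\alpha}$ as $y\to\infty$, then compute $\theta(\alpha;z)-\beta\theta(2\alpha;z)=\sqrt{y/\alpha}\,(1-\beta/\sqrt{2})+o(1)$ (the paper writes this equivalently as $\sqrt{y/(2\alpha)}(\sqrt{2}-\beta+o(1))$) and conclude unboundedness below for $\beta>\sqrt{2}$. The only cosmetic difference is that you specialize to $x=0$ while the paper keeps $x$ arbitrary, but the asymptotics are identical.
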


We postpone the proof of Lemma \ref{Lemma41} to the late section.  Combining \eqref{3hhh} of  Theorem \ref{2Th1} and Lemma \ref{Lemma41}, it suffices to consider minimization problem on a vertical line $\Gamma$ for $\beta\leq\sqrt2$. For this, we establish the following, which proves the first part of Theorem \ref{Th1}.

\begin{theorem}\label{4Th1} Assume that $\alpha\geq1$. For $\beta\leq\sqrt2$,
\begin{equation}\aligned\nonumber
\min_{z\in\Gamma}\Big(\theta(\alpha;z)-\beta\theta(2\alpha;z)\Big)\;\;\hbox{is achieved uniquely at }\;\;\frac{1}{2}+i\frac{\sqrt3}{2}.
\endaligned\end{equation}
\end{theorem}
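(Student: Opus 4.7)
The plan has two stages: reduce from arbitrary $\beta\le\sqrt 2$ to the single critical value $\beta=\sqrt 2$, then prove a one-variable monotonicity statement on $\Gamma$.

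For the reduction, $\beta\le 0$ is immediate, since $\theta(\alpha;z)-\beta\theta(2\alpha;z)$ is then a nonnegative combination of two Montgomery theta functions, each minimized on $\mathbb H$ (hence on $\Gamma$) at $z_h=\tfrac12+i\tfrac{\sqrt 3}{2}$. For $0<\beta<\sqrt 2$ I would use the decomposition
\[
\theta(\alpha;z)-\beta\theta(2\alpha;z)=\bigl[\theta(\alpha;z)-\sqrt 2\,\theta(2\alpha;z)\bigr]+(\sqrt 2-\beta)\theta(2\alpha;z),
\]
whose second summand has nonnegative coefficient and is minimized at $z_h$ by Montgomery; hence once the critical case $\beta=\sqrt 2$ is established, the full range $\beta\le\sqrt 2$ follows.

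For the critical case, set $F(y):=\theta(\alpha;\tfrac12+iy)-\sqrt 2\,\theta(2\alpha;\tfrac12+iy)$ on $[\sqrt 3/2,\infty)$. Plugging the Poisson-type expansion of Lemma~\ref{Lemma31} into $F$, the ``$n=0$'' contributions satisfy $\sqrt{y/\alpha}\cdot 1=\sqrt 2\cdot\sqrt{y/(2\alpha)}\cdot 1$ and cancel exactly --- this is precisely why $\sqrt 2$ is the critical threshold --- leaving $F(y)=\sqrt{y/\alpha}\,\mathcal R(\alpha,y)$ where
\[
\mathcal R(\alpha,y)=\vartheta(\tfrac{y}{\alpha};0)-\vartheta(\tfrac{y}{2\alpha};0)+2\sum_{n\ge 1}\Big[e^{-\alpha\pi y n^2}\vartheta(\tfrac{y}{\alpha};\tfrac{n}{2})-e^{-2\alpha\pi y n^2}\vartheta(\tfrac{y}{2\alpha};\tfrac{n}{2})\Big].
\]
I would then pursue the stronger monotonicity $F'(y)>0$ on $(\sqrt 3/2,\infty)$; the endpoint condition $F'(\sqrt 3/2)=0$ is automatic, since Montgomery's theorem makes $z_h$ a global minimum (in particular a critical point) of each of $\theta(\alpha;\cdot)$ and $\theta(2\alpha;\cdot)$. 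Mirroring Section~3, I would differentiate the series above term by term, isolate the leading $n=1$ contribution, bound it from below using the Jacobi estimates of Lemmas~\ref{LemmaTTT}--\ref{LemmaT2} (now applied in the $y$-variable rather than the $Y$-variable), and absorb the $n\ge 2$ terms into $\mu(\cdot)$-type tail bounds. The argument should split into two regimes according to the size of $y/\alpha$, one handled by the small-$X$ Jacobi bound of Lemma~\ref{LemmaT2} and the other by the large-$X$ bound of Lemma~\ref{LemmaTTT}, exactly as in Section~3.

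The main obstacle is the very cancellation that makes the theorem true. With the leading $\sqrt{y/\alpha}$ terms gone, the sign of $F'(y)$ is governed by a delicate competition between two exponentially small series of different frequencies ($\alpha$ and $2\alpha$); unlike the transversal analysis of Section~3, there is no ``$\sin(2\pi x)$'' factor to grant an a priori sign, so positivity must be extracted from a sharp term-by-term comparison that is held uniformly over $\alpha\ge 1$. Establishing a uniform positive lower bound on $F'$ on $(\sqrt 3/2,\infty)$, and thereby ruling out any interior critical point of $F$, is the analytic crux --- parallel in spirit to Lemma~\ref{Rab}, which plays the same role for the transversal direction.
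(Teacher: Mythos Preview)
Your reduction to the critical case $\beta=\sqrt 2$ is correct and matches the paper exactly. The gap is in your handling of the critical case near the endpoint $y=\sqrt 3/2$. You propose to prove $F'(y)>0$ on $(\sqrt 3/2,\infty)$ by isolating a leading term and absorbing tails, and you speak of ``establishing a uniform positive lower bound on $F'$''. But no such bound exists: you yourself note that $F'(\sqrt 3/2)=0$, so near the endpoint the would-be leading term and the tail are of the same order and a first-order term-by-term comparison cannot separate them. The paper's resolution is a second-order trick you do not mention: on the range $y\in[\sqrt 3/2,\,1.8\alpha]$ it proves
\[
\Big(\frac{\partial^2}{\partial y^2}+\frac{2}{y}\,\frac{\partial}{\partial y}\Big)F(y)>0,
\]
equivalently $\partial_y\bigl(y^2 F'(y)\bigr)>0$, and then combines this with $F'(\sqrt 3/2)=0$ to conclude $F'(y)>0$ there (Lemmas~\ref{Lemma42}--\ref{Lemma44}). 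Moreover, this second-order quantity is not analyzed via the Poisson expansion of Lemma~\ref{Lemma31} at all, but via the explicit double-sum identity of Lemma~\ref{Lemma413}, which after splitting $(m,n)$ by parity produces positive leading contributions with controllable remainders (Lemmas~\ref{Lemma414}--\ref{Lemma417}).

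A secondary issue: the Jacobi bounds of Lemmas~\ref{LemmaTTT}--\ref{LemmaT2} control $\partial_Y\vartheta(X;Y)$, not the $X$-derivative that appears when you differentiate in $y$, so they are not the right tool here even in principle. In the complementary regime $y\ge 1.15\alpha$, where the paper \emph{does} estimate $F'(y)$ directly (Lemmas~\ref{Lemma45}--\ref{Lemma46}), it instead uses the expansion of Lemma~\ref{Lemma409} and analyzes $\partial_y\bigl(\sqrt y\,(\vartheta_3(y/\alpha)-\vartheta_3(y/2\alpha))\bigr)$ by hand (Lemma~\ref{Lemma410}), treating the remaining series as a small perturbation (Lemma~\ref{Lemma411}).
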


Since $\frac{\partial}{\partial y}\theta(\alpha;\frac{1}{2}+i y)\geq0$ for $y\geq\frac{\sqrt3}{2}$ with equality attained at $\frac{\sqrt3}{2}$ $($\cite{Mon1988}$)$, it suffices to prove
the critical case of Theorem \ref{4Th1}, namely,
\begin{theorem}\label{4Th2} Assume that $\alpha\geq1$ and $\beta=\sqrt{2}$. Then
\begin{equation}\aligned\nonumber
\min_{z\in\Gamma}\Big(\theta(\alpha;z)-\sqrt2\theta(2\alpha;z)\Big)\;\;\hbox{is achieved uniquely at }\;\;\frac{1}{2}+i\frac{\sqrt3}{2}.
\endaligned\end{equation}
\end{theorem}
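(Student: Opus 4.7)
I would prove the theorem by establishing the strict monotonicity $F'(y) > 0$ for $y > \tfrac{\sqrt 3}{2}$, where $F(y):=\theta(\alpha;\tfrac12+iy)-\sqrt 2\,\theta(2\alpha;\tfrac12+iy)$, from which $F(y)>F(\tfrac{\sqrt 3}{2})$ follows by integration.

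\textbf{Step 1: A theta-block representation on the line $x=\tfrac12$.} Splitting the lattice sum $\sum_{(m,n)\in\mathbb Z^2}$ by the parity of $m$ (equivalently, specializing Lemma~\ref{Lemma31} to $x=\tfrac12$ and using $\vartheta(X;n/2)=\vartheta(X;0)$ for $n$ even and $\vartheta(X;n/2)=\sum_k(-1)^ke^{-\pi k^2X}$ for $n$ odd), and then invoking the Jacobi modular identity $\vartheta(1/X;0)=\sqrt X\,\vartheta(X;0)$, I obtain the compact block representation
\begin{equation*}
\theta(\alpha;\tfrac12+iy)=\vartheta\bigl(\tfrac{\alpha}{y};0\bigr)\,\vartheta(4\alpha y;0)+\vartheta_{\star}\bigl(\tfrac{\alpha}{y}\bigr)\,\vartheta_{\star}(4\alpha y),
\end{equation*}
where $\vartheta_{\star}(X):=\sum_{n\in\mathbb Z}e^{-\pi(n+\tfrac12)^2X}$, and analogously with $\alpha$ replaced by $2\alpha$. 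Each factor is then an explicit positive monotone function of $y$, which is the structure needed for the derivative analysis.

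\textbf{Step 2: Sign of the derivative via regime splitting.} Differentiating term by term, $F'(y)$ splits naturally into a positive part coming from the factors with argument $\alpha/y$ (decreasing in $y$) minus a $\sqrt 2$-weighted positive part coming from those with argument $4\alpha y$ (increasing in $y$). Establishing $F'(y)>0$ reduces to a quantitative comparison of these two positive quantities. In parallel with the three-case split of Lemma~\ref{Rab}, I would partition the $(\alpha,y)$-parameter space into regimes determined by the relative sizes of $\alpha/y$ and $\alpha y$, and in each regime apply the appropriate halves of Lemmas~\ref{LemmaTTT}--\ref{LemmaT2} together with the Jacobi triple product \eqref{Product} to bound the leading terms, absorbing the $n\ge 2$ tails into small multiplicative remainders.

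\textbf{Step 3: Main obstacle and completion.} The principal difficulty is the sharpness of the constant $\beta_c=\sqrt 2$. Since Montgomery's theorem already gives $\partial_y\theta(\alpha;\tfrac12+iy)\bigl|_{y=\sqrt 3/2}=0$ for every $\alpha>0$, one has $F'(\sqrt 3/2)=0$ identically, so the strict monotonicity near the hexagonal point must be certified at \emph{second order}. Concretely, I would Taylor-expand both $\partial_y\theta(\alpha;\tfrac12+iy)$ and $\partial_y\theta(2\alpha;\tfrac12+iy)$ at $y=\sqrt 3/2$ using the representation of Step~1, and verify that the linear coefficient of the first strictly exceeds $\sqrt 2$ times the linear coefficient of the second for every $\alpha\ge 1$. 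This reduces to a scalar inequality between two rapidly convergent Jacobi-type series evaluated at the hexagonal point, which I would verify by combining an explicit estimate at $\alpha=1$ with a monotonicity argument in $\alpha$. Away from $y=\sqrt 3/2$, the bulk estimates from Step~2 already provide a positive lower bound for $F'$, so patching the boundary and bulk regions yields $F'(y)>0$ throughout $(\sqrt 3/2,\infty)$, completing the proof.
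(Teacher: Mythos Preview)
Your overall strategy---proving $F'(y)>0$ for all $y>\tfrac{\sqrt3}{2}$---is exactly what the paper does, and your block factorisation in Step~1 is correct (the paper does not use it; it works instead with the raw double sum in Lemma~\ref{Lemma413} and the half-Poisson expansion in Lemma~\ref{Lemma409}). But there are two genuine gaps.

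First, the tools you cite in Step~2 do not apply. Lemmas~\ref{LemmaTTT}--\ref{LemmaT2} estimate $\partial_Y\vartheta(X;Y)$, whereas differentiating your block form in $y$ hits only the \emph{first} arguments $X=\alpha/y$ and $X=4\alpha y$ of $\vartheta(X;0)$ and $\vartheta_\star(X)$; you would need new bounds on $\partial_X\vartheta(X;0)$ and $\partial_X\vartheta_\star(X)$. Moreover your description of the sign structure of $F'$ is too coarse: by the product rule each $\partial_y\theta$ contains both positive and negative pieces, and after subtracting $\sqrt2\,\partial_y\theta(2\alpha;\cdot)$ the picture is not simply ``positive minus $\sqrt2$ times positive.''

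Second, and this is the real gap, your Step~3 does not close the argument. A Taylor expansion at $y=\tfrac{\sqrt3}{2}$ certifies $F'(y)>0$ only on some neighbourhood of that point, whose size you have not controlled uniformly in $\alpha$; meanwhile a ``bulk'' estimate of the type you describe in Step~2 will need $y/\alpha$ (or $\alpha/y$) bounded away from its boundary value, so for large $\alpha$ there is no guarantee the two regions overlap. The paper circumvents this with a second-order operator trick you are missing: it applies $L:=\partial_y^2+\tfrac{2}{y}\partial_y$ to $F$, obtains an explicit lattice-sum formula for $LF$ (Lemma~\ref{Lemma413}), and shows $LF>0$ on the \emph{entire} interval $[\tfrac{\sqrt3}{2},\,1.8\alpha]$ (Lemmas~\ref{Lemma418}--\ref{Lemma422}). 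Since $\tfrac{d}{dy}(y^{2}F')=y^{2}\,LF$ and $F'(\tfrac{\sqrt3}{2})=0$, this immediately yields $F'>0$ on $[\tfrac{\sqrt3}{2},\,1.8\alpha]$. A separate direct estimate of $F'$ (Lemmas~\ref{Lemma45}--\ref{Lemma46}) then handles $y\ge 1.15\alpha$, and for every $\alpha\ge1$ the two intervals overlap. Replacing ``Taylor expand at the hexagonal point'' by ``show $LF>0$ on a long interval starting at the hexagonal point'' is precisely the idea needed to make your outline work.
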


In the rest of this section, we aim to prove Theorem \ref{4Th2} (which is a consequence of  Lemma \ref{Lemma44} and \ref{Lemma46}). Due to its difficulty and complexity, we shall divide its proof into two cases.

We first establish the following
\begin{lemma}\label{Lemma42} Assume that $\alpha\geq1$ and $y\geq\frac{\sqrt3}{2}$.
 Then we have the following  lower bound  estimate
 \begin{equation}\aligned\nonumber
\big(\frac{\partial^2}{\partial y^2}+\frac{2}{y}\frac{\partial}{\partial y}\big)\Big(\theta(\alpha;\frac{1}{2}+iy)-\sqrt2\theta(2\alpha;\frac{1}{2}+iy)\Big)
\geq\frac{2(\pi\alpha)^2}{y^4}e^{-\pi\frac{\alpha}{y}}\cdot
\mathcal{W}(y;\alpha),
\endaligned\end{equation}
where
\begin{equation}\aligned\label{www111}
\mathcal{W}(y;\alpha):=1+(2(y^2-\frac{1}{4})^2-\frac{4}{\pi\alpha}y^3\cdot(1+\epsilon_{a}))\cdot e^{-\pi\alpha (y-\frac{3}{4y})}
-4\sqrt2(1+\epsilon_{b})\cdot e^{-\pi\frac{\alpha}{y}}.
\endaligned\end{equation}
Here $\epsilon_a$ is small and can be explicitly controlled by
$$
\epsilon_a:=\epsilon_{a,1}+\epsilon_{a,2}+\epsilon_{a,3}+\epsilon_{a,4}.
$$
and
\begin{equation}\aligned\nonumber
\epsilon_a\rightarrow0\;\;\hbox{as}\;\;y\mapsto\infty.
\endaligned\end{equation}

Here each $\epsilon_{a,j}(j=1,2,3,4)$ is small and expressed by
\begin{equation}\aligned\nonumber
\epsilon_{a,1}:&=\sum_{n=2}^\infty(2n-1)^2 e^{-\pi\alpha y((2n-1)^2-1)}\\
\epsilon_{a,2}:&=\sum_{n=2}^\infty e^{-\frac{\pi\alpha}{4y}((2n-1)^2-1)}\\
\epsilon_{a,3}:&=\epsilon_{a,1}\cdot\epsilon_{a,2}\\
\epsilon_{a,4}:&=2 e^{-\pi\alpha(3y-\frac{1}{4y})}\big(1+\sum_{n=2}^\infty n^2 e^{-4\pi\alpha y(n^2-1)}\big)\cdot\vartheta_3(\frac{\alpha}{y})
.
\endaligned\end{equation}
And $\epsilon_b$ is small and consist of four smaller parts
\begin{equation}\aligned\nonumber
\epsilon_b:=\epsilon_{b,1}+\epsilon_{b,2}+\epsilon_{b,3}+\epsilon_{b,4},
\endaligned\end{equation}
and
\begin{equation}\aligned\nonumber
\epsilon_b\rightarrow0\;\;\hbox{as}\;\;y\mapsto\infty.
\endaligned\end{equation}

Here
\begin{equation}\aligned\nonumber
\epsilon_{b,1}
:&=2y^4 e^{-2\pi\alpha y}\cdot(1+\sum_{n=2}^\infty e^{-\frac{2\pi\alpha}{y}((2n-1)^2-1)})\cdot
(1+\sum_{n=2}^\infty(2n-1)^4 e^{-2\pi\alpha y ((2n-1)^2-1)})\\
\epsilon_{b,2}
:&=\frac{1}{8}e^{-2\pi\alpha y}\cdot(1+\sum_{n=2}^\infty (2n-1)^4 e^{-\frac{2\pi\alpha}{y}((2n-1)^2-1)})
\cdot(1+\sum_{n=2}^\infty e^{-2\pi\alpha y((2n-1)^2-1)})\\
\epsilon_{b,3}
:&=16y^4 e^{-\pi \alpha (8y-\frac{2}{y})}\cdot(1+\sum_{n=2}^\infty n^4e^{-8\pi\alpha y(n^2-1)})\cdot(1+2\sum_{n=1}^\infty e^{-2\pi\frac{\alpha}{y}n^2})\\
\epsilon_{b,4}
:&=y^4 e^{-\pi \alpha (8y-\frac{2}{y})}\cdot(1+\sum_{n=2}^\infty e^{-8\pi\alpha y(n^2-1)})\cdot(1+2\sum_{n=1}^\infty \frac{n^4}{y^4}e^{-2\pi\frac{\alpha}{y}n^2}).
\endaligned\end{equation}

\end{lemma}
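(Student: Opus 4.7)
The plan is to differentiate the lattice-sum representation of the theta function term by term and to identify the leading exponentials explicitly.

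First I would write
$$\theta(\alpha;\tfrac{1}{2}+iy)=\sum_{(m,n)\in\mathbb{Z}^2}e^{-\pi\alpha Q_{m,n}(y)},\qquad Q_{m,n}(y):=\frac{(m/2+n)^2}{y}+m^2y,$$
and compute directly $Q'_{m,n}=m^2-(m/2+n)^2/y^2$ and $Q''_{m,n}=2(m/2+n)^2/y^3$. The key algebraic observation is that the operator $D:=\partial_y^2+\tfrac{2}{y}\partial_y$ produces the cancellation $-Q''_{m,n}-2Q'_{m,n}/y=-2m^2/y$, so that
$$D\bigl(e^{-\pi\alpha Q_{m,n}}\bigr)=\pi\alpha\,e^{-\pi\alpha Q_{m,n}}\!\left[\pi\alpha\!\left(m^2-\frac{(m/2+n)^2}{y^2}\right)^{\!2}-\frac{2m^2}{y}\right].$$

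I would then split the double sum into three regimes and match the three blocks of $\mathcal{W}$. The $m=0$ terms are nonnegative; the $n=\pm1$ part equals exactly $2(\pi\alpha)^2 y^{-4}e^{-\pi\alpha/y}$, which is the prefactor pulled out in front of $\mathcal{W}$. The four terms with $m=\pm1$ and $n\in\{0,-1\}$ (resp.\ $n\in\{0,1\}$ for $m=-1$) all satisfy $Q=y+\tfrac{1}{4y}$; their total, after division by the prefactor, produces
$$\frac{(4y^2-1)^2}{8}\,e^{-\pi\alpha(y-\frac{3}{4y})}-\frac{4y^3}{\pi\alpha}\,e^{-\pi\alpha(y-\frac{3}{4y})}=2(y^2-\tfrac14)^2\,e^{-\pi\alpha(y-\frac{3}{4y})}-\frac{4y^3}{\pi\alpha}\,e^{-\pi\alpha(y-\frac{3}{4y})},$$
i.e.\ the bare middle block of $\mathcal{W}$ before the $(1+\epsilon_a)$ inflation. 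Applying the same formula to $\sqrt2\,\theta(2\alpha;\tfrac12+iy)$ with $\alpha\mapsto 2\alpha$, the leading $(m,n)=(0,\pm1)$ piece produces $-8\sqrt2\,\pi^2\alpha^2y^{-4}e^{-2\pi\alpha/y}$, which after dividing by the prefactor is exactly $-4\sqrt2\,e^{-\pi\alpha/y}$, the bare third block of $\mathcal{W}$.

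All remaining contributions --- the positive $m=0,|n|\ge2$ tail, the $m=\pm1, |n|\ge2$ terms and the $|m|\ge2$ terms in $\theta(\alpha;\cdot)$, together with their analogues in $\theta(2\alpha;\cdot)$ --- are then inflated into the negative subtractions of $\mathcal{W}$ by factoring out the common exponential and bounding the leftover $(m,n)$-sums by geometric-series estimates. The pieces $\epsilon_{a,1},\epsilon_{a,2},\epsilon_{a,3}$ are produced by Poisson-type resummation over $n$ inside the $m=\pm 1$ block (with the factor $(2n-1)^2$ reflecting the half-integer shift $(m/2+n)$ for odd $m$), while $\epsilon_{a,4}$ collects the $|m|\ge 2$ tail after dominating $\sum_n e^{-\pi\alpha(m/2+n)^2/y}$ by $\vartheta(\alpha/y;\cdot)$. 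The four pieces $\epsilon_{b,j}$ arise in the same way from $\theta(2\alpha;\cdot)$ with damping exponent doubled, which explains the $2\pi\alpha$ and $8\pi\alpha$ coefficients.

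The main obstacle is the careful bookkeeping that preserves the lower-bound direction: positive contributions must either be dropped or dominated from below, and only the negative contributions are enlarged by the factors $(1+\epsilon_a)$ and $(1+\epsilon_b)$. The squared bracket $(m^2-(m/2+n)^2/y^2)^2$ expands into three cross terms whose relative sizes depend strongly on $|m|$ and whose sum over $n$ (for each fixed $|m|\ge 2$) must be resummed via a Poisson-type identity in order to produce the explicit quartic weights $(2n-1)^4$ and $n^4$ visible in $\epsilon_{b,1}$ and $\epsilon_{b,3}$. Cleanly isolating the positive $m=0$ tail so that it does not need to appear in the bound, while still matching the asymmetric form of $\mathcal{W}$, is the subtle combinatorial point of the estimate.
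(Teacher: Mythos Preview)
Your overall strategy is the paper's: apply $D=\partial_y^2+\tfrac{2}{y}\partial_y$ term by term (the paper records your identity $D\,e^{-\pi\alpha Q_{m,n}}=\pi\alpha\,e^{-\pi\alpha Q_{m,n}}[\pi\alpha(m^2-(m/2+n)^2/y^2)^2-2m^2/y]$ as a separate lemma), keep the four resulting double sums, extract precisely the leading exponentials you listed, and bound the tails. Two points in your tail description are misidentified, however, and following them literally would not reproduce the stated $\epsilon_a,\epsilon_b$.

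First, no Poisson resummation is used. The device is a \emph{parity substitution}: set $(p,q)=(m,2n+m)$, so $Q_{m,n}=p^2y+q^2/(4y)$ with $p\equiv q\pmod2$, and each double sum factors as a product of two one-variable sums over the odd--odd and even--even sublattices. With this split, your $m=\pm1$ block contributes only the factor $1+\epsilon_{a,2}$ (the tail of $\sum_{q\text{ odd}}e^{-\pi\alpha q^2/(4y)}$). The term $\epsilon_{a,1}$ comes from odd $|m|\ge 3$: its $(2k-1)^2$ is the \emph{weight} $m^2$ with $m$ odd, not a half-integer shift artifact. Then $\epsilon_{a,3}=\epsilon_{a,1}\epsilon_{a,2}$ is the cross product of the two factors, and $\epsilon_{a,4}$ is the entire even $|m|\ge 2$ block, where $m/2+n$ is an integer and the $n$-sum is $\vartheta_3(\alpha/y)$.

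Second, for $\epsilon_b$ the paper does not expand the square $(m^2-(m/2+n)^2/y^2)^2$ into three cross terms. Since both entries are nonnegative one has $(a-b)^2\le a^2+b^2$, so the square is replaced by $m^4+(m/2+n)^4/y^4$; the same parity substitution then produces the quartic weights $(2k-1)^4$ (from $m^4$, $m$ odd) in $\epsilon_{b,1},\epsilon_{b,2}$ and $k^4$ (from $m^4$, $m=2k$) in $\epsilon_{b,3},\epsilon_{b,4}$ directly. Your proposed route through the three-term expansion and a Poisson step is unnecessary and would not match the stated error terms.
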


An elementary estimate of $\mathcal{W}(y;\alpha)$ in Lemma \ref{Lemma42}, we obtain
\begin{lemma}\label{Lemma43} Assume that $\alpha\geq1$. If $y\in[\frac{\sqrt3}{2},1.8\alpha]$,

\begin{equation}\aligned\nonumber
\big(\frac{\partial^2}{\partial y^2}+\frac{2}{y}\frac{\partial}{\partial y}\big)\Big(\theta(\alpha;\frac{1}{2}+iy)-\sqrt2\theta(2\alpha;\frac{1}{2}+iy)\Big)
>0.
\endaligned\end{equation}
\end{lemma}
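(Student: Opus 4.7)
The plan is to invoke Lemma \ref{Lemma42} and reduce the assertion to the pointwise positivity $\mathcal{W}(y;\alpha)>0$ on the set $\{(y,\alpha):\alpha\ge 1,\ \sqrt{3}/2\le y\le 1.8\alpha\}$. The cutoff $y\le 1.8\alpha$ is precisely chosen so that $\alpha/y\ge 1/1.8$, which gives $4\sqrt{2}\,e^{-\pi\alpha/y}\le 4\sqrt{2}\,e^{-\pi/1.8}\approx 0.987<1$; this numerical margin, after swallowing the $(1+\epsilon_b)$ correction, is what must survive in the end.

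First I would bound the error packets $\epsilon_a=\sum_{j=1}^{4}\epsilon_{a,j}$ and $\epsilon_b=\sum_{k=1}^{4}\epsilon_{b,k}$ uniformly on the domain. Each summand is a convergent series in $q_{1}:=e^{-\pi\alpha y}$ or $q_{2}:=e^{-\pi\alpha/y}$, and on the admissible set $q_{1}\le e^{-\pi\sqrt{3}/2}$ and $q_{2}\le e^{-\pi/1.8}$. Comparing each series to a geometric tail and keeping track of the polynomial prefactors produces explicit, monotone upper bounds $\epsilon_{a}\le\bar\epsilon_{a}(\alpha y)$ and $\epsilon_{b}\le\bar\epsilon_{b}(\alpha y,\alpha/y)$, small enough on the whole domain that $4\sqrt{2}(1+\bar\epsilon_{b})\,e^{-\pi/1.8}<1$ with a fixed positive gap.

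Next I would split the analysis by the sign of the middle coefficient
\[
A(y,\alpha):=2\bigl(y^2-\tfrac14\bigr)^{2}-\tfrac{4y^{3}}{\pi\alpha}(1+\epsilon_{a}).
\]
If $A\ge 0$, the middle term in $\mathcal{W}$ is nonnegative and I discard it to obtain $\mathcal{W}\ge 1-4\sqrt{2}(1+\epsilon_{b})e^{-\pi\alpha/y}>0$ from the preceding step. If $A<0$, then $|A|\le (4y^{3}/\pi\alpha)(1+\epsilon_{a})$, and the starting inequality reads
\[
\mathcal{W}\ge 1-\tfrac{4y^{3}(1+\epsilon_{a})}{\pi\alpha}\,e^{-\pi\alpha(y-3/(4y))}-4\sqrt{2}(1+\epsilon_{b})\,e^{-\pi\alpha/y}.
\]
I would control the right-hand side by studying the auxiliary function $h(y,\alpha):=(4y^{3}/\pi\alpha)\,e^{-\pi\alpha(y-3/(4y))}$: on $y\ge\sqrt{3}/2$ one has $y-3/(4y)\ge 0$, and a direct analysis of $\partial_{y}h$ shows $h$ is controlled by its boundary value $3\sqrt{3}/(2\pi\alpha)$ together with an explicit interior maximum. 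Combined with the observation that $A<0$ already forces $y$ close to $\sqrt{3}/2$ (which in turn forces $\alpha/y\ge 2\alpha/\sqrt{3}$, making the last term exponentially small in $\alpha$), this yields strict positivity.

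The main obstacle is the tight numerical margin near the upper edge $y=1.8\alpha$: the identity $1-4\sqrt{2}e^{-\pi/1.8}\approx 0.013$ leaves essentially no slack, so the geometric tail estimates of Step~1 must not be wasteful. Happily, at that edge $\alpha y\ge 1.8\alpha^{2}\ge 1.8$, so the error $\epsilon_{b}$ is genuinely tiny (its dominant term $\epsilon_{b,1}$ decays like $y^{4}e^{-2\pi\alpha y}$). The other delicate regime is $A<0$, where one balances the non-small middle exponential $e^{-\pi\alpha(y-3/(4y))}$ (since $y-3/(4y)$ is near $0$) against the small last term $e^{-\pi\alpha/y}$ (since $\alpha/y$ is then large); the two cancellations match up precisely because in both regimes the error bounds from Step~1 are sharpest. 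Assembling the three cases and quoting Lemma~\ref{Lemma42} then completes the proof.
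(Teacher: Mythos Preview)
Your overall strategy---invoke Lemma~\ref{Lemma42}, bound the error packets $\epsilon_a,\epsilon_b$, and split on the sign of the middle coefficient $A$---is exactly what the paper does (the paper carries it out in Lemmas~\ref{Lemma420}--\ref{Lemma422}, phrasing the split as three $y$-intervals $[\tfrac{\sqrt3}{2},1]$, $[1,y_\epsilon]$, $[y_\epsilon,\infty)$ with $y_\epsilon\approx 1.131$ the root of $2(y^2-\tfrac14)^2=\tfrac{4}{\pi}y^3(1+\epsilon_a)$). Your $A\ge 0$ case is identical to the paper's third interval, and your reading of the upper-edge margin is accurate.

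The $A<0$ case, however, has a gap as written. Your sentence ``$A<0$ already forces $y$ close to $\sqrt3/2$, which in turn forces $\alpha/y\ge 2\alpha/\sqrt3$, making the last term exponentially small in $\alpha$'' is false at $\alpha=1$: there $A<0$ persists up to $y\approx 1.13$, so $\alpha/y$ can drop to $\approx 0.88$ and the last term is $\approx 0.35$, not small. Moreover, by discarding the positive contribution $2(y^2-\tfrac14)^2$ you reduce yourself to the inequality
\[
1-\frac{4y^3(1+\epsilon_a)}{\pi\alpha}\,e^{-\pi\alpha(y-3/(4y))}-4\sqrt{2}(1+\epsilon_b)\,e^{-\pi\alpha/y}>0,
\]
whose margin at the corner $(y,\alpha)=(\tfrac{\sqrt3}{2},1)$ is only $1-\tfrac{3\sqrt3}{2\pi}(1+\epsilon_a)-4\sqrt2(1+\epsilon_b)e^{-2\pi/\sqrt3}\approx 0.02$; this survives only if $\epsilon_a$ is tracked pointwise (it is $\approx 0.002$ there), and fails outright under any uniform bound of the size the paper records ($\epsilon_a\le 0.15$). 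The paper sidesteps this squeeze by \emph{retaining} $2(y^2-\tfrac14)^2$ in the middle term throughout the first two intervals, which buys a comfortable margin $\approx 0.41$. A minor additional inaccuracy: $\epsilon_{a,2}$ and $\epsilon_{a,4}$ depend on $\alpha/y$, not on $\alpha y$, so your proposed bound $\epsilon_a\le\bar\epsilon_a(\alpha y)$ cannot hold in that form.
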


In view of Lemma \ref{Lemma43}, one has
\begin{lemma}\label{Lemma44} Assume that $\alpha\geq1$. If $y\in[\frac{\sqrt3}{2},1.8\alpha]$,

\begin{equation}\aligned\nonumber
\frac{\partial}{\partial y}\Big(\theta(\alpha;\frac{1}{2}+iy)-\sqrt\theta(2\alpha;\frac{1}{2}+iy)\Big)
>0.
\endaligned\end{equation}
\end{lemma}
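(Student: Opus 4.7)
\textbf{Proof proposal for Lemma \ref{Lemma44}.} The plan is to deduce the first-order monotonicity of $F(y):=\theta(\alpha;\tfrac12+iy)-\sqrt2\,\theta(2\alpha;\tfrac12+iy)$ from the second-order inequality of Lemma \ref{Lemma43} by rewriting the operator in divergence form and then anchoring the argument at the hexagonal point.

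The key algebraic identity is
\begin{equation*}
\frac{\partial^{2}}{\partial y^{2}}+\frac{2}{y}\frac{\partial}{\partial y}
   \;=\;\frac{1}{y^{2}}\,\frac{d}{dy}\!\left(y^{2}\,\frac{d}{dy}\right).
\end{equation*}
Applied to $F$, Lemma \ref{Lemma43} becomes
$\frac{d}{dy}\!\bigl(y^{2}F'(y)\bigr)>0$ on $[\tfrac{\sqrt3}{2},1.8\alpha]$,
so $y\mapsto y^{2}F'(y)$ is \emph{strictly increasing} on this interval. This is the first step.

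The second step is to evaluate $y^2 F'(y)$ at the left endpoint $y=\tfrac{\sqrt3}{2}$. I claim $F'(\tfrac{\sqrt3}{2})=0$. Indeed, the hexagonal point $z_{0}=\tfrac12+i\tfrac{\sqrt3}{2}$ is a fixed point of the $\mathcal{G}$-transformation $\phi(z):=1-\tfrac{1}{z}$ (one checks $-1/z_{0}=-\tfrac12+i\tfrac{\sqrt3}{2}$, then adding $1$ gives $z_{0}$). The complex derivative $\phi'(z_{0})=1/z_{0}^{2}=e^{-i2\pi/3}$ is a non-trivial rotation, so the real Jacobian of $\phi$ at $z_0$ has no non-zero fixed vector. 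Since $\theta(\alpha;\cdot)$ and $\theta(2\alpha;\cdot)$ are $\mathcal{G}$-invariant by Lemma \ref{G111}, invariance $F\circ\phi=F$ forces $\nabla F(z_{0})=0$ and in particular the $y$-derivative vanishes at $z_{0}$; this yields $F'(\tfrac{\sqrt3}{2})=0$. (Alternatively, one may invoke Montgomery's theorem: $z_0$ is a global minimizer of each $\theta(\alpha;\cdot)$, hence a critical point.)

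Combining the two steps gives $y^{2}F'(y)>\bigl(\tfrac{\sqrt3}{2}\bigr)^{2}F'(\tfrac{\sqrt3}{2})=0$ for all $y\in(\tfrac{\sqrt3}{2},1.8\alpha]$, and dividing by $y^{2}>0$ produces the desired strict inequality $F'(y)>0$. Note that no obstacle arises: the only delicate point is the vanishing of $F'$ at the hexagonal point, and that is handled cleanly by the fixed-point/rotation argument above. The heavy analytic work has already been absorbed into Lemmas \ref{Lemma42}--\ref{Lemma43}, and the remaining derivation is simply a one-line ODE-type observation plus a boundary condition.
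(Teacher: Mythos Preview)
Your proof is correct and follows essentially the same approach as the paper: both rewrite the second-order operator of Lemma~\ref{Lemma43} in divergence form $\tfrac{1}{y^{2}}\tfrac{d}{dy}\bigl(y^{2}\tfrac{d}{dy}\bigr)$, conclude that $y^{2}F'(y)$ is strictly increasing, and then use the vanishing of $F'$ at the hexagonal point $y=\tfrac{\sqrt3}{2}$ as the boundary condition. Your write-up even corrects a typo in the paper's displayed identity (the paper writes $y^{-2}$ where $y^{2}$ is meant) and supplies a cleaner justification of $F'(\tfrac{\sqrt3}{2})=0$ via the rotational fixed-point argument, where the paper simply asserts it (relying on Montgomery's result noted just before Theorem~\ref{4Th2}).
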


\begin{proof} The proof follows from Lemma \ref{Lemma43}. In fact
it suffices to notice that
\begin{equation}\aligned\nonumber
&\frac{\partial}{\partial y}\Big(y^{-2}\Big(
\frac{\partial}{\partial y}\big(\theta(\alpha;\frac{1}{2}+iy)-\sqrt\theta(2\alpha;\frac{1}{2}+iy)\big)
\Big)\Big)\\
=&\big(\frac{\partial^2}{\partial y^2}+\frac{2}{y}\frac{\partial}{\partial y}\big)\Big(\theta(\alpha;\frac{1}{2}+iy)-\sqrt2\theta(2\alpha;\frac{1}{2}+iy)\Big)
\endaligned\end{equation}
and
\begin{equation}\aligned\nonumber
\Big(
\frac{\partial}{\partial y}\big(\theta(\alpha;\frac{1}{2}+iy)-\sqrt\theta(2\alpha;\frac{1}{2}+iy)\big)
\Big)\mid_{y=\frac{\sqrt3}{2}}=0.
\endaligned\end{equation}
\end{proof}


\begin{lemma}\label{Lemma45} The following estimates hold
\begin{equation}\aligned\nonumber
\frac{\partial}{\partial y}\Big(\theta (\alpha;\frac{1}{2}+iy)-\sqrt2\theta (2\alpha;\frac{1}{2}+iy)
\Big)&\geq
\frac{2}{\sqrt{y\alpha}}e^{-\pi\frac{y}{2\alpha}}\cdot \mathcal{Q}(y;\alpha),
\endaligned\end{equation}
where
\begin{equation}\aligned\nonumber
\mathcal{Q}(y;\alpha):=\frac{\pi y}{2\alpha}-\frac{1}{2}-(\frac{\pi}{\alpha}-\frac{1}{2}) e^{-\pi\frac{y}{2\alpha}}
-ye^{-\pi y(\alpha-\frac{1}{2\alpha})}\mathcal{P}(y;\alpha)
-ye^{-\pi y(2\alpha-\frac{1}{2\alpha})}\mathcal{P}(y;2\alpha).
\endaligned\end{equation}

\begin{equation}\aligned\nonumber
\mathcal{P}(y;\alpha):=\sigma_5+\sigma_6+\sigma_7.
\endaligned\end{equation}
Here
\begin{equation}\aligned\nonumber
\sigma_5:&=
\frac{1}{2y}\big(1+2e^{-\pi\frac{y}{\alpha}}(1+\nu(\frac{y}{\alpha}))\big)(1+\nu(y\alpha))\\
\sigma_6:&=\alpha\pi(1+\mu(y\alpha))(1+2e^{-\pi\frac{y}{\alpha}}(1+\nu(\frac{y}{\alpha})))\\
\sigma_7:&=\frac{2\pi}{\alpha}e^{-\pi\frac{y}{\alpha}}(1+\nu(y\alpha))(1+\mu(\frac{y}{\alpha})).
\endaligned\end{equation}

\begin{equation}\aligned\nonumber
\mu(y):&=\sum_{n=2}^\infty n^2 e^{-\pi y(n^2-1)}\\
\nu(y):&=\sum_{n=2}^\infty  e^{-\pi y(n^2-1)}
\endaligned\end{equation}

\end{lemma}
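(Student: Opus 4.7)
\textbf{Proof plan for Lemma~\ref{Lemma45}.} The approach I would take is to expand both $\theta(\alpha;\tfrac12+iy)$ and $\theta(2\alpha;\tfrac12+iy)$ using Lemma~\ref{Lemma31} specialized to $x=\tfrac12$. Since $\vartheta(X;k)=\vartheta(X;0)$ for $k\in\mathbb{Z}$ and $\vartheta(X;k-\tfrac12)=\vartheta(X;\tfrac12)$, one splits the outer $n$-sum into even and odd parts. Poisson-dualizing the two specific values via \eqref{PPP2}, so that $\sqrt{y/\alpha}\,\vartheta(y/\alpha;0)=\sum_{n\in\mathbb{Z}}e^{-\pi\alpha n^2/y}$ and $\sqrt{y/\alpha}\,\vartheta(y/\alpha;\tfrac12)=\sum_{n\in\mathbb{Z}}e^{-\pi\alpha(n-1/2)^2/y}$, then gives a manifestly real series representation of $\theta(\alpha;\tfrac12+iy)$ whose $y$-derivative can be computed term by term.

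Next I would differentiate in $y$ and form the combination $\tfrac{\partial}{\partial y}\bigl(\theta(\alpha;\tfrac12+iy)-\sqrt{2}\,\theta(2\alpha;\tfrac12+iy)\bigr)$. The ``$m=0$'' block (the constant $1$ plus even $n$'s coming from the $\tfrac{1}{y}$-scale dual of $\vartheta(\cdot;0)$) produces, after the subtraction with coefficient $\sqrt{2}$ and after extracting the common prefactor $\tfrac{2}{\sqrt{y\alpha}}e^{-\pi y/(2\alpha)}$, the leading expression $\tfrac{\pi y}{2\alpha}-\tfrac12-(\tfrac{\pi}{\alpha}-\tfrac12)e^{-\pi y/(2\alpha)}$. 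The value $\beta=\sqrt{2}$ is precisely what causes the cancellation between the $\alpha$- and $2\alpha$-series at the very leading order. The remaining odd-$n$ blocks in each expansion carry the respective exponential weights $e^{-\pi\alpha y-\pi\alpha/(4y)}$ and $e^{-2\pi\alpha y-\pi\alpha/(2y)}$ coming from the $|m|=1$ rows, and once multiplied back by the extracted $e^{+\pi y/(2\alpha)}$ they produce exactly $y\,e^{-\pi y(\alpha-1/(2\alpha))}\mathcal{P}(y;\alpha)$ and $y\,e^{-\pi y(2\alpha-1/(2\alpha))}\mathcal{P}(y;2\alpha)$.

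The three components $\sigma_5,\sigma_6,\sigma_7$ of $\mathcal{P}(y;\alpha)$ then arise from the three natural cross-pairings of tail estimates: using $\sum_{n\ge 2}n^2 e^{-\pi X(n^2-1)}\le\mu(X)$ and $\sum_{n\ge 2}e^{-\pi X(n^2-1)}\le\nu(X)$ at one of the arguments $y\alpha$ or $y/\alpha$, and recalling that $1+\mu$ and $1+\nu$ appear as the combination of the leading $n=1$ term plus tail. Derivative bounds of $\vartheta(y/\alpha;\cdot)$ in its second argument supplied by Lemma~\ref{LemmaTTT} control the $\partial_Y$-contributions in the odd-$n$ part when $y/\alpha\ge\tfrac15$, which is the relevant regime.

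The chief obstacle, and the delicate point, is the sign bookkeeping in the term-by-term lower bound. The factor $\tfrac{(m/2+n)^2}{y^2}-m^2$ coming from $\tfrac{\partial}{\partial y}e^{-\pi\alpha[(m/2+n)^2/y+m^2 y]}$ is negative for a number of lattice points, most notably $(m,n)=(\pm 1,0)$ and $(\pm 1,\mp 1)$, so these rows contribute negatively to $\tfrac{\partial}{\partial y}\theta(\alpha;\tfrac12+iy)$. The lower bound must therefore group these negative contributions \emph{after} subtracting $\sqrt 2\,\theta(2\alpha;\cdot)$ and show that only the stated $|m|=1$ exponential weights survive while higher-$|m|$ contributions are absorbed into the tails $\mu,\nu$. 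Matching the constants and exponents exactly to the shapes of $\mathcal{Q}$, $\mathcal{P}$, and $\sigma_5,\sigma_6,\sigma_7$ is then routine but lengthy computation that I would carry out block by block.
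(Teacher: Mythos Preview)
Your plan heads in a reasonable direction but diverges from the paper's argument in structure and in one place goes astray. The paper does \emph{not} split the outer $n$-sum by parity, nor does it Poisson-dualize $\vartheta(y/\alpha;0)$ and $\vartheta(y/\alpha;\tfrac12)$ a second time. It simply keeps the decomposition of Lemma~\ref{Lemma409},
\[
\theta(\alpha;z)=\sqrt{\tfrac{y}{\alpha}}\,\vartheta_3\!\big(\tfrac{y}{\alpha}\big)+2\sqrt{\tfrac{y}{\alpha}}\sum_{n\ge1}e^{-\alpha\pi y n^2}\vartheta\!\big(\tfrac{y}{\alpha};nx\big),
\]
and observes that the prefactors $\sqrt{y/\alpha}$ and $\sqrt{2}\,\sqrt{y/(2\alpha)}$ coincide, so the $\vartheta_3$-pieces subtract to $\sqrt{y/\alpha}\big(\vartheta_3(y/\alpha)-\vartheta_3(y/(2\alpha))\big)$. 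Differentiating this difference in $y$ term by term (Lemma~\ref{Lemma410}) gives a sum whose every summand is positive for $y/\alpha\ge\tfrac45$; keeping only the $n=1$ summand yields exactly the leading piece $\tfrac{\pi y}{2\alpha}-\tfrac12-(\tfrac{\pi}{\alpha}-\tfrac12)e^{-\pi y/(2\alpha)}$ after the common factor is extracted. The second sum is then bounded in absolute value (Lemma~\ref{Lemma411}), with no use of the special value $x=\tfrac12$: one just uses $|\vartheta(X;Y)|\le\vartheta_3(X)$.

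In particular, your account of $\sigma_5,\sigma_6,\sigma_7$ and your appeal to Lemma~\ref{LemmaTTT} are off. These three quantities do not arise from ``cross-pairings of tail estimates'' but from the three pieces of the product rule applied to $\sqrt{y}\cdot e^{-\alpha\pi y n^2}\cdot \vartheta(y/\alpha;nx)$: the $\tfrac{1}{2\sqrt{y}}$ factor gives $\sigma_5$, the $-\alpha\pi n^2$ from hitting the exponential gives $\sigma_6$, and the $\tfrac{1}{\alpha}\partial_X\vartheta(y/\alpha;nx)$ gives $\sigma_7$. Since $x$ is fixed and only $y$ varies, there is no $\partial_Y$-contribution at all, so Lemma~\ref{LemmaTTT} plays no role here; the $\vartheta$-derivative you need is $\partial_X\vartheta(X;Y)=-2\pi\sum_{n\ge1}n^2 e^{-\pi n^2 X}\cos(2\pi nY)$, bounded trivially in modulus by $2\pi e^{-\pi X}(1+\mu(X))$. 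Your parity/second-Poisson route can presumably be pushed through, but it is longer and would not land on the stated $\mathcal P,\sigma_5,\sigma_6,\sigma_7$ without reverting to the paper's cruder absolute bound on the second sum.
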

Based on Lemma \ref{Lemma45}, an elementary analysis of $\mathcal{Q}(y;\alpha)$ yields

\begin{lemma}\label{Lemma46} Assume that $\alpha\geq1$. If $y\in[1.15\alpha,\infty]$,

\begin{equation}\aligned\nonumber
\frac{\partial}{\partial y}\Big(\theta (\alpha;\frac{1}{2}+iy)-\sqrt2\theta (2\alpha;\frac{1}{2}+iy)
\Big)>0.
\endaligned\end{equation}
\end{lemma}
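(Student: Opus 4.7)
The plan is to apply Lemma \ref{Lemma45}, which reduces the claimed monotonicity to verifying the positivity of the auxiliary function $\mathcal{Q}(y;\alpha)$ throughout the region $R:=\{(\alpha,y):\alpha\geq 1,\ y\geq 1.15\alpha\}$. I would introduce the dimensionless parameters $t:=y/\alpha\geq 1.15$ and $s:=y\alpha=\alpha^{2}t\geq 1.15$, so that the definition of $\mathcal{Q}$ in Lemma \ref{Lemma45} can be rewritten as
\begin{equation*}
\mathcal{Q}(y;\alpha)=\Big(\tfrac{\pi t}{2}-\tfrac{1}{2}\Big)-\Big(\tfrac{\pi}{\alpha}-\tfrac{1}{2}\Big)e^{-\pi t/2}-y\mathcal{P}(y;\alpha)\,e^{-\pi(s-t/2)}-y\mathcal{P}(y;2\alpha)\,e^{-\pi(2s-t/2)}.
\end{equation*}
The strategy is then a careful comparison of the linearly-growing leading term $\tfrac{\pi t}{2}-\tfrac{1}{2}\geq \tfrac{1.15\pi}{2}-\tfrac{1}{2}>1.3$ against four exponentially-decaying corrections, reducing the verification to a numerical check at the worst-case corner $(\alpha,y)=(1,1.15)$ together with monotonicity arguments that propagate the inequality throughout $R$.

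The first correction is straightforward: $|(\tfrac{\pi}{\alpha}-\tfrac{1}{2})e^{-\pi t/2}|\leq(\pi-\tfrac{1}{2})e^{-\pi t/2}$, with the extremum at $\alpha=1$, and for $\alpha>2\pi$ this term is actually negative and thus helpful. For the $\mathcal{P}$-weighted terms I would expand $\mathcal{P}=\sigma_{5}+\sigma_{6}+\sigma_{7}$ as given in Lemma \ref{Lemma45} and estimate each piece separately. Since $t,s\geq 1.15$ uniformly on $R$, the quantities $\mu(\cdot)$ and $\nu(\cdot)$ decay very rapidly at these arguments, and each factor $(1+\mu(\cdot))$, $(1+\nu(\cdot))$ can be absorbed as a harmless multiplicative constant close to $1$.

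The main obstacle is the contribution of $\sigma_{6}$, because $\sigma_{6}$ carries an \emph{unbounded} factor $\alpha\pi$. The key reduction is the identity
\begin{equation*}
y\sigma_{6}\,e^{-\pi(s-t/2)}=\pi s\,(1+\mu(s))\bigl(1+2e^{-\pi t}(1+\nu(t))\bigr)\,e^{-\pi s+\pi t/2},
\end{equation*}
which absorbs all $\alpha$-growth into $s=\alpha^{2}t$. The kernel $\pi s\,e^{-\pi s+\pi t/2}=\pi\alpha^{2}t\,e^{\pi t(1/2-\alpha^{2})}$ has its $t$-critical point at $t=1/[\pi(\alpha^{2}-\tfrac{1}{2})]\leq 2/\pi<1.15$ for every $\alpha\geq 1$, so it is decreasing in $t$ throughout $R$; an analogous computation in $\alpha$ gives $\partial_{\alpha}(\alpha^{2}e^{-1.15\pi\alpha^{2}})<0$ for $\alpha\geq 1$. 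Consequently its maximum on $R$ is attained at the corner $(\alpha,t)=(1,1.15)$, yielding an explicit numerical bound around $0.6$. The companion terms $y\sigma_{5}\,e^{-\pi(s-t/2)}$ and $y\sigma_{7}\,e^{-\pi(s-t/2)}$ are of orders $e^{-\pi t/2}$ and $te^{-\pi(3t/2-1/2)}$ respectively, and are handled in the same way but more easily.

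The fourth term $y\mathcal{P}(y;2\alpha)\,e^{-\pi(2s-t/2)}$ is then negligible thanks to the extra factor $e^{-\pi s}\leq e^{-1.15\pi}$ relative to the third term. Assembling all the estimates at the corner $(\alpha,y)=(1,1.15)$ produces a numerical bound of the shape $\mathcal{Q}(1.15;1)\gtrsim 1.31-0.44-0.71>0$, and the monotonicity statements established above propagate the strict inequality throughout $R$. Combined with Lemma \ref{Lemma45}, this establishes Lemma \ref{Lemma46}; the decisive technical step, and the one I expect to require the most delicate bookkeeping, is the uniform control of the $\sigma_{6}$ contribution, where the unbounded $\alpha\pi$ coefficient must be shown to be swallowed by the faster decay $e^{-\pi\alpha^{2}t}$.
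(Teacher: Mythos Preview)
Your approach is correct and essentially the same as the paper's: both apply Lemma \ref{Lemma45} to reduce to $\mathcal{Q}(y;\alpha)>0$, pass to the variable $t=y/\alpha$, single out the $\sigma_6$ contribution as the delicate term and tame its unbounded $\alpha\pi$ prefactor via the product $\alpha y\,e^{-\pi\alpha y(\cdots)}$, and finish with a numerical check at $t=1.15$. The paper packages these steps as Lemmas \ref{Lemma413a} and \ref{Lemma4L}; your bookkeeping, which keeps the exponential factor attached to $y\mathcal{P}$ throughout rather than bounding $y\mathcal{P}$ in isolation, is if anything the cleaner way to carry out the same argument.
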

Trivially, for $\alpha\geq1$,
\begin{equation}\aligned\nonumber
[\frac{\sqrt3}{2},\infty)\subseteq[\frac{\sqrt3}{2}, 1.8\alpha]\cup[1.15\alpha,\infty)
\endaligned\end{equation}
then Lemmas \ref{Lemma46} and \ref{Lemma44} complete the proof of Theorem \ref{4Th2}.

In the rest of this Section, we provide the proof of Lemmas \ref{Lemma46} and \ref{Lemma44}.
\subsection{Some basic identities}

\begin{lemma}\label{Lemma47} The following identity for $\big(\frac{\partial^2}{\partial y^2}+\frac{2}{y}\frac{\partial}{\partial y}\big)\big(\theta(\alpha;z)-\sqrt2\theta(2\alpha;z)\big)$ holds
 \begin{equation}\nonumber\aligned
\big(\frac{\partial^2}{\partial y^2}+\frac{2}{y}\frac{\partial}{\partial y}\big)&\big(\theta(\alpha;z)-\sqrt2\theta(2\alpha;z)\big)=(\pi\alpha)^2\sum_{n,m} (n^2-\frac{(m+nx)^2}{y^2})^2e^{-\pi\alpha(yn^2+\frac{(m+nx)^2}{y})}\\
&+\frac{4\sqrt2\pi\alpha}{y}\sum_{n,m}n^2e^{-2\pi\alpha(yn^2+\frac{(m+nx)^2}{y})}
-\frac{2\pi\alpha}{y}\sum_{n,m}n^2e^{-\pi\alpha(yn^2+\frac{(m+nx)^2}{y})}\\
&-4\sqrt2(\pi\alpha)^2\sum_{n,m} (n^2-\frac{(m+nx)^2}{y^2})^2e^{-2\pi\alpha(yn^2+\frac{(m+nx)^2}{y})}
\endaligned\end{equation}
\end{lemma}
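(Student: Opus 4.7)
The plan is a direct term-by-term differentiation of the theta series. Because the defining series for $\theta(\alpha;z)$ converges absolutely and its termwise derivatives decay exponentially in $(m,n)$, uniformly on compact subsets of $\mathbb{H}$, I may differentiate under the summation sign to any order without extra justification.

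My first step is to rewrite each summand of $\theta(\alpha;x+iy)$ in the form $e^{-\pi\alpha g(y)}$ where
$g(y) := y n^{2} + \tfrac{(m+nx)^{2}}{y}$
is the only source of $y$-dependence. A direct computation gives $g'(y) = n^{2} - \tfrac{(m+nx)^{2}}{y^{2}}$ and $g''(y) = \tfrac{2(m+nx)^{2}}{y^{3}}$. The chain rule then produces, for a single summand $f(y) := e^{-\pi\alpha g(y)}$, the formula
$f''(y) + \tfrac{2}{y} f'(y) = \Big[(\pi\alpha)^{2}\bigl(g'(y)\bigr)^{2} - \pi\alpha\bigl(g''(y) + \tfrac{2}{y}g'(y)\bigr)\Big]\,f(y).$

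The one non-routine observation is the cancellation
$g''(y) + \tfrac{2}{y}g'(y) = \tfrac{2(m+nx)^{2}}{y^{3}} + \tfrac{2 n^{2}}{y} - \tfrac{2(m+nx)^{2}}{y^{3}} = \tfrac{2n^{2}}{y},$
which eliminates all $(m+nx)^{2}$-dependence in the first-order piece. Thus each summand of $\theta(\alpha;z)$ contributes
$\Big[(\pi\alpha)^{2}\bigl(n^{2} - \tfrac{(m+nx)^{2}}{y^{2}}\bigr)^{2} - \tfrac{2\pi\alpha n^{2}}{y}\Big]\,e^{-\pi\alpha(y n^{2}+(m+nx)^{2}/y)},$
and summation over $(m,n)\in\mathbb{Z}^{2}$ gives two of the four lines of the claimed identity.

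To finish, I repeat the same computation with $\alpha$ replaced by $2\alpha$ and then multiply by $-\sqrt{2}$. The coefficient $(\pi\alpha)^{2}$ becomes $(\pi(2\alpha))^{2}=4(\pi\alpha)^{2}$ and is picked up with a factor $-\sqrt{2}$, producing the $-4\sqrt{2}(\pi\alpha)^{2}$ prefactor on the line with exponent $-2\pi\alpha(\cdots)$; likewise $-\tfrac{2\pi\alpha n^{2}}{y}$ becomes $-\tfrac{4\pi\alpha n^{2}}{y}$ and the $-\sqrt{2}$ flips its sign to give $+\tfrac{4\sqrt{2}\pi\alpha}{y}$. Adding this to the $\theta(\alpha;z)$-contribution produces the four-line identity in Lemma~\ref{Lemma47}. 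There is no genuine obstacle here; the only care required is the bookkeeping of signs and of the numerical factors $\sqrt{2}$, $2$, $4$, $4\sqrt{2}$ inherited from the two frequencies $\alpha$ and $2\alpha$.
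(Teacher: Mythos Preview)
Your proof is correct and follows essentially the same route as the paper: both differentiate the theta series term by term, compute $\big(\partial_y^2+\tfrac{2}{y}\partial_y\big)e^{-\pi\alpha g(y)}$ for $g(y)=yn^2+(m+nx)^2/y$, observe the cancellation $g''+\tfrac{2}{y}g'=\tfrac{2n^2}{y}$, and then combine the two frequencies $\alpha$ and $2\alpha$. Your write-up makes the cancellation and the bookkeeping of the $\sqrt{2},\,4,\,4\sqrt{2}$ factors more explicit than the paper does, but there is no methodological difference.
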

\begin{proof} By definition of the theta function (\ref{thetas}), we have
 \begin{equation}\nonumber\aligned
\frac{\partial}{\partial y}\theta(\alpha;z)=\pi\alpha\sum_{n,m} n^2e^{-\pi\alpha(yn^2+\frac{(m+nx)^2}{y})}
-\pi\alpha\sum_{n,m} \frac{(m+nx)^2}{y^2}e^{-\pi\alpha(yn^2+\frac{(m+nx)^2}{y})}
\endaligned\end{equation}
and
 \begin{equation}\label{Th471}\aligned
\big(\frac{\partial^2}{\partial y^2}+\frac{2}{y}\frac{\partial}{\partial y}\big)\theta(\alpha;z)=&(\pi\alpha)^2\sum_{n,m} (n^2-\frac{(m+nx)^2}{y^2})^2e^{-\pi\alpha(yn^2+\frac{(m+nx)^2}{y})}
\\&-\frac{2\pi\alpha}{y}\sum_{n,m} n^2e^{-\pi\alpha(yn^2+\frac{(m+nx)^2}{y})}.
\endaligned\end{equation}
The identity follows by \eqref{Th471}.

\end{proof}

Similar to the proof of Lemma \ref{Lemma47} (the details are omit here), one has
\begin{lemma} It holds
 \begin{equation}\nonumber\aligned
\frac{\partial}{\partial y}\big(\theta(\alpha;z)-\sqrt2\theta(2\alpha;z)\big)&=\pi\alpha\sum_{n,m}\frac{(m+nx)^2}{y^2}e^{-\pi\alpha(yn^2+\frac{(m+nx)^2}{y})}
+2\sqrt2\pi\alpha\sum_{n,m}n^2e^{-2\pi\alpha(yn^2+\frac{(m+nx)^2}{y})}\\
&-\pi\alpha\sum_{n,m}n^2e^{-\pi\alpha(yn^2+\frac{(m+nx)^2}{y})}
-2\sqrt2\pi\alpha\sum_{n,m}\frac{(m+nx)^2}{y^2}e^{-2\pi\alpha(yn^2+\frac{(m+nx)^2}{y})}
\endaligned\end{equation}

\end{lemma}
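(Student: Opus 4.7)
The approach is to compute $\frac{\partial}{\partial y}$ termwise on the lattice-sum representation of $\theta(\alpha;z)-\sqrt{2}\,\theta(2\alpha;z)$. First I would differentiate the generic summand $\exp\!\bigl(-\pi\alpha(yn^{2}+(m+nx)^{2}/y)\bigr)$, using that the exponent has $y$-derivative $-\pi\alpha\bigl(n^{2}-(m+nx)^{2}/y^{2}\bigr)$. This immediately splits the derivative of $\theta(\alpha;z)$ into two lattice sums, one carrying $n^{2}$ (with sign $-\pi\alpha$) and one carrying $(m+nx)^{2}/y^{2}$ (with sign $+\pi\alpha$). These match the first and third sums on the right-hand side of the asserted identity.

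Next I would apply the same computation to $\theta(2\alpha;z)$, which simply amounts to the substitution $\alpha\mapsto 2\alpha$ inside the exponent (while the prefactor $\pi\alpha$ coming from differentiation of the exponent becomes $2\pi\alpha$). Multiplying by $-\sqrt{2}$ and collecting signs yields the remaining two sums on the right-hand side at frequency $2\alpha$, with prefactors $+2\sqrt{2}\pi\alpha$ on the $n^{2}$-part and $-2\sqrt{2}\pi\alpha$ on the $(m+nx)^{2}/y^{2}$-part. Assembling the four contributions produces the claimed formula.

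The only technical point is the legitimacy of exchanging $\partial_{y}$ with $\sum_{(n,m)\in\mathbb{Z}^{2}}$, but this is routine: for $z$ in any compact subset of $\mathbb{H}$ both the summands and their $y$-derivatives are dominated by a convergent Gaussian in $(n,m)$, so the exchange is justified by locally uniform convergence on $\mathbb{H}$. Consequently no real obstacle arises in the proof; the identity is the first-derivative companion to Lemma \ref{Lemma47}, and one can recover the latter by applying the operator $\partial_{y}+2/y$ to the formula here and differentiating once more. Its purpose in the paper is as a bookkeeping step preparing the lower-bound estimate used in the proof of Lemma \ref{Lemma45}.
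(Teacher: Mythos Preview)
Your proposal is correct and follows exactly the same approach as the paper, which simply states that the proof is ``similar to the proof of Lemma \ref{Lemma47} (the details are omit here).'' You have accurately spelled out those omitted details---termwise differentiation of the lattice sum, the chain rule on the exponent, and the substitution $\alpha\mapsto 2\alpha$ for the second theta function---and your additional remark justifying the interchange of $\partial_y$ with the double sum is a welcome point of rigor that the paper leaves implicit.
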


\subsection{The analysis of $\frac{\partial}{\partial y}\Big(\theta(\alpha;z)-\sqrt2 \theta(2\alpha;z)\Big)$}

 We use the following expression of theta function, which is a variant of Lemma \ref{Lemma31}.
\begin{lemma}\label{Lemma409} A variant expression of $\theta (\alpha;z)$ is the following
\begin{equation}\aligned\label{PPP3}
\theta (\alpha;z)&=2\sqrt{\frac{y}{\alpha}}\sum_{n=1}^\infty e^{-\alpha \pi yn^2}\vartheta(\frac{y}{\alpha};nx)+\sqrt{\frac{y}{\alpha}}\vartheta_3(\frac{y}{\alpha})\\
&=\sqrt{\frac{y}{\alpha}}\big(1+2\sum_{n=1}^\infty e^{-n^2\pi\frac{y}{\alpha}}\big)+2\sqrt{\frac{y}{\alpha}}\sum_{n=1}^\infty e^{-\alpha \pi y n^2}\vartheta(\frac{y}{\alpha};nx).
\endaligned\end{equation}

\end{lemma}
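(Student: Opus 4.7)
The plan is to prove Lemma \ref{Lemma409} by repeating the Poisson-summation computation from Lemma \ref{Lemma31} and then making the value $\vartheta(X;0)$ explicit. Starting from the definition
\[
\theta(\alpha;z)=\sum_{(m,n)\in\mathbb{Z}^{2}}e^{-\alpha\frac{\pi}{y}|mz+n|^{2}},
\]
I first expand $|mz+n|^{2}=(mx+n)^{2}+m^{2}y^{2}$ and group the sum by the outer index $m$, which gives
\[
\theta(\alpha;z)=\sum_{m\in\mathbb{Z}}e^{-\alpha\pi y m^{2}}\sum_{n\in\mathbb{Z}}e^{-\alpha\pi(mx+n)^{2}/y}.
\]
The inner one-dimensional Gaussian series is exactly the left-hand side of the Poisson identity \eqref{PPP2} with $X=y/\alpha$ and $Y=-mx$, so applying \eqref{PPP2} together with the evenness of $\vartheta(X;\cdot)$ (which is immediate from the product formula \eqref{Product}, since only $\cos(2\pi Y)$ enters) yields
\[
\theta(\alpha;z)=\sqrt{\tfrac{y}{\alpha}}\sum_{m\in\mathbb{Z}}e^{-\alpha\pi y m^{2}}\vartheta(\tfrac{y}{\alpha};mx).
\]

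Next I would isolate the $m=0$ contribution and pair $m$ with $-m$ in the remaining sum, using $\vartheta(X;mx)=\vartheta(X;-mx)$, to arrive at the first displayed identity of Lemma \ref{Lemma409}:
\[
\theta(\alpha;z)=\sqrt{\tfrac{y}{\alpha}}\,\vartheta(\tfrac{y}{\alpha};0)+2\sqrt{\tfrac{y}{\alpha}}\sum_{m=1}^{\infty}e^{-\alpha\pi y m^{2}}\vartheta(\tfrac{y}{\alpha};mx),
\]
after the cosmetic relabelling $m\mapsto n$. To pass to the second displayed form of the lemma it then suffices to evaluate the Fourier series \eqref{TXY} at $Y=0$, which gives
\[
\vartheta_{3}(X):=\vartheta(X;0)=\sum_{n\in\mathbb{Z}}e^{-\pi n^{2}X}=1+2\sum_{n=1}^{\infty}e^{-\pi n^{2}X},
\]
and substitution of this into the first form completes the proof.

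There is essentially no obstacle here: every step reduces either to an algebraic rearrangement of the defining lattice sum or to a direct application of one of the identities (Poisson summation \eqref{PPP2}, the product expansion \eqref{Product}, the Fourier series \eqref{TXY}) already recorded in Section~2. The content of Lemma \ref{Lemma409} is purely a convenient notational repackaging of Lemma \ref{Lemma31}, isolating the $x$-independent contribution as the classical Jacobi theta constant $\vartheta_{3}(y/\alpha)$ so that, in the subsequent analysis of $\partial_{y}$ and $\partial_{y}^{2}$ along the vertical line $\Re z=\frac{1}{2}$, this piece may be differentiated in $y$ transparently without any $x$-dependence mixing into the computation.
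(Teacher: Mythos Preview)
Your proof is correct and follows essentially the same route as the paper: the paper does not give a separate proof of Lemma~\ref{Lemma409} at all, merely calling it ``a variant of Lemma~\ref{Lemma31},'' and your argument simply reproduces the Poisson-summation computation of Lemma~\ref{Lemma31} and then makes the identification $\vartheta(\tfrac{y}{\alpha};0)=\vartheta_{3}(\tfrac{y}{\alpha})=1+2\sum_{n\ge1}e^{-\pi n^{2}y/\alpha}$ explicit.
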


Now we give the proof of Lemma \ref{Lemma41}:

\begin{proof} In view of Lemma \ref{Lemma409}, one has
\begin{equation}\aligned\nonumber
\theta (\alpha;z)&=\sqrt{\frac{y}{\alpha}}\cdot\big(
1+2e^{-\pi\frac{y}{\alpha}}+2e^{-\alpha\pi y}+o(e^{-\pi\frac{y}{\alpha}})+o(e^{-\alpha\pi y})
\big).
\endaligned\end{equation}
Then
\begin{equation}\aligned\nonumber
\theta (\alpha;z)-\beta\theta(2\alpha;z)
&=\sqrt{\frac{y}{2\alpha}}\cdot\Big(
\sqrt2-\beta+2\sqrt2 e^{-\pi\alpha y}-2e^{-\pi\frac{y}{2\alpha}}+o(e^{-\pi\alpha y})+o(e^{-\pi\frac{y}{2\alpha}})
\Big),\\
&=\sqrt{\frac{y}{2\alpha}}\cdot\Big(
\sqrt2-\beta+o(1)
\Big)
\endaligned\end{equation}
Therefore, for $\forall\alpha>0$,
\begin{equation}\aligned\nonumber
\theta (\alpha;z)-\beta\theta(2\alpha;z)
&=\sqrt{\frac{y}{2\alpha}}\cdot\Big(
\sqrt2-\beta+o(1)
\Big)\\
&\mapsto -\infty,\;\;\;\hbox{if}\;\;\;\beta>\sqrt2,
\endaligned\end{equation}
proves the nonexistence result.

\end{proof}

In the next two lemmas(Lemmas \ref{Lemma410} and \ref{Lemma411}), we analyze the two parts of $\theta(\alpha;z)$ in Lemma \ref{Lemma409}.

\begin{lemma}[Analysis of second part arising from Lemma \ref{Lemma409}]\label{Lemma410}Assume that $\alpha\geq1$. If $\frac{y}{\alpha}\geq\frac{4}{5}$, then
\begin{equation}\aligned\nonumber
\frac{\partial}{\partial y}\Big(\sqrt{y}\Big(
\vartheta_3(\frac{y}{\alpha})-\vartheta_3(\frac{y}{2\alpha})
\Big)\Big)>0.
\endaligned\end{equation}

\end{lemma}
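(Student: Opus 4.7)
The plan is to expand both theta functions in their defining Gaussian series, differentiate term by term, and reduce the positivity to the sign of a single explicit one-variable function. Writing $\vartheta_3(t)=1+2\sum_{n\geq 1}e^{-\pi t n^2}$ and introducing
\begin{equation}\nonumber
u_n:=\frac{\pi y n^2}{2\alpha}, \qquad n\geq 1,
\end{equation}
so that $\pi y n^2/\alpha=2u_n$, the quantity to be differentiated collapses to a sum at a single scale:
\begin{equation}\nonumber
\sqrt y\bigl(\vartheta_3(y/\alpha)-\vartheta_3(y/(2\alpha))\bigr)=2\sqrt y\sum_{n\geq 1}\bigl(e^{-2u_n}-e^{-u_n}\bigr).
\end{equation}
Absorbing the two frequencies $1/\alpha$ and $1/(2\alpha)$ into a common variable $u_n$ is the key reduction, since it converts the problem into one about a single elementary profile $u\mapsto e^{-2u}-e^{-u}$.

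Since $du_n/dy=u_n/y$, a short product-rule computation gives termwise
\begin{equation}\nonumber
\frac{d}{dy}\bigl[\sqrt y\,(e^{-2u_n}-e^{-u_n})\bigr]=\frac{e^{-u_n}}{2\sqrt y}\,h(u_n), \qquad h(u):=(1-4u)e^{-u}+(2u-1).
\end{equation}
The series of derivatives converges absolutely and uniformly on every compact subset of $\{y>0\}$, so the termwise interchange is legitimate and
\begin{equation}\nonumber
\frac{\partial}{\partial y}\Bigl(\sqrt y\bigl(\vartheta_3(y/\alpha)-\vartheta_3(y/(2\alpha))\bigr)\Bigr)=\frac{1}{\sqrt y}\sum_{n\geq 1}e^{-u_n}h(u_n).
\end{equation}
It then suffices to prove $h(u_n)>0$ for every $n\geq 1$ under the hypothesis $y/\alpha\geq 4/5$.

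A Taylor expansion shows $h(u)=-3u+\tfrac{9}{2}u^2+O(u^3)$, so $h<0$ for small $u>0$; however $h'(u)=-(5-4u)e^{-u}+2$ changes sign exactly once at some $u_*\approx 0.46$, after which $h$ is strictly increasing on $[u_*,\infty)$ and crosses zero at a unique point $u^\dagger\approx 1.062$. The hypothesis $y/\alpha\geq 4/5$ yields $u_1=\pi y/(2\alpha)\geq 2\pi/5\approx 1.2566$, and a direct numerical evaluation at the threshold gives $h(2\pi/5)>0$. Since $u_n=n^2 u_1\geq u_1$ for every $n\geq 1$, monotonicity of $h$ on $[u_1,\infty)$ delivers $h(u_n)\geq h(u_1)>0$, so every summand is positive and the derivative is strictly positive. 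The main obstacle is precisely this last threshold check: the zero $u^\dagger$ of $h$ sits only slightly below $u_1=2\pi/5$, so the hypothesis $y/\alpha\geq 4/5$ is rather tight for this purely termwise argument and could not be relaxed significantly without invoking the Jacobi modular identity $\vartheta_3(t)=t^{-1/2}\vartheta_3(1/t)$ to treat the dual regime $y/\alpha\to 0^+$ separately.
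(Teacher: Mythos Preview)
Your proof is correct and follows essentially the same termwise strategy as the paper: expand $\vartheta_3$ as a Gaussian series, differentiate term by term, and reduce the positivity to a single one-variable check at $u_n=\pi n^2 y/(2\alpha)\geq 2\pi/5$. Your function $h(u)=(1-4u)e^{-u}+(2u-1)$ coincides, up to the positive factor $2e^{-u}$, with the paper's bracket $(\tfrac{n^2\pi y}{2\alpha}-\tfrac12)e^{n^2\pi y/(2\alpha)}-(\tfrac{n^2\pi y}{\alpha}-\tfrac12)$, and your sign analysis of $h$ is in fact spelled out more carefully than the paper's brief threshold check.
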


\begin{proof} By a straightforward computation, we have

\begin{equation}\aligned\label{gggjjj}
&\frac{\partial}{\partial y}\Big(\sqrt{y}\Big(
\vartheta_3(\frac{y}{\alpha})-\vartheta_3(\frac{y}{2\alpha})
\Big)\Big)\\
=&2\frac{\partial}{\partial y}\Big(\sqrt{y}
\sum_{n=1}^\infty \big(e^{-n^2\pi\frac{y}{\alpha}}-e^{-n^2\pi\frac{y}{2\alpha}}\big)
\Big)\\
=&
\frac{2}{\sqrt{y}}\sum_{n=1}^\infty e^{-n^2\pi\frac{y}{\alpha}}\Big(
(\frac{n^2\pi y}{2\alpha}-\frac{1}{2})e^{n^2\pi \frac{y}{2\alpha}}-(\frac{n^2\pi }{\alpha}-\frac{1}{2})
\Big)
\endaligned\end{equation}
Since $\alpha\geq1,\;\;\frac{y}{\alpha}\geq\frac{4}{5}$, then
\begin{equation}\aligned\nonumber
(\frac{\pi y}{2\alpha}-\frac{1}{2})e^{\pi \frac{y}{2\alpha}}-(\frac{\pi }{\alpha}-\frac{1}{2})
&\geq(\frac{\pi y}{2\alpha}-\frac{1}{2})e^{\pi \frac{y}{2\alpha}}-(\pi-\frac{1}{2})\\
&>0.
\endaligned\end{equation}
Therefore, since each term in the sum of \eqref{gggjjj} is positive, the result then follows.
\end{proof}

To control the error terms, we recall
\begin{equation}\aligned\nonumber
\mu(y):&=\sum_{n=2}^\infty n^2 e^{-\pi y(n^2-1)}\\
\nu(y):&=\sum_{n=2}^\infty  e^{-\pi y(n^2-1)}
\endaligned\end{equation}

\begin{lemma} [The estimate of first part in Lemma \ref{Lemma409}]\label{Lemma411}

\begin{equation}\aligned\nonumber
\frac{\partial}{\partial y}\Big(\sqrt{y}\sum_{n=1}^\infty e^{-\alpha \pi y n^2}\vartheta(\frac{y}{\alpha};nx)\Big)
\leq
\sqrt{y}e^{-\pi y\alpha}\cdot \mathcal{P}(y;\alpha)
\endaligned\end{equation}
where the $\mathcal{P}(y;\alpha)$ can be controlled by some constant and is expressed by

\begin{equation}\aligned\nonumber
\mathcal{P}(y;\alpha):=\sigma_5+\sigma_6+\sigma_7,
\endaligned\end{equation}
and
\begin{equation}\aligned\nonumber
\mathcal{P}(y;\alpha)<\alpha\pi,\;\hbox{as}\;\; y\rightarrow\infty.
\endaligned\end{equation}

Here
\begin{equation}\aligned\nonumber
\sigma_5:&=
\frac{1}{2y}\big(1+2e^{-\pi\frac{y}{\alpha}}(1+\nu(\frac{y}{\alpha}))\big)(1+\nu(y\alpha))\\
\sigma_6:&=\alpha\pi(1+\mu(y\alpha))(1+2e^{-\pi\frac{y}{\alpha}}(1+\nu(\frac{y}{\alpha})))\\
\sigma_7:&=\frac{2\pi}{\alpha}e^{-\pi\frac{y}{\alpha}}(1+\nu(y\alpha))(1+\mu(\frac{y}{\alpha})).
\endaligned\end{equation}

\end{lemma}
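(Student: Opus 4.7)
The plan is to differentiate $\sqrt{y}\sum_{n=1}^\infty e^{-\alpha\pi y n^2}\vartheta(y/\alpha; nx)$ term by term in $y$ and then apply the triangle inequality after separating an ``outer'' sum in $n$ from a ``pointwise'' bound in the second variable of $\vartheta$. Writing the summand as $\sqrt{y}\,e^{-\alpha\pi y n^2}\vartheta(y/\alpha; nx)$ and applying the product rule produces three contributions: a $\frac{1}{2\sqrt{y}}$ factor from $\partial_y\sqrt{y}$, an $-\sqrt{y}\,\alpha\pi n^2$ factor from differentiating the Gaussian, and a $\frac{\sqrt{y}}{\alpha}\,\partial_X\vartheta(y/\alpha;nx)$ factor from the chain rule inside $\vartheta$. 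Upper bounding the total derivative by the sum of the absolute values of the three pieces, each piece decouples into an outer sum over $n$ and a supremum in $Y=nx$ of $|\vartheta(X;Y)|$ or $|\partial_X\vartheta(X;Y)|$.

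The two key pointwise estimates come from the Fourier series $\vartheta(X;Y)=1+2\sum_{n\geq 1}e^{-\pi n^2 X}\cos(2\pi nY)$ by retaining the $n=1$ mode explicitly and dumping $n\geq 2$ into the tail functions $\nu, \mu$:
\begin{equation*}
|\vartheta(X;Y)|\leq 1+2e^{-\pi X}(1+\nu(X)),\qquad |\partial_X\vartheta(X;Y)|\leq 2\pi\, e^{-\pi X}(1+\mu(X)).
\end{equation*}
The two outer sums yield to the identical device by isolating $n=1$:
\begin{equation*}
\sum_{n=1}^\infty e^{-\alpha\pi y n^2}=e^{-\alpha\pi y}(1+\nu(y\alpha)),\qquad \sum_{n=1}^\infty n^2 e^{-\alpha\pi y n^2}=e^{-\alpha\pi y}(1+\mu(y\alpha)).
\end{equation*}

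Inserting these four estimates into the three product-rule contributions and pulling out the common prefactor $\sqrt{y}\,e^{-\pi y\alpha}$ reproduces $\sigma_5+\sigma_6+\sigma_7$ exactly: the $\partial_y\sqrt{y}$ term produces $\sigma_5$, where the $\frac{1}{2y}$ factor arises from $\frac{1}{2\sqrt{y}}\cdot\frac{1}{\sqrt{y}}$; the Gaussian-derivative term produces $\sigma_6$, picking up the $\alpha\pi(1+\mu(y\alpha))$ factor from $\sum n^2 e^{-\alpha\pi yn^2}$ and the $1+2e^{-\pi y/\alpha}(1+\nu(y/\alpha))$ factor from the pointwise bound on $|\vartheta|$; and the $\partial_X\vartheta$ term produces $\sigma_7$, where the smallness factor $\frac{2\pi}{\alpha}e^{-\pi y/\alpha}(1+\mu(y/\alpha))$ comes from the $\partial_X\vartheta$ estimate above. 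The asymptotic statement $\mathcal{P}(y;\alpha)\to\alpha\pi$ as $y\to\infty$ is then an inspection: $\mu$ and $\nu$ at both arguments, together with $e^{-\pi y/\alpha}$, all vanish exponentially, leaving $\sigma_5,\sigma_7\to 0$ and $\sigma_6\to\alpha\pi$.

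There is no conceptual obstacle here, only bookkeeping; the main care needed is to match exponentials and tail factors with the precise definitions of $\sigma_5,\sigma_6,\sigma_7$. The one place where one must be slightly alert is the sign: the Gaussian-derivative contribution is negative whereas the other two may have either sign, so the triangle-inequality bound overstates the true derivative. This is harmless for the present purpose because the lemma is used downstream only as an upper bound on the ``first part'' of $\theta$ in the estimate of $\mathcal{Q}(y;\alpha)$ in Lemma~\ref{Lemma45}.
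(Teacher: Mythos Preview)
Your proposal is correct and follows essentially the same approach as the paper: product-rule decomposition into three pieces $I_1,I_2,I_3$, then triangle inequality with the pointwise bounds $|\vartheta(X;Y)|\le 1+2e^{-\pi X}(1+\nu(X))$ and $|\partial_X\vartheta(X;Y)|\le 2\pi e^{-\pi X}(1+\mu(X))$, together with the outer-sum factorizations $\sum_{n\ge1}e^{-\alpha\pi yn^2}=e^{-\alpha\pi y}(1+\nu(y\alpha))$ and $\sum_{n\ge1}n^2e^{-\alpha\pi yn^2}=e^{-\alpha\pi y}(1+\mu(y\alpha))$, which recovers $\sigma_5,\sigma_6,\sigma_7$ exactly.
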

\begin{remark} Lemma \ref{Lemma411} shows that
$$\frac{\partial}{\partial y}\Big(\sqrt{y}\sum_{n=1}^\infty e^{-\alpha \pi y n^2}\vartheta(\frac{y}{\alpha};nx)\Big)$$
is small in related estimates.

\end{remark}

\begin{proof}  A direct calculation shows that
\begin{equation}\aligned\nonumber
\frac{\partial}{\partial y}\Big(\sqrt{y}\sum_{n=1}^\infty e^{-\alpha \pi y n^2}\vartheta(\frac{y}{\alpha};nx)\Big)
=&\frac{1}{2\sqrt{y}}\sum_{n=1}^\infty e^{-\alpha \pi y n^2}\vartheta(\frac{y}{\alpha};nx)
+\sqrt{y}\sum_{n=1}^\infty(-\alpha\pi y) e^{-\alpha \pi y n^2}\vartheta(\frac{y}{\alpha};nx)\\
&+\sqrt{y}\sum_{n=1}^\infty(-\alpha\pi y) e^{-\alpha \pi y n^2}\frac{1}{\alpha}\frac{\partial}{\partial X}\vartheta(\frac{y}{\alpha};nx).
\endaligned\end{equation}
For convenience, we denote that
\begin{equation}\aligned\nonumber
I_1:=&\frac{1}{2\sqrt{y}}\sum_{n=1}^\infty e^{-\alpha \pi y n^2}\vartheta(\frac{y}{\alpha};nx)\\
I_2:=&\sqrt{y}\sum_{n=1}^\infty(-\alpha\pi n^2) e^{-\alpha \pi y n^2}\vartheta(\frac{y}{\alpha};nx)\\
I_3:=&\sqrt{y}\sum_{n=1}^\infty e^{-\alpha \pi y n^2}\frac{1}{\alpha}\frac{\partial}{\partial X}\vartheta(\frac{y}{\alpha};nx).
\endaligned\end{equation}
Then
\begin{equation}\aligned\label{4I123}
\frac{\partial}{\partial y}\Big(\sqrt{y}\sum_{n=1}^\infty e^{-\alpha \pi y n^2}\vartheta(\frac{y}{\alpha};nx)\Big)
=I_1+I_2+I_3.
\endaligned\end{equation}
Next, we estimate $I_j,j=1,2,3$ in order.
For $I_1$,

\begin{equation}\aligned\label{4I1}
|I_1|&=|\frac{1}{2\sqrt{y}}\sum_{n=1}^\infty e^{-\alpha \pi y n^2}\vartheta(\frac{y}{\alpha};nx)|\\
&\leq\frac{1}{2\sqrt{y}}\sum_{n=1}^\infty e^{-\alpha \pi y n^2}|\vartheta(\frac{y}{\alpha};nx)|\\
&\leq\frac{1}{2\sqrt{y}}\sum_{n=1}^\infty e^{-\alpha \pi y n^2}(1+2\sum_{n=1}^\infty e^{-n^2\pi \frac{y}{\alpha}})\\
&=\frac{1}{2\sqrt{y}} e^{-\alpha\pi y}\big(1+\sum_{n=2}^\infty e^{-\alpha\pi y(n^2-1)}\big)\cdot\big(
1+2e^{-\pi\frac{y}{\alpha}}(1+\sum_{n=2}^\infty e^{-\pi\frac{y}{\alpha}(n^2-1)})
\big)\\
&=\frac{1}{2\sqrt{y}} e^{-\alpha\pi y}\big(
1+\nu(y\alpha)
\big)\big(
1+2 e^{-\pi\frac{y}{\alpha}}(1+\nu(\frac{y}{\alpha}))
\big).
\endaligned\end{equation}

And $I_2$,
\begin{equation}\aligned\label{4I2}
|I_2|&=|\sqrt{y}\sum_{n=1}^\infty(-\alpha\pi n^2) e^{-\alpha \pi y n^2}\vartheta(\frac{y}{\alpha};nx)|\\
&\leq\alpha\pi\sqrt{y}\sum_{n=1}^\infty n^2 e^{-\alpha \pi y n^2}|\vartheta(\frac{y}{\alpha};nx)|\\
&\leq\alpha\pi\sqrt{y}\sum_{n=1}^\infty n^2 e^{-\alpha \pi y n^2}\vartheta_3(\frac{y}{\alpha})\\
&=\alpha\pi\sqrt{y}\sum_{n=1}^\infty n^2 e^{-\alpha \pi y n^2}\big(1+2\sum_{n=1}^\infty e^{-\pi n^2\frac{y}{\alpha}}\big)\\
&=\alpha\pi\sqrt{y} e^{-\alpha\pi y}\big(1+\sum_{n=2}^\infty e^{-\alpha \pi y (n^2-1)} \big)\cdot
\big(1+2e^{-\pi\frac{y}{\alpha}}(1+\sum_{n=2}^\infty e^{-\pi (n^2-1)\frac{y}{\alpha}})\big)\\
&=\alpha\pi\sqrt{y} e^{-\alpha\pi y}\big(1+\mu(y\alpha) \big)\cdot
\big(1+2e^{-\pi\frac{y}{\alpha}}(1+\nu(\frac{y}{\alpha}))\big).
\endaligned\end{equation}

The $I_3$ is estimated by
\begin{equation}\aligned\label{4I3}
|I_3|=&|\sqrt{y}\sum_{n=1}^\infty e^{-\alpha \pi y n^2}\frac{1}{\alpha}\frac{\partial}{\partial X}\vartheta(\frac{y}{\alpha};nx)|\\
\leq&2\pi\sqrt{y}\frac{1}{\alpha}\sum_{n=1}^\infty e^{-\alpha \pi y n^2}\cdot\sum_{n=1}^\infty n^2 e^{-\pi n^2\frac{y}{\alpha}}\\
=&\frac{2}{\alpha}\pi\sqrt{y} e^{-\alpha\pi y} e^{-\pi\frac{y}{\alpha}}\big(1+\sum_{n=2}^\infty e^{-\alpha\pi y(n^2-1)}\big)
\cdot\sum_{n=2}^\infty n^2 e^{-\pi (n^2-1)\frac{y}{\alpha}}\\
=&\frac{2}{\alpha}\pi\sqrt{y} e^{-\pi y(\alpha+\frac{1}{\alpha})}\cdot(1+\nu(y\alpha))\cdot(1+\mu(\frac{y}{\alpha})).
\endaligned\end{equation}

The \eqref{4I123} togethers with \eqref{4I1}, \eqref{4I2} and \eqref{4I3} yield the result.

\end{proof}


The following lemma is a variant of Lemma \ref{Lemma45}.

\begin{lemma} \label{Lemmalower}
 Assume that $\alpha\geq1$. If $\frac{y}{\alpha}\geq\frac{4}{5}$, then it holds
\begin{equation}\aligned\nonumber
\frac{\partial}{\partial y}\Big(\theta (\alpha;z)-\sqrt2\theta (2\alpha;z)
\Big)&\geq
\frac{2}{\sqrt{y\alpha}}e^{-\pi\frac{y}{2\alpha}}\cdot \mathcal{Q}(y;\alpha),
\endaligned\end{equation}
where
\begin{equation}\aligned\nonumber
\mathcal{Q}(y;\alpha):=\frac{\pi y}{2\alpha}-\frac{1}{2}-(\frac{\pi}{\alpha}-\frac{1}{2}) e^{-\pi\frac{y}{2\alpha}}
-ye^{-\pi y(\alpha-\frac{1}{2\alpha})}\mathcal{P}(y;\alpha)
-ye^{-\pi y(2\alpha-\frac{1}{2\alpha})}\mathcal{P}(y;2\alpha),
\endaligned\end{equation}
and $\mathcal{P}(y;\alpha)$ is introduced in Lemma \ref{Lemma411}.
\end{lemma}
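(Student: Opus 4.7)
The strategy is to apply the Fourier-type splitting of Lemma \ref{Lemma409} to both $\theta(\alpha;z)$ and $\theta(2\alpha;z)$, producing for $\theta(\alpha;z)-\sqrt2\,\theta(2\alpha;z)$ an ``$x$-independent main part'' built from $\vartheta_3$ and an ``$x$-dependent remainder'' built from the one-dimensional theta $\vartheta(\cdot\,;\cdot)$. The key algebraic observation is that $\sqrt{2}\,\sqrt{y/(2\alpha)}=\sqrt{y/\alpha}$, which forces the prefactors to match so that
\begin{equation*}
\theta(\alpha;z)-\sqrt2\,\theta(2\alpha;z)=\sqrt{y/\alpha}\bigl(\vartheta_3(y/\alpha)-\vartheta_3(y/(2\alpha))\bigr)+2\sqrt{y/\alpha}\sum_{n\geq 1}\bigl(e^{-\alpha\pi y n^2}\vartheta(y/\alpha;nx)-e^{-2\alpha\pi y n^2}\vartheta(y/(2\alpha);nx)\bigr).
\end{equation*}
I would then differentiate in $y$ and estimate the two pieces independently.

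For the main (first) part, differentiating termwise and factoring out $e^{-\pi y/(2\alpha)}$ isolates the $n=1$ contribution
\begin{equation*}
\frac{2}{\sqrt{y\alpha}}\,e^{-\pi y/(2\alpha)}\Bigl[\bigl(\tfrac{\pi y}{2\alpha}-\tfrac12\bigr)-\bigl(\tfrac{\pi y}{\alpha}-\tfrac12\bigr)e^{-\pi y/(2\alpha)}\Bigr],
\end{equation*}
which is precisely the first two summands of $\mathcal{Q}(y;\alpha)$ (up to the convention used in stating $\mathcal{Q}$). The hypothesis $y/\alpha\geq 4/5$ enters here, in exactly the manner it was used in Lemma \ref{Lemma410}: for every $n\geq 2$ the bracketed factor $(\tfrac{n^2\pi y}{2\alpha}-\tfrac12)e^{n^2\pi y/(2\alpha)}-(\tfrac{n^2\pi y}{\alpha}-\tfrac12)$ is nonnegative, since the exponential dominates the linear growth once $n^2\pi y/(2\alpha)$ is bounded below by a small constant. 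Hence dropping the $n\geq 2$ tail preserves a valid lower bound.

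For the $x$-dependent remainder I would apply Lemma \ref{Lemma411} twice, once with parameter $\alpha$ and once with $2\alpha$, which immediately yields
\begin{equation*}
\Bigl|\tfrac{\partial}{\partial y}\bigl(B(\alpha;z)-\sqrt2\,B(2\alpha;z)\bigr)\Bigr|\leq \tfrac{2}{\sqrt{\alpha}}\sqrt{y}\,e^{-\pi y\alpha}\mathcal{P}(y;\alpha)+\tfrac{2}{\sqrt{\alpha}}\sqrt{y}\,e^{-2\pi y\alpha}\mathcal{P}(y;2\alpha),
\end{equation*}
where $B(\alpha;z)$ denotes the second summand in the splitting above. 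Subtracting this from the main-part lower bound and pulling out the common factor $\tfrac{2}{\sqrt{y\alpha}}e^{-\pi y/(2\alpha)}$ converts the two remainder contributions into $y\,e^{-\pi y(\alpha-1/(2\alpha))}\mathcal{P}(y;\alpha)$ and $y\,e^{-\pi y(2\alpha-1/(2\alpha))}\mathcal{P}(y;2\alpha)$, thereby reassembling exactly the four summands of $\mathcal{Q}(y;\alpha)$.

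The chief technical point will be the monotonicity/positivity verification for the $n\geq 2$ tail of the main part under $y/\alpha\geq 4/5$, which is the direct quantitative analogue of the calculation in Lemma \ref{Lemma410}; everything else is bookkeeping that stitches the $\vartheta_3$-derivative computation together with the off-the-shelf remainder bound of Lemma \ref{Lemma411}.
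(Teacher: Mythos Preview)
Your proposal is correct and follows essentially the same route as the paper: split via Lemma~\ref{Lemma409}, use the termwise derivative computation of Lemma~\ref{Lemma410} for the $\vartheta_3$-part (keeping the $n=1$ term and discarding the nonnegative $n\geq 2$ tail under $y/\alpha\geq 4/5$), and bound the $x$-dependent remainder by two applications of Lemma~\ref{Lemma411}. Your caveat ``up to the convention used in stating $\mathcal{Q}$'' is well placed: the correct bracket from the $n=1$ term is $(\tfrac{\pi y}{\alpha}-\tfrac12)$ rather than the $(\tfrac{\pi}{\alpha}-\tfrac12)$ that appears in the displayed definition of $\mathcal{Q}$, so the statement as printed carries a typo that your computation in fact repairs.
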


\begin{proof} By Lemmas \ref{Lemma409}, \ref{Lemma410} and \ref{Lemma411},
\begin{equation}\aligned\label{Lhhh}
\frac{\sqrt{\alpha}}{2}\frac{\partial}{\partial y}\Big(\theta (\alpha;z)-\sqrt2\theta (2\alpha;z)
\Big)&\geq
\frac{1}{\sqrt{y}}\sum_{n=1}^\infty e^{-n^2\pi\frac{y}{\alpha}}\Big(
(\frac{n^2\pi y}{2\alpha}-\frac{1}{2})e^{n^2\pi \frac{y}{2\alpha}}-(\frac{n^2\pi }{\alpha}-\frac{1}{2})
\Big)\\
&
-\sqrt{y} e^{-\pi y\alpha}\mathcal{P}(y;\alpha)
-\sqrt{y} e^{-2\pi y\alpha}\mathcal{P}(y;2\alpha)\\
&=\frac{1}{\sqrt{y}} e^{-\pi\frac{y}{\alpha}}\Big(
(\frac{\pi y}{2\alpha}-\frac{1}{2})e^{\pi \frac{y}{2\alpha}}-(\frac{\pi }{\alpha}-\frac{1}{2})
\Big)\\
&
-\sqrt{y} e^{-\pi y\alpha}\mathcal{P}(y;\alpha)
-\sqrt{y} e^{-2\pi y\alpha}\mathcal{P}(y;2\alpha)\\
&+\frac{1}{\sqrt{y}}\sum_{n=2}^\infty e^{-n^2\pi\frac{y}{\alpha}}\Big(
(\frac{n^2\pi y}{2\alpha}-\frac{1}{2})e^{n^2\pi \frac{y}{2\alpha}}-(\frac{n^2\pi }{\alpha}-\frac{1}{2})
\Big)\\
&=\frac{1}{\sqrt{y}}e^{-\pi\frac{y}{2\alpha}}\cdot \mathcal{Q}(y;\alpha)+\mathcal{R}_0(y;\alpha).
\endaligned\end{equation}
Here
\begin{equation}\aligned\nonumber
\mathcal{R}_0(y;\alpha):=\frac{1}{\sqrt{y}}\sum_{n=2}^\infty e^{-n^2\pi\frac{y}{\alpha}}\Big(
(\frac{n^2\pi y}{2\alpha}-\frac{1}{2})e^{n^2\pi \frac{y}{2\alpha}}-(\frac{n^2\pi }{\alpha}-\frac{1}{2})
\Big).
\endaligned\end{equation}
Since $\alpha\geq1,\;\;\frac{y}{\alpha}\geq\frac{4}{5}$,
\begin{equation}\aligned\nonumber
(\frac{\pi y}{2\alpha}-\frac{1}{2})e^{\pi \frac{y}{2\alpha}}-(\frac{\pi }{\alpha}-\frac{1}{2})
&\geq(\frac{\pi y}{2\alpha}-\frac{1}{2})e^{\pi \frac{y}{2\alpha}}-(\pi-\frac{1}{2})\\
&>0.
\endaligned\end{equation}

Then trivially,
\begin{equation}\aligned\nonumber
\mathcal{R}_0(y;\alpha)>0.
\endaligned\end{equation}
Then the estimate follows by \eqref{Lhhh}.

\end{proof}

\begin{lemma}[The upper bounds of $y\cdot\mathcal{P}(y;\alpha)$ and $y\cdot\mathcal{P}(y;2\alpha)$]\label{Lemma413a} Assume that $\alpha\geq1 , y\geq\frac{\sqrt3}{2}$. If $\frac{y}{\alpha}\geq1$, then
\begin{equation}\aligned\nonumber
y\cdot\mathcal{P}(y;\alpha)&\leq4.232412\cdots,\\
y\cdot\mathcal{P}(y;2\alpha)&\leq10.268696\cdots.
\endaligned\end{equation}
\end{lemma}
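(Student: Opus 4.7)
The plan is to bound the three summands $y\sigma_5$, $y\sigma_6$, $y\sigma_7$ making up $y\cdot\mathcal{P}(y;\alpha)$ individually, using monotonicity of the auxiliary functions $\mu$ and $\nu$, and then add them. The companion estimate $y\cdot\mathcal{P}(y;2\alpha)\leq 10.268696\ldots$ follows from the same argument with $\alpha$ replaced by $2\alpha$ throughout.

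The starting observation is that $\mu(t)=\sum_{n\geq 2}n^{2}e^{-\pi t(n^{2}-1)}$ and $\nu(t)=\sum_{n\geq 2}e^{-\pi t(n^{2}-1)}$ are strictly decreasing in $t>0$, since each summand is. The hypotheses $\alpha\geq 1$, $y\geq\sqrt{3}/2$, $y/\alpha\geq 1$ force $y\geq\alpha\geq 1$, and hence both $y/\alpha\geq 1$ and $y\alpha\geq 1$. Consequently $\mu(y/\alpha),\mu(y\alpha)\leq\mu(1)$ and $\nu(y/\alpha),\nu(y\alpha)\leq\nu(1)$; the leading terms $4e^{-3\pi}$ and $e^{-3\pi}$ show these corrections are numerically negligible. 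Likewise, since $t\mapsto te^{-\pi t}$ is decreasing on $[1/\pi,\infty)\supset[1,\infty)$, one has $(y/\alpha)e^{-\pi y/\alpha}\leq e^{-\pi}$.

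With these reductions the estimates become essentially mechanical. For $y\sigma_{5}=\tfrac{1}{2}(1+2e^{-\pi y/\alpha}(1+\nu(y/\alpha)))(1+\nu(y\alpha))$ I would substitute the extremal values at $y/\alpha = y\alpha = 1$ to obtain a constant just above $1/2$. For $y\sigma_{7}=\tfrac{2\pi y}{\alpha}e^{-\pi y/\alpha}(1+\nu(y\alpha))(1+\mu(y/\alpha))$, the estimate on $te^{-\pi t}$ above yields a bound of order $2\pi e^{-\pi}(1+o(1))$. For $y\sigma_{6}=\pi y\alpha(1+\mu(y\alpha))(1+2e^{-\pi y/\alpha}(1+\nu(y/\alpha)))$, one replaces all small correction factors by their suprema, leaving a dominant $\pi y\alpha$ contribution.

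The delicate point lies in controlling $y\sigma_{6}$, since among the three summands it is the only one in which $y\alpha$ enters as a free factor not intrinsically damped. Once the extremal configuration allowed by the hypotheses is correctly identified (cross-referencing the subsequent use of this lemma in Lemma \ref{Lemma46}, where the companion constraint $y\geq 1.15\alpha$ and the absorbing exponential $e^{-\pi y(\alpha-1/(2\alpha))}$ coming from Lemma \ref{Lemma45} are in play), one sums the three bounds and performs the numerical evaluation to obtain $4.232412\ldots$, and the identical scheme with $2\alpha$ in place of $\alpha$ yields $10.268696\ldots$.
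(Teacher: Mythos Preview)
Your approach is essentially the same as the paper's: bound the factors involving $\mu$ and $\nu$ by monotonicity (using $y\alpha\geq 1$ and $y/\alpha\geq 1$ under the hypotheses), and---crucially---recognize that the unbounded factor $\pi\alpha y$ in $y\sigma_6$ has to be absorbed by the companion exponential $e^{-\pi y(\alpha-1/(2\alpha))}$ appearing in $\mathcal{Q}$. The paper makes this same move explicitly, rewriting $\alpha y\, e^{-\pi y(\alpha-1/(2\alpha))}$ as a function of $y/\alpha$ and invoking that $xe^{-Ax}$ is decreasing for $x\geq 1/A$, so what is effectively being bounded is the product $y\,e^{-\pi y(\alpha-1/(2\alpha))}\mathcal{P}(y;\alpha)$ rather than $y\mathcal{P}(y;\alpha)$ in isolation---exactly as you surmise.
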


\begin{proof} In view of the expression $y\cdot\mathcal{P}(y;\alpha), y\cdot\mathcal{P}(y;2\alpha)$. The only technical part is to control
\begin{equation}\aligned\nonumber
\alpha y e^{-\pi y(\alpha-\frac{1}{2\alpha})},  \alpha y e^{-\pi y(2\alpha-\frac{1}{2\alpha})}.
\endaligned\end{equation}
These two terms are similar. We estimate by
\begin{equation}\aligned\label{hhh000}
\alpha y e^{-\pi y(\alpha-\frac{1}{2\alpha})}
&=\alpha^2\cdot\frac{y}{\alpha}e^{-\pi \alpha^2\cdot \frac{y}{\alpha}\cdot(1-\frac{1}{2\alpha^2})}\\
&\geq\frac{y}{\alpha}e^{-\pi \frac{y}{\alpha}\cdot(1-\frac{1}{2\alpha^2})},
\endaligned\end{equation}
where we shall control the growth of $\alpha$ by the monotonically decreasing of $xe^{-A\cdot x}$ as $x\geq\frac{1}{A}$.
Similar to \eqref{hhh000}
\begin{equation}\aligned\label{hhh111}
\alpha y e^{-\pi y(2\alpha-\frac{1}{2\alpha})}
\geq\frac{y}{\alpha}e^{-\pi \frac{y}{\alpha}\cdot(2-\frac{1}{2\alpha^2})}.
\endaligned\end{equation}
Then we can view $\frac{y}{\alpha}$ as an variable in estimates.
The rest of estimate using the decreasing of $\mu,\nu$. Namely,
\begin{equation}\aligned\nonumber
&\mu(y\alpha)\leq\mu(\frac{\sqrt3}{2}), \mu(y\alpha)\leq\mu(1);\\
&\nu(y\alpha)\leq\nu(\frac{\sqrt3}{2}), \nu(y\alpha)\leq\nu(1).
\endaligned\end{equation}
Here $\alpha\geq1, y\geq\frac{\sqrt3}{2}$ and $\frac{y}{\alpha}\geq1$ used.
\end{proof}

\begin{lemma}\label{Lemma4L} Assume that $\alpha\geq1$, if $\frac{y}{\alpha}\geq1.15$, then
\begin{equation}\aligned\nonumber
\mathcal{Q}(y;\alpha)>0,
\endaligned\end{equation}
where
\begin{equation}\aligned\nonumber
\mathcal{Q}(y;\alpha)=\frac{\pi y}{2\alpha}-\frac{1}{2}-(\frac{\pi}{\alpha}-\frac{1}{2}) e^{-\pi\frac{y}{2\alpha}}
-ye^{-\pi y(\alpha-\frac{1}{2\alpha})}\mathcal{P}(y;\alpha)
-ye^{-\pi y(2\alpha-\frac{1}{2\alpha})}\mathcal{P}(y;2\alpha)
\endaligned\end{equation}
 is defined in Lemma \ref{Lemmalower}.
\end{lemma}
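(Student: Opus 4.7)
The plan is to reduce the two-variable inequality $\mathcal{Q}(y;\alpha)>0$ to a one-variable inequality in the scaled variable
$$t:=\frac{y}{\alpha}\ \geq\ 1.15,$$
and then verify the one-variable inequality by monotonicity together with a single boundary evaluation. Throughout I will use the hypothesis $\alpha\geq 1$, $y\geq 1.15\alpha\geq 1.15\geq\sqrt 3/2$, which in particular means Lemma \ref{Lemma413a} is applicable.

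First I would rewrite every exponent appearing in $\mathcal{Q}(y;\alpha)$ in the form ``$t$ times a function of $\alpha$'': namely
$$\frac{\pi y}{2\alpha}=\frac{\pi t}{2},\qquad \pi y\!\left(\alpha-\tfrac{1}{2\alpha}\right)=\pi t\!\left(\alpha^2-\tfrac12\right),\qquad \pi y\!\left(2\alpha-\tfrac{1}{2\alpha}\right)=\pi t\!\left(2\alpha^2-\tfrac12\right).$$
Next I would invoke Lemma \ref{Lemma413a} to replace the $\mathcal{P}$-factors by explicit constants: $y\,\mathcal{P}(y;\alpha)\leq C_1$ and $y\,\mathcal{P}(y;2\alpha)\leq C_2$ with $C_1:=4.233$ and $C_2:=10.269$. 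This already converts $\mathcal{Q}(y;\alpha)$ into a lower bound depending only on the scalars $t$ and $\alpha$.

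The second step is to eliminate $\alpha$ by monotonicity. For $\alpha\geq 1$ one has $\frac{\pi}{\alpha}-\frac12\leq\pi-\frac12$, $\alpha^2-\frac12\geq\frac12$ and $2\alpha^2-\frac12\geq\frac32$, so
$$e^{-\pi t(\alpha^2-1/2)}\leq e^{-\pi t/2},\qquad e^{-\pi t(2\alpha^2-1/2)}\leq e^{-3\pi t/2}.$$
Combining these with the previous step yields
$$\mathcal{Q}(y;\alpha)\ \geq\ h(t)\ :=\ \frac{\pi t}{2}-\frac{1}{2}-\Big(\pi-\tfrac12+C_1\Big)e^{-\pi t/2}-C_2\,e^{-3\pi t/2},$$
so it suffices to prove $h(t)>0$ for all $t\geq 1.15$.

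Finally, differentiating gives
$$h'(t)=\frac{\pi}{2}+\frac{\pi}{2}\Big(\pi-\tfrac12+C_1\Big)e^{-\pi t/2}+\frac{3\pi}{2}C_2\,e^{-3\pi t/2}>0,$$
so $h$ is strictly increasing; positivity on $[1.15,\infty)$ reduces to the single numerical check $h(1.15)>0$, which I would verify by a direct (explicit) evaluation. The only delicate point of the whole argument is this last boundary check: with the constants from Lemma \ref{Lemma413a} the value $h(1.15)$ is small (of order $0.1$), so the constants $C_1,C_2$ must be numerically tight; any looser bookkeeping in the error terms of $\mathcal{P}$ would fail at $t=1.15$. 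This is the step I expect to be the main obstacle, and it is the reason the threshold is stated as $1.15$ rather than, say, $1$: the quantitative slack is used up almost entirely at the boundary.
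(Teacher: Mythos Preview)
Your proposal is correct and follows essentially the same route as the paper: both introduce the scaling variable $t=y/\alpha$, invoke Lemma \ref{Lemma413a} to replace $y\mathcal{P}(y;\alpha)$ and $y\mathcal{P}(y;2\alpha)$ by explicit constants, use $\alpha\ge 1$ to reduce the exponents $\pi t(\alpha^{2}-\tfrac12)\ge \tfrac{\pi t}{2}$ and $\pi t(2\alpha^{2}-\tfrac12)\ge \tfrac{3\pi t}{2}$, and then check positivity of the resulting one-variable function on $[1.15,\infty)$. The only cosmetic differences are that the paper rounds the constants of Lemma \ref{Lemma413a} up to $4.5$ and $10.5$ (yielding the combined coefficient $\pi+4$) and records the exact crossing point $t\approx 1.1264$ rather than arguing via $h'>0$ plus $h(1.15)>0$.
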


\begin{proof} The proof follows from Lemma \ref{Lemma413a}:
\begin{equation}\aligned\label{qqq111}
\mathcal{Q}(y;\alpha)&=\frac{\pi y}{2\alpha}-\frac{1}{2}-(\frac{\pi}{\alpha}-\frac{1}{2}) e^{-\pi\frac{y}{2\alpha}}
-ye^{-\pi y(\alpha-\frac{1}{2\alpha})}\mathcal{P}(y;\alpha)
-ye^{-\pi y(2\alpha-\frac{1}{2\alpha})}\mathcal{P}(y;2\alpha)\\
&\geq\frac{\pi y}{2\alpha}-\frac{1}{2}-(\pi-\frac{1}{2}) e^{-\pi\frac{y}{2\alpha}}
-4.5e^{-\pi y(\alpha-\frac{1}{2\alpha})}
-10.5e^{-\pi y(2\alpha-\frac{1}{2\alpha})}\\
&=\frac{\pi}{2}\frac{y}{\alpha}-\frac{1}{2}-(\pi-\frac{1}{2}) e^{-\frac{\pi}{2}\frac{y}{\alpha}}
-4.5e^{-\alpha^2\cdot\pi \frac{y}{\alpha}(1-\frac{1}{2\alpha^2})}
-10.5e^{-\alpha^2\cdot\pi \frac{y}{\alpha}(2-\frac{1}{2\alpha^2})}\\
&\geq\frac{\pi}{2}\frac{y}{\alpha}-\frac{1}{2}-(\pi+4) e^{-\frac{\pi}{2}\frac{y}{\alpha}}
-10.5e^{-\frac{3\pi}{2}\frac{y}{\alpha}},
\endaligned\end{equation}
where $\alpha\geq1$ is used.
Next, a simple calculation shows that
\begin{equation}\aligned\nonumber
\frac{\pi}{2}\cdot x -\frac{1}{2}-(\pi+4) e^{-\frac{\pi}{2}\cdot x}
-10.5e^{-\frac{3\pi}{2}\cdot x}>0\Leftrightarrow x>1.126371\cdots.
\endaligned\end{equation}
Then by \eqref{qqq111},
\begin{equation}\aligned\label{qqq111}
\mathcal{Q}(y;\alpha)
&\geq\frac{\pi}{2}\frac{y}{\alpha}-\frac{1}{2}-(\pi+4) e^{-\frac{\pi}{2}\frac{y}{\alpha}}
-10.5e^{-\frac{3\pi}{2}\frac{y}{\alpha}}\\
&>0\;\;\;\;\hbox{if}\;\;\;\; \frac{y}{\alpha}>1.126371\cdots,
\endaligned\end{equation}
yields the result.

\end{proof}

\subsection{The estimates of $(\frac{\partial^2}{\partial y^2}+\frac{2}{y}\frac{\partial}{\partial y})\Big(\theta(\alpha;\frac{1}{2}+i y)-\sqrt2\theta(\alpha;\frac{1}{2}+i y)\Big)$}

The following Lemma is a particular case of Lemma \ref{Lemma47}, our analysis relies on this expression.
\begin{lemma}\label{Lemma413} The identity for $\big(\frac{\partial^2}{\partial y^2}+\frac{2}{y}\frac{\partial}{\partial y}\big)\big(\theta(\alpha;\frac{1}{2}+i y)-\sqrt2\theta(2\alpha;\frac{1}{2}+i y)\big)$ holds
 \begin{equation}\nonumber\aligned
\big(\frac{\partial^2}{\partial y^2}+\frac{2}{y}\frac{\partial}{\partial y}\big)&\big(\theta(\alpha;\frac{1}{2}+i y)-\sqrt2\theta(2\alpha;\frac{1}{2}+i y)\big)=(\pi\alpha)^2\sum_{n,m} (n^2-\frac{(m+\frac{n}{2})^2}{y^2})^2e^{-\pi\alpha(yn^2+\frac{(m+\frac{n}{2})^2}{y})}\\
&+\frac{4\sqrt2\pi\alpha}{y}\sum_{n,m}n^2e^{-2\pi\alpha(yn^2+\frac{(m+\frac{n}{2})^2}{y})}
-\frac{2\pi\alpha}{y}\sum_{n,m}n^2e^{-\pi\alpha(yn^2+\frac{(m+\frac{n}{2})^2}{y})}\\
&-4\sqrt2(\pi\alpha)^2\sum_{n,m} (n^2-\frac{(m+\frac{n}{2})^2}{y^2})^2e^{-2\pi\alpha(yn^2+\frac{(m+\frac{n}{2})^2}{y})}.
\endaligned\end{equation}

\end{lemma}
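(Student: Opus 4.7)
The proposed proof is a direct specialization of Lemma \ref{Lemma47}. That lemma, already established in the preceding subsection, provides the identity
\begin{equation*}
\bigl(\tfrac{\partial^2}{\partial y^2}+\tfrac{2}{y}\tfrac{\partial}{\partial y}\bigr)\bigl(\theta(\alpha;z)-\sqrt2\,\theta(2\alpha;z)\bigr)
\end{equation*}
as a sum of four double series in $(m,n)$, with the real part $x$ of $z = x+iy$ appearing only through the combination $m+nx$ in the Gaussian exponents. The plan is simply to substitute $x=\tfrac{1}{2}$, whereupon $(m+nx)^2$ becomes $(m+\tfrac{n}{2})^2$ throughout, and the four resulting terms are exactly those listed in the statement.

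Concretely, I would begin by recalling the Lambert-style expansion $\theta(\alpha;z)=\sum_{(m,n)\in\mathbb{Z}^2}e^{-\pi\alpha(yn^2+(m+nx)^2/y)}$ used to derive Lemma \ref{Lemma47}, note that it is jointly analytic in $(x,y)\in\mathbb{R}\times(0,\infty)$, and then invoke Lemma \ref{Lemma47} at the point $z=\tfrac{1}{2}+iy$. Since the expansion and its derivatives are uniformly majorized by rapidly convergent Gaussian sums independent of $x\in\mathbb{R}$, specializing the identity pointwise in $x$ is legitimate and produces the claimed formula with no further work.

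There is no substantive obstacle here: the lemma is a reformulation rather than a new computation. The only minor bookkeeping is to carry the substitution through all four terms carefully, checking that the coefficient $(\pi\alpha)^2$ in the first sum, the coefficients $\tfrac{4\sqrt2\pi\alpha}{y}$ and $-\tfrac{2\pi\alpha}{y}$ in the two middle sums, and the coefficient $-4\sqrt2(\pi\alpha)^2$ in the last sum all transfer verbatim from Lemma \ref{Lemma47}. After that the identity is established, and the purpose of isolating this particular case is purely practical: subsequent sections analyze the convexity of $\theta(\alpha;\tfrac{1}{2}+iy)-\sqrt2\,\theta(2\alpha;\tfrac{1}{2}+iy)$ along the vertical line $\Gamma$, and having the formula pre-specialized to $x=\tfrac{1}{2}$ streamlines the term-by-term lower bounds developed in Lemma \ref{Lemma42}.
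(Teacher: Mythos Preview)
Your proposal is correct and matches the paper's approach exactly: the paper introduces Lemma \ref{Lemma413} simply as ``a particular case of Lemma \ref{Lemma47}'' with no further argument, so your specialization $x=\tfrac{1}{2}$ in the identity of Lemma \ref{Lemma47} is precisely what is intended.
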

There are two types of double sums appeared in Lemma \ref{Lemma413}$($with slightly different frequencies$)$, as follows
\begin{equation}\aligned\label{gghh}
\hbox{double sum A}:&=\sum_{n,m}n^2e^{-\pi\alpha(yn^2+\frac{(m+\frac{n}{2})^2}{y})}, \\
\hbox{double sum B}:&=\sum_{n,m} (n^2-\frac{(m+\frac{n}{2})^2}{y^2})^2e^{-2\pi\alpha(yn^2+\frac{(m+\frac{n}{2})^2}{y})}.
\endaligned\end{equation}

We shall estimate these two double sums of \eqref{gghh} in Lemmas \ref{Lemma414} and \ref{Lemma415}.

\begin{lemma}\label{Lemma414} We have the following  upper bound function of $\sum_{n,m}n^2 e^{-\pi\alpha(yn^2+\frac{(m+\frac{n}{2})^2}{y})}$:
\begin{equation}\aligned\nonumber
\sum_{n,m}n^2 e^{-\pi\alpha(yn^2+\frac{(m+\frac{n}{2})^2}{y})}\leq4e^{-\pi\alpha(y+\frac{1}{4y})}\cdot\big(1+\epsilon_a\big),
\endaligned\end{equation}
where $\epsilon_a$ is small and can be explicitly controlled by
$$
\epsilon_a:=\epsilon_{a,1}+\epsilon_{a,2}+\epsilon_{a,3}+\epsilon_{a,4}
$$
and
\begin{equation}\aligned\nonumber
\epsilon_a\rightarrow0\;\;\hbox{as}\;\;y\mapsto\infty.
\endaligned\end{equation}

Here each $\epsilon_{a,j}(j=1,2,3,4)$ is small and expressed by
\begin{equation}\aligned\nonumber
\epsilon_{a,1}:&=\sum_{n=2}^\infty(2n-1)^2 e^{-\pi\alpha y((2n-1)^2-1)}\\
\epsilon_{a,2}:&=\sum_{n=2}^\infty e^{-\frac{\pi\alpha}{4y}((2n-1)^2-1)}\\
\epsilon_{a,3}:&=\epsilon_{a,1}\cdot\epsilon_{a,2}\\
\epsilon_{a,4}:&=2 e^{-\pi\alpha(3y-\frac{1}{4y})}\big(1+\sum_{n=2}^\infty n^2 e^{-4\pi\alpha y(n^2-1)}\big)\cdot\vartheta_3(\frac{\alpha}{y})
.
\endaligned\end{equation}

\end{lemma}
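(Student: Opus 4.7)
The plan is to exploit the parity structure of $n$ in the double sum. The quantity $(m+\frac{n}{2})^2$ behaves differently depending on whether $n$ is even or odd, so I would split
\[
\sum_{(n,m)\in\mathbb{Z}^2} n^2 e^{-\pi\alpha\big(y n^2+\frac{(m+n/2)^2}{y}\big)} \;=\; S_{\mathrm{odd}} + S_{\mathrm{even}},
\]
according to the parity of $n$. Note that the term $n=0$ is killed by the factor $n^2$, which is precisely why the double sum factorizes cleanly in each parity class.

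For $S_{\mathrm{odd}}$, write $n=2k+1$; then $m+\tfrac{n}{2}=m+k+\tfrac{1}{2}$, and a shift $m\mapsto m-k$ decouples the two indices, giving
\[
S_{\mathrm{odd}}=\Big(\sum_{n\text{ odd}} n^2 e^{-\pi\alpha y n^2}\Big)\Big(\sum_{m\in\mathbb{Z}} e^{-\pi\alpha (m+1/2)^2/y}\Big).
\]
The first factor is $2e^{-\pi\alpha y}\bigl(1+\epsilon_{a,1}\bigr)$, obtained by pulling out the $n=\pm1$ contributions and using the definition of $\epsilon_{a,1}$. The second factor, by symmetry under $m\mapsto -1-m$, equals $2e^{-\pi\alpha/(4y)}\bigl(1+\epsilon_{a,2}\bigr)$ where the leading term comes from $m=0,-1$ and the tail is $\epsilon_{a,2}$. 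Multiplying these out gives the principal contribution $4e^{-\pi\alpha(y+1/(4y))}(1+\epsilon_{a,1}+\epsilon_{a,2}+\epsilon_{a,1}\epsilon_{a,2})$, and $\epsilon_{a,3}:=\epsilon_{a,1}\epsilon_{a,2}$ is defined exactly to absorb the cross term.

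For $S_{\mathrm{even}}$, write $n=2k$; then $m+\tfrac{n}{2}=m+k$, and after shifting the $m$-sum becomes $\vartheta_3(\alpha/y)$. Hence
\[
S_{\mathrm{even}}=\vartheta_3(\tfrac{\alpha}{y})\sum_{k\in\mathbb{Z},\,k\ne 0}(2k)^2 e^{-4\pi\alpha y k^2}
=8 e^{-4\pi\alpha y}\vartheta_3(\tfrac{\alpha}{y})\Big(1+\sum_{n\ge 2}n^2 e^{-4\pi\alpha y(n^2-1)}\Big).
\]
To put this on the common scale $4e^{-\pi\alpha(y+1/(4y))}$, factor out that quantity: the ratio is exactly $2e^{-\pi\alpha(3y-1/(4y))}\vartheta_3(\alpha/y)\bigl(1+\sum_{n\ge 2}n^2 e^{-4\pi\alpha y(n^2-1)}\bigr)=\epsilon_{a,4}$.

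Combining the two contributions yields the stated bound with $\epsilon_a=\epsilon_{a,1}+\epsilon_{a,2}+\epsilon_{a,3}+\epsilon_{a,4}$; each of these is manifestly positive, and since the exponents $(2n-1)^2-1$, $(2n-1)^2-1$, $(n^2-1)$ (respectively under the relevant base rates) are bounded below and $\vartheta_3(\alpha/y)\to 1$ as $y\to\infty$ with $\alpha\ge 1$, we obtain $\epsilon_a\to 0$ as $y\to\infty$. No real obstacle arises: the argument is a clean parity decomposition followed by a Fubini-type factorization. The only point that requires care is bookkeeping the change-of-variable shifts so that the residual sums match the explicit expressions defining $\epsilon_{a,1},\ldots,\epsilon_{a,4}$; in particular the factor $2$ in $\epsilon_{a,4}$ comes from turning the one-sided sum $\sum_{k\ge 1}$ into a two-sided sum, matched against the factor $4$ isolated in the odd case.
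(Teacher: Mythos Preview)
Your proof is correct and follows essentially the same route as the paper: a parity split (the paper phrases it as the change of variables $p=n$, $q=2m+n$ followed by splitting on $p\equiv q\equiv 0$ or $1\pmod 2$, which is exactly your odd/even decomposition of $n$), after which each piece factorizes and matches the defining expressions for $\epsilon_{a,1},\dots,\epsilon_{a,4}$; in fact both arguments yield equality, not merely the stated upper bound. One small slip: in your last paragraph you write $\vartheta_3(\alpha/y)\to 1$ as $y\to\infty$, but actually $\vartheta_3(\alpha/y)\sim\sqrt{y/\alpha}\to\infty$; the conclusion $\epsilon_{a,4}\to 0$ is still correct because the exponential factor $e^{-\pi\alpha(3y-1/(4y))}$ dominates this polynomial growth.
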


\begin{proof}
We shall divide the sum into two parts,
\begin{equation}\aligned\nonumber
\sum_{n,m}n^2 e^{-\pi\alpha(yn^2+\frac{(m+\frac{n}{2})^2}{y})}
&=\sum_{p,q,p\equiv q(mod2)}p^2 e^{-\pi\alpha(yp^2+\frac{q^2}{4y})}\\
&=\sum_{p\equiv q\equiv0(mod2)}p^2 e^{-\pi\alpha(yp^2+\frac{q^2}{4y})}+\sum_{p\equiv q\equiv1(mod2)}p^2 e^{-\pi\alpha(yp^2+\frac{q^2}{4y})}.
\endaligned\end{equation}
For convenience, we denote that
\begin{equation}\aligned\nonumber
J_1:&=\sum_{p\equiv q\equiv0(mod2)}p^2 e^{-\pi\alpha(yp^2+\frac{q^2}{4y})},\\
 J_2:&=\sum_{p\equiv q\equiv1(mod2)}p^2 e^{-\pi\alpha(yp^2+\frac{q^2}{4y})}.
\endaligned\end{equation}
Then
\begin{equation}\aligned\label{J1J2g}
\sum_{n,m}n^2 e^{-\pi\alpha(yn^2+\frac{(m+\frac{n}{2})^2}{y})}
=J_1+J_2.
\endaligned\end{equation}

We  now estimate $J_1$ and $J_2$ respectively. First $J_1$ can be rewritten as
\begin{equation}\aligned\label{J1g}
J_1=&\sum_{p\equiv q\equiv0(mod2)}p^2 e^{-\pi\alpha(yp^2+\frac{q^2}{4y})}
=\sum_{p=2n,q=2m}p^2 e^{-\pi\alpha(yp^2+\frac{q^2}{4y})}\\
=&4\sum_{n}n^2 e^{-4\pi\alpha y n^2}\sum_m e^{-\pi\frac{\alpha}{y}m^2}
=8\sum_{n=1}^\infty n^2 e^{-4\pi\alpha yn^2}\cdot(1+2\sum_{m=1}^\infty e^{-\pi\frac{\alpha}{y}m^2})\\
=& 8e^{-4\pi\alpha y}(1+\sum_{n=2}^\infty n^2 e^{-4\pi\alpha y(n^2-1)})\cdot(1+2\sum_{m=1}^\infty e^{-\pi\frac{\alpha}{y}m^2})\\
=&4 e^{-\pi\alpha(y+\frac{1}{4y})}\cdot \epsilon_{a,4},
\endaligned\end{equation}
as we can see later, $J_1$ is the remainder terms.

Next $J_2$ can be deformed as
\begin{equation}\aligned\label{J2g}
J_2=&\sum_{p\equiv q\equiv1(mod2)}p^2 e^{-\pi\alpha(yp^2+\frac{q^2}{4y})}
=\sum_{p=2n-1,q=2m-1}p^2 e^{-\pi\alpha(yp^2+\frac{q^2}{4y})}\\
=&4\sum_{n=1}^\infty (2n-1)^2 e^{-\pi\alpha y(2n-1)^2}\cdot\sum_{m=1}^\infty e^{-\pi\alpha\frac{(2m-1)^2}{4y}}\\
=&4 e^{-\pi\alpha(y+\frac{1}{4y})}\cdot(1+\sum_{n=2}^\infty (2n-1)^2 e^{-\pi\alpha y((2n-1)^2-1)})
\cdot(1+\sum_{m=2}^\infty e^{-\frac{\pi\alpha}{4y}((2m-1)^2-1)})\\
=&4 e^{-\pi\alpha(y+\frac{1}{4y})}\cdot(1+\epsilon_{a,1}+\epsilon_{a,2}+\epsilon_{a,1}\cdot\epsilon_{a,2})\\
=&4 e^{-\pi\alpha(y+\frac{1}{4y})}\cdot(1+\epsilon_{a,1}+\epsilon_{a,2}+\epsilon_{a,3}).
\endaligned\end{equation}

The result follows by \eqref{J1J2g}, \eqref{J1g} and \eqref{J2g}.

\end{proof}

\begin{lemma}\label{Lemma415} We have the following  upper bound
\begin{equation}\aligned\nonumber
\sum_{n,m}(n^2-\frac{(m+\frac{n}{2})^2}{y^2})^2 e^{-2\pi\alpha (yn^2+\frac{(m+\frac{n}{2})^2}{y})}\leq
\frac{2}{y^4}e^{-2\pi\frac{\alpha}{y}}\cdot(1+\epsilon_b),
\endaligned\end{equation}
where $\epsilon_b$ is small and consist of four smaller parts
\begin{equation}\aligned\nonumber
\epsilon_b:=\epsilon_{b,1}+\epsilon_{b,2}+\epsilon_{b,3}+\epsilon_{b,4},
\endaligned\end{equation}
and
\begin{equation}\aligned\nonumber
\epsilon_b\rightarrow0\;\;\hbox{as}\;\;y\mapsto\infty.
\endaligned\end{equation}

Here
\begin{equation}\aligned\nonumber
\epsilon_{b,1}
:&=2y^4 e^{-2\pi\alpha y}\cdot(1+\sum_{n=2}^\infty e^{-\frac{2\pi\alpha}{y}((2n-1)^2-1)})\cdot
(1+\sum_{n=2}^\infty (2n-1)^4 e^{-2\pi\alpha y ((2n-1)^2-1)})\\
\epsilon_{b,2}
:&=\frac{1}{8}e^{-2\pi\alpha y}\cdot(1+\sum_{n=2}^\infty (2n-1)^4 e^{-\frac{2\pi\alpha}{y}((2n-1)^2-1)})
\cdot(1+\sum_{n=2}^\infty e^{-2\pi\alpha y((2n-1)^2-1)})\\
\epsilon_{b,3}
:&=16y^4 e^{-\pi \alpha (8y-\frac{2}{y})}\cdot(1+\sum_{n=2}^\infty n^4e^{-8\pi\alpha y(n^2-1)})\cdot(1+2\sum_{n=1}^\infty e^{-2\pi\frac{\alpha}{y}n^2})\\
\epsilon_{b,4}
:&=y^4 e^{-\pi \alpha (8y-\frac{2}{y})}\cdot(1+\sum_{n=2}^\infty e^{-8\pi\alpha y(n^2-1)})\cdot(1+2\sum_{n=1}^\infty \frac{n^4}{y^4}e^{-2\pi\frac{\alpha}{y}n^2}).
\endaligned\end{equation}

\end{lemma}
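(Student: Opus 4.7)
The plan is to mirror the parity-decomposition strategy already used successfully in Lemma~\ref{Lemma414}. I would begin with the linear substitution $p=n$, $q=2m+n$, under which $(m+n/2)^2=q^2/4$ and the constraint becomes $p\equiv q\ (\bmod\,2)$, so the sum can be rewritten as
\[
\sum_{\substack{p,q\in\mathbb Z\\ p\equiv q\,(\bmod\,2)}}\Bigl(p^2-\tfrac{q^2}{4y^2}\Bigr)^2 e^{-2\pi\alpha\bigl(yp^2+\tfrac{q^2}{4y}\bigr)}.
\]
This decomposes naturally into the even-even part (parametrized by $p=2a$, $q=2b$ with $a,b\in\mathbb Z$) and the odd-odd part (parametrized by $p=\pm(2a-1)$, $q=\pm(2b-1)$ with $a,b\ge 1$, contributing an overall factor $4$ from the four sign choices).

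Inside the even-even piece the single pair $a=0$, $b=\pm1$ already contributes exactly $\frac{2}{y^4}e^{-2\pi\alpha/y}$, which is precisely the ``$1$'' appearing in $1+\epsilon_b$. For every remaining term I would apply the elementary inequality
\[
\Bigl(p^2-\tfrac{q^2}{4y^2}\Bigr)^2 \;\le\; p^4+\tfrac{q^4}{16 y^4},
\]
which is valid because the cross term $-2\cdot p^2\cdot q^2/(4y^2)$ in the expansion of $(A-B)^2$ is non-positive when $A,B\ge 0$. With this bound, each remaining double sum factorizes as a product of two one-dimensional sums, producing four natural sub-pieces: the $p^4$ and $q^4/(16y^4)$ pieces of the odd-odd sum, and the $p^4$ and $q^4/(16y^4)$ pieces of the even-even tail with $a\neq 0$. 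These four sub-pieces will correspond one-to-one with $\epsilon_{b,1},\epsilon_{b,2},\epsilon_{b,3},\epsilon_{b,4}$ respectively; in each one the minimal exponential factor is pulled out ($e^{-2\pi\alpha y}$ from odd $p$, $e^{-\pi\alpha/(2y)}$ from odd $q$, $e^{-8\pi\alpha y}$ from $a\neq 0$), and the remainder is written as a convergent ``$1+\sum_{n\ge 2}(\cdots)$'' series in exactly the manner in which $\epsilon_{a,1}$ and $\epsilon_{a,2}$ were assembled in the proof of Lemma~\ref{Lemma414}.

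The principal technical difficulty lies in the bookkeeping of the exponential prefactors after normalizing the whole inequality by $\frac{2}{y^4}e^{-2\pi\alpha/y}$. One must track how the various pieces $e^{-2\pi\alpha y}$, $e^{-8\pi\alpha y}$, $e^{-\pi\alpha/(2y)}$, and the reciprocal normalization $e^{+2\pi\alpha/y}$ combine with the powers of $y$ to produce the stated prefactors $2y^4 e^{-2\pi\alpha y}$ in $\epsilon_{b,1}$, $\tfrac{1}{8}e^{-2\pi\alpha y}$ in $\epsilon_{b,2}$, and $16y^4 e^{-\pi\alpha(8y-2/y)}$, $y^4 e^{-\pi\alpha(8y-2/y)}$ in $\epsilon_{b,3}$ and $\epsilon_{b,4}$. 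This is routine but tedious calculus with exponentials, and it is also here that one must choose the algebraic inequality sharply enough — possibly refining $(A-B)^2\le A^2+B^2$ to keep the constants tight — so that the numerical coefficients $2$, $\tfrac{1}{8}$, $16$, $1$ in the four $\epsilon_{b,j}$'s emerge with exactly the stated values. Once this identification is carried out, summing the four pieces together with the leading $\frac{2}{y^4}e^{-2\pi\alpha/y}$ yields the claimed upper bound, and the observation that each $\epsilon_{b,j}\to 0$ as $y\to\infty$ follows from the fact that every remainder series contains at least one exponentially small prefactor in either $y\alpha$ or $\alpha/y$.
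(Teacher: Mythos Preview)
Your approach is essentially the paper's: the same substitution $p=n,\,q=2m+n$, the same parity split into even--even and odd--odd, extraction of the leading contribution $\frac{2}{y^4}e^{-2\pi\alpha/y}$ from $(a,b)=(0,\pm1)$, and the same use of $(A-B)^2\le A^2+B^2$ to factorize each remainder into products of one-dimensional series. Your identification of the four sub-pieces with $\epsilon_{b,1},\dots,\epsilon_{b,4}$ is also the paper's correspondence ($\mathcal{K}_b$ gives $\epsilon_{b,1},\epsilon_{b,2}$; the even--even tail gives $\epsilon_{b,3},\epsilon_{b,4}$).

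One point to watch in your bookkeeping: the phrase ``even--even tail with $a\neq 0$'' does not by itself cover the terms $a=0,\ |b|\ge 2$, which are part of the complement of $(0,\pm1)$ and contribute $\sum_{|b|\ge2}\tfrac{b^4}{y^4}e^{-2\pi\alpha b^2/y}$. The paper handles this by writing the even--even remainder as the deliberately overcounting sum $\mathcal{K}_{a,1}+\mathcal{K}_{a,2}$ with $\mathcal{K}_{a,1}=\{a\neq 0,\ \text{all }b\}$ and $\mathcal{K}_{a,2}=\{\text{all }a,\ b\notin\{-1,1\}\}$; the second block is what picks up those $a=0$ leftovers. If you keep only the $a\neq 0$ block you will not arrive at a valid upper bound, so be sure to include the analogous $b\notin\{\pm1\}$ block (or simply enlarge your $b$-sums to run over all of $\mathbb{Z}$ once you have peeled off the leading pair).
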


\begin{proof}
We shall divide the sum into two different parts as follows
\begin{equation}\aligned\nonumber
&\sum_{n,m}(n^2-\frac{(m+\frac{n}{2})^2}{y^2})^2 e^{-2\pi\alpha (yn^2+\frac{(m+\frac{n}{2})^2}{y})}\\
&=\sum_{p,q,p\equiv q(mod2)}(p^2-\frac{q^2}{y^2})^2 e^{-2\pi\alpha (yp^2+\frac{q^2}{y})}\\
&=\sum_{p,q,p\equiv q\equiv0(mod2)}(p^2-\frac{q^2}{y^2})^2 e^{-2\pi\alpha (yp^2+\frac{q^2}{y})}\\
&+\sum_{p,q,p\equiv q\equiv1(mod2)}(p^2-\frac{q^2}{y^2})^2 e^{-2\pi\alpha (yp^2+\frac{q^2}{y})}.
\endaligned\end{equation}

For convenience, one denotes that
\begin{equation}\aligned\nonumber
\mathcal{K}_a:&=\sum_{p,q,p\equiv q\equiv0(mod2)}(p^2-\frac{q^2}{y^2})^2 e^{-2\pi\alpha (yp^2+\frac{q^2}{y})},\\
\mathcal{K}_b:&=\sum_{p,q,p\equiv q\equiv1(mod2)}(p^2-\frac{q^2}{y^2})^2 e^{-2\pi\alpha (yp^2+\frac{q^2}{y})}.
\endaligned\end{equation}
Hence we have
\begin{equation}\aligned\nonumber
\sum_{n,m}(n^2-\frac{(m+\frac{n}{2})^2}{y^2})^2 e^{-2\pi\alpha (yn^2+\frac{(m+\frac{n}{2})^2}{y})}
=\mathcal{K}_a+\mathcal{K}_b.
\endaligned\end{equation}
One deforms $\mathcal{K}_a$ and $\mathcal{K}_b$ respectively:
\begin{equation}\aligned\nonumber
\mathcal{K}_a&=\sum_{p,q,p\equiv q\equiv0(mod2)}(p^2-\frac{q^2}{y^2})^2 e^{-2\pi\alpha (yp^2+\frac{q^2}{y})}\\
&=\sum_{n,m}(4n^2-\frac{m^2}{y^2})^2 e^{-2\pi\alpha(4n^2 y+\frac{m^2}{y})}
\endaligned\end{equation}
To leading order, we single out the major terms by regrouping the terms as follows
\begin{equation}\aligned\nonumber
\mathcal{K}_a
&=\sum_{n,m}(4n^2-\frac{m^2}{y^2})^2 e^{-2\pi\alpha(4n^2 y+\frac{m^2}{y})}\\
&=\frac{2}{y^4}e^{-2\pi\frac{\alpha}{y}}
+\sum_{n\not\in\{0\},m}(4n^2-\frac{m^2}{y^2})^2 e^{-2\pi\alpha(4n^2 y+\frac{m^2}{y})}\\
&\;\;\;\;\;\;\;\;\;\;\;\;\;\;\;\;\;+\sum_{m\not\in\{-1,1\},n}(4n^2-\frac{m^2}{y^2})^2 e^{-2\pi\alpha(4n^2 y+\frac{m^2}{y})}.
\endaligned\end{equation}
To further simplify the structure, one denotes that
\begin{equation}\aligned\nonumber
\mathcal{K}_{a,1}:&=\sum_{n\not\in\{0\},m}(4n^2-\frac{m^2}{y^2})^2 e^{-2\pi\alpha(4n^2 y+\frac{m^2}{y})},\\
\mathcal{K}_{a,2}:&=\sum_{m\not\in\{-1,1\},n}(4n^2-\frac{m^2}{y^2})^2 e^{-2\pi\alpha(4n^2 y+\frac{m^2}{y})}.
\endaligned\end{equation}
Then,
\begin{equation}\aligned\label{Ka}
\mathcal{K}_a=
\frac{2}{y^4}e^{-2\pi\frac{\alpha}{y}}
+\mathcal{K}_{a,1}+\mathcal{K}_{a,2}
\endaligned\end{equation}
and
\begin{equation}\aligned\label{minus}
\sum_{n,m}(n^2-\frac{(m+\frac{n}{2})^2}{y^2})^2 e^{-2\pi\alpha (yn^2+\frac{(m+\frac{n}{2})^2}{y})}
=\frac{2}{y^4}e^{-2\pi\frac{\alpha}{y}}
+\mathcal{K}_{a,1}+\mathcal{K}_{a,2}+\mathcal{K}_b.
\endaligned\end{equation}
To control $\mathcal{K}_{a,j}, j=1,2$, we use a basic mean value inequality and
\begin{equation}\aligned\nonumber
\mathcal{K}_{a,1}&=\sum_{n\not\in\{0\},m}(4n^2-\frac{m^2}{y^2})^2 e^{-2\pi\alpha(4n^2 y+\frac{m^2}{y})}\\
&\leq\sum_{n\not\in\{0\},m}16n^4e^{-2\pi\alpha(4n^2 y+\frac{m^2}{y})}+\sum_{n\not\in\{0\},m}\frac{m^4}{y^4}e^{-2\pi\alpha(4n^2 y+\frac{m^2}{y})}\\
&=32\sum_{n=1}^\infty n^4e^{-8\pi\alpha y n^2}\sum_m e^{-2\pi\frac{\alpha}{y}m^2}
+2\sum_{n=1}^\infty e^{-8\pi\alpha y n^2}\sum_m \frac{m^4}{y^4}e^{-2\pi\frac{\alpha}{y}m^2}\\
&=\frac{2}{y^4}e^{-2\pi\frac{\alpha}{y}}\cdot \sigma_{\mathcal{K}_1},
\endaligned\end{equation}
where we single  out the small remainder terms denoted by $\sigma_{\mathcal{K}}$ as follows
\begin{equation}\aligned\label{K1}
 \sigma_{\mathcal{K}_1}
 :&=
 16y^4 e^{-\pi\alpha(8y-\frac{2}{y})}\cdot (1+\sum_{n=2}^\infty n^4 e^{-8\pi\alpha y(n^2-1)})
 \cdot (1+2\sum_{n=1}^\infty e^{-2\pi\frac{\alpha}{y}n^2})\\
 &+ e^{-\pi\alpha (8y-\frac{2}{y})}
 \cdot (1+\sum_{n=2}^\infty e^{-8\pi\alpha y(n^2-1)})
 \cdot (1+2\sum_{n=1}^\infty n^4e^{-2\pi\frac{\alpha}{y}n^2})
\endaligned\end{equation}
Similar to $\mathcal{K}_{a,2}$,
\begin{equation}\aligned\nonumber
\mathcal{K}_{a,2}&=\sum_{m\not\in\{-1,1\},n}(4n^2-\frac{m^2}{y^2})^2 e^{-2\pi\alpha(4n^2 y+\frac{m^2}{y})}\\
&\leq\sum_{m\not\in\{-1,1\},n} 16n^4 e^{-2\pi\alpha(4n^2 y+\frac{m^2}{y})}
+\sum_{m\not\in\{-1,1\},n} \frac{m^4}{y^4} e^{-2\pi\alpha(4n^2 y+\frac{m^2}{y})}\\
&\leq\sum_{m,n} 16n^4 e^{-2\pi\alpha(4n^2 y+\frac{m^2}{y})}
+\sum_{m\not\in\{-1,1\},n} \frac{m^4}{y^4} e^{-2\pi\alpha(4n^2 y+\frac{m^2}{y})}\\
&\leq32\sum_{n=1}^\infty n^4 e^{-8\pi\alpha y n^2}\sum_{m} e^{-2\pi\frac{\alpha}{y}m^2}
+2\sum_{m=2}^\infty \frac{m^4}{y^4} e^{-2\pi\frac{\alpha}{y}m^2}\sum_{n} e^{-8\pi\alpha y n^2}\\
&=\frac{2}{y^4}e^{-2\pi\frac{\alpha}{y}}\cdot \sigma_{\mathcal{K}_2},
\endaligned\end{equation}
where
\begin{equation}\aligned\label{K2}
\sigma_{\mathcal{K}_2}&=16e^{-\pi\alpha (8y-\frac{2}{y})}\cdot(1+\sum_{n=2}^\infty n^4 e^{-8\pi\alpha y (n^2-1)})\cdot
(1+2\sum_{m=1}^\infty e^{-2\pi\frac{\alpha}{y}m^2})\\
&\;+\sum_{m=2}^\infty m^4 e^{-2\pi\frac{\alpha}{y}(m^2-1)}\cdot(1+2\sum_{m=1}^\infty e^{-8\pi\alpha y n^2}).
\endaligned\end{equation}
Recall in \eqref{Ka}, one has
\begin{equation}\aligned\label{kka}
\mathcal{K}_a&=
\frac{2}{y^4}e^{-2\pi\frac{\alpha}{y}}
+\mathcal{K}_{a,1}+\mathcal{K}_{a,2}
&=\frac{2}{y^4}e^{-2\pi\frac{\alpha}{y}}\cdot(1+\sigma_{\mathcal{K}_1}+\sigma_{\mathcal{K}_2}),
\endaligned\end{equation}
where $\sigma_{\mathcal{K}_1}$ and $\sigma_{\mathcal{K}_2}$ are defined in \eqref{K1} and \eqref{K2} respectively.

Next, we estimate $\mathcal{K}_b$.
\begin{equation}\aligned\label{kkb}
\mathcal{K}_b&=\sum_{p,q,p\equiv q\equiv1(mod2)}(p^2-\frac{q^2}{y^2})^2 e^{-2\pi\alpha (yp^2+\frac{q^2}{y})}\\
&=\sum_{n,m}((2n-1)^2-\frac{(2m-1)^2}{4y^2})^2e^{-2\pi\alpha (y(2n-1)^2+\frac{(2m-1)^2}{y})}\\
&\leq\sum_{n,m}(2n-1)^4e^{-2\pi\alpha (y(2n-1)^2+\frac{(2m-1)^2}{y})}
+\sum_{n,m}\frac{(2m-1)^4}{16y^4}e^{-2\pi\alpha (y(2n-1)^2+\frac{(2m-1)^2}{y})}\\
&=\sum_m e^{-2\pi\frac{\alpha}{y}(2m-1)^2}\cdot\sum_n (2n-1)^4e^{-2\pi\alpha y(2n-1)^2}\\
&\;\;\;+\sum_m \frac{(2m-1)^4}{16y^4}e^{-2\pi\frac{\alpha}{y}(2m-1)^2}\cdot\sum_n e^{-2\pi\alpha y(2n-1)^2}\\
&=\frac{2}{y^4}e^{-2\pi\frac{\alpha}{y}}\cdot \sigma_{\mathcal{K}_3},
\endaligned\end{equation}
where
\begin{equation}\aligned\label{K3}
 \sigma_{\mathcal{K}_3}:=2y^4 e^{-2\pi\alpha y}\cdot(1+\sum_{n=2}^\infty e^{-2\pi\frac{\alpha}{y}((2n-1)^2-1)})
 \cdot(1+\sum_{n=2}^\infty (2n-1)^4 e^{-2\pi\alpha y((2n-1)^2-1)})\\
 +\frac{1}{8}e^{-2\pi \alpha y}\cdot(1+\sum_{n=2}^\infty (2n-1)^4 e^{-2\pi\frac{\alpha}{y}((2n-1)^2-1)})
 \cdot(1+\sum_{n=2}^\infty  e^{-2\pi \alpha y ((2n-1)^2-1)}).
\endaligned\end{equation}
Combining \eqref{minus} with \eqref{kka} and \eqref{kkb}, one deduces that
\begin{equation}\aligned\label{minusaaa}
\sum_{n,m}(n^2-\frac{(m+\frac{n}{2})^2}{y^2})^2 e^{-2\pi\alpha (yn^2+\frac{(m+\frac{n}{2})^2}{y})}
&=\frac{2}{y^4}e^{-2\pi\frac{\alpha}{y}}
+\mathcal{K}_{a,1}+\mathcal{K}_{a,2}+\mathcal{K}_b\\
&\leq\frac{2}{y^4}e^{-2\pi\frac{\alpha}{y}}\cdot(1+\sigma_{\mathcal{K}_1}+\sigma_{\mathcal{K}_2}+\sigma_{\mathcal{K}_3}),
\endaligned\end{equation}
where $\sigma_{\mathcal{K}_1}$, $\sigma_{\mathcal{K}_2}$ and $\sigma_{\mathcal{K}_3}$ are defined in
\eqref{K1}, \eqref{K2} and \eqref{K3} respectively. The inequality \eqref{minusaaa} yields the result.

\end{proof}

The next two Lemmas provide the lower bound functions of the double sums in Lemma \ref{Lemma413}, where the positiveness is used effectively.

\begin{lemma}\label{Lemma416} A lower bound function of the double sum $\sum_{n,m} (n^2-\frac{(m+\frac{n}{2})^2}{y^2})^2e^{-\pi\alpha(yn^2+\frac{(m+\frac{n}{2})^2}{y})}$ is as follows
 \begin{equation}\nonumber\aligned
\sum_{n,m} (n^2-\frac{(m+\frac{n}{2})^2}{y^2})^2e^{-\pi\alpha(yn^2+\frac{(m+\frac{n}{2})^2}{y})}
\geq\frac{2}{y^4} e^{-\pi\frac{\alpha}{y}}+4(1-\frac{1}{4y^2})^2 e^{-\pi\alpha(y+\frac{1}{4y})}.
\endaligned\end{equation}

\end{lemma}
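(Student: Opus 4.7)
The proof is a direct term-selection argument since every summand is manifestly nonnegative (a square times an exponential). The plan is simply to identify, in closed form, the two clusters of lattice points that produce the two exponentials on the right-hand side, and then discard everything else.

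First I would examine the exponent $-\pi\alpha(yn^{2}+\frac{(m+n/2)^{2}}{y})$ to locate the dominant contributions. The smallest value of $yn^{2}+\frac{(m+n/2)^{2}}{y}$ for $(n,m)\neq 0$ with $n=0$ is $1/y$, attained at $(n,m)=(0,\pm 1)$, producing the exponential $e^{-\pi\alpha/y}$. Each of these two terms carries the coefficient $(0-\tfrac{1}{y^{2}})^{2}=\tfrac{1}{y^{4}}$, and summing the two gives exactly $\tfrac{2}{y^{4}}e^{-\pi\alpha/y}$.

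Next I would single out the second cluster: the four points $(n,m)\in\{(1,0),(1,-1),(-1,0),(-1,1)\}$, for which $m+\tfrac{n}{2}=\pm\tfrac12$ and $n^{2}=1$. Each such term has coefficient $\bigl(1-\tfrac{1}{4y^{2}}\bigr)^{2}$ and exponent $e^{-\pi\alpha(y+\frac{1}{4y})}$, so their total contribution is $4\bigl(1-\tfrac{1}{4y^{2}}\bigr)^{2}e^{-\pi\alpha(y+\frac{1}{4y})}$.

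Finally, because every remaining summand is of the form $(\text{real})^{2}\cdot e^{(\cdot)}\ge 0$, discarding them only decreases the sum, which yields the claimed inequality. There is no real obstacle here — the lemma is essentially a bookkeeping step that isolates the two leading exponential scales $e^{-\pi\alpha/y}$ and $e^{-\pi\alpha(y+\frac{1}{4y})}$ that will be matched against the corresponding upper-bound estimates from Lemmas~\ref{Lemma414} and \ref{Lemma415} in the subsequent proof of Lemma~\ref{Lemma42}.
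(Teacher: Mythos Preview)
Your proof is correct and follows exactly the same approach as the paper: select the six lattice points $(n,m)\in\{(0,\pm1)\}$ and $(n,m)\in\{(1,0),(1,-1),(-1,0),(-1,1)\}$, compute their explicit contributions, and discard the remaining nonnegative terms. The paper's argument is identical, differing only in writing the pairs in $(m,n)$ order rather than $(n,m)$.
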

\begin{remark} In the proof of Lemma \ref{Lemma416}$($and Lemma \ref{Lemma417} below$)$, we have used the positive structure of the double sum.
\end{remark}
\begin{proof} The double sum evaluates at
$$(m,n)=\{(1,0),(-1,0)\}
\;\;\hbox{contributing}\;\;\frac{1}{y^4} e^{-\pi\frac{\alpha}{y}}\;\;\hbox{each}
$$
and
$$(m,n)=\{(0,1),(0,-1),(1,-1),(-1,1)\}
\;\;\hbox{contributing}\;\;(1-\frac{1}{4y^2})^2 e^{-\pi\alpha(y+\frac{1}{4y})}\;\;\hbox{each}.
$$
The rest of other terms in the double sum all are positive and hence the result follows.
\end{proof}

\begin{lemma}\label{Lemma417}
A lower bound function of $\sum_{n,m}n^2e^{-2\pi\alpha(yn^2+\frac{(m+\frac{n}{2})^2}{y})}$ is
\begin{equation}\nonumber\aligned
\sum_{n,m}n^2e^{-2\pi\alpha(yn^2+\frac{(m+\frac{n}{2})^2}{y})}
\geq4 e^{-2\pi\alpha (y+\frac{1}{4y})}.
\endaligned\end{equation}

\end{lemma}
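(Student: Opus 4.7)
The plan is to mimic the strategy of Lemma \ref{Lemma416}: single out the lattice points $(n,m)\in\mathbb{Z}^2$ at which the exponent $yn^2+\tfrac{(m+n/2)^2}{y}$ attains its minimum subject to the constraint $n\neq 0$ (which is forced by the $n^2$ prefactor), compute their contribution explicitly, and discard the remaining terms by positivity.

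First, I would observe that the $n=0$ terms vanish identically because of the $n^2$ factor, so only indices with $|n|\geq 1$ contribute. Among those, the quadratic form $yn^2+\tfrac{(m+n/2)^2}{y}$ is minimized by choosing $|n|=1$ and $m$ such that $m+n/2=\pm 1/2$. A direct enumeration gives the four points
\begin{equation*}
(n,m)\in\bigl\{(1,0),\,(1,-1),\,(-1,0),\,(-1,1)\bigr\},
\end{equation*}
at each of which $n^2=1$ and $yn^2+\tfrac{(m+n/2)^2}{y}=y+\tfrac{1}{4y}$. Hence their combined contribution equals $4\,e^{-2\pi\alpha(y+\frac{1}{4y})}$, which is precisely the claimed lower bound.

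Second, every remaining summand is non-negative, being the product of $n^2\geq 0$ and a strictly positive Gaussian factor. Dropping all of them therefore preserves the inequality and yields
\begin{equation*}
\sum_{n,m}n^2 e^{-2\pi\alpha(yn^2+\frac{(m+n/2)^2}{y})}\;\geq\;4\,e^{-2\pi\alpha(y+\frac{1}{4y})}.
\end{equation*}

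There is no real obstacle here; the only thing to be careful about is the elementary check that no $(n,m)$ with $|n|\geq 1$ other than the four listed ones gives a value of the exponent strictly smaller than $y+\tfrac{1}{4y}$ (for $|n|\geq 2$ the first term alone is already $\geq 4y > y+\tfrac{1}{4y}$ since $y\geq \tfrac{\sqrt 3}{2}$, and for $|n|=1$ the minimum of $(m+n/2)^2$ over $m\in\mathbb{Z}$ is exactly $1/4$, attained at the two values listed). This mirrors the bookkeeping used in Lemma \ref{Lemma416}.
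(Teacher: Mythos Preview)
Your proof is correct and is essentially identical to the paper's own argument: the paper also singles out the four lattice points $(m,n)\in\{(0,1),(0,-1),(1,-1),(-1,1)\}$ (the same points as yours, with the coordinate order swapped), notes that each contributes $e^{-2\pi\alpha(y+\frac{1}{4y})}$, and discards the remaining nonnegative terms. Your additional verification that no other $(n,m)$ with $|n|\geq 1$ yields a smaller exponent is a welcome clarification not spelled out in the paper.
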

\begin{proof} The double sum can be evaluated at
$$(m,n)=\{(0,1),(0,-1),(1,-1),(-1,1)\}
\;\;\hbox{contributing}\;\; e^{-2\pi\alpha(y+\frac{1}{4y})}\;\;\hbox{each}.
$$
The rest of other terms in the double sum all are positive and hence the result follows.

\end{proof}

\begin{lemma}\label{Lemma418} We have the following   lower bound estimate

\begin{equation}\aligned\nonumber
\big(\frac{\partial^2}{\partial y^2}+\frac{2}{y}\frac{\partial}{\partial y}\big)(\theta(\alpha;\frac{1}{2}+iy)-\sqrt2\theta(2\alpha;\frac{1}{2}+iy))
\geq\frac{2(\pi\alpha)^2}{y^4}e^{-\pi\frac{\alpha}{y}}\cdot
\mathcal{W}(y;\alpha),
\endaligned\end{equation}
where
\begin{equation}\aligned\label{www111}
\mathcal{W}(y;\alpha):=1+(2(y^2-\frac{1}{4})^2-\frac{4}{\pi\alpha}y^3\cdot(1+\epsilon_{a}))\cdot e^{-\pi\alpha (y-\frac{3}{4y})}
-4\sqrt2(1+\epsilon_{b})\cdot e^{-\pi\frac{\alpha}{y}}.
\endaligned\end{equation}
Here $\epsilon_{a}$ and $\epsilon_{b}$ are defined in Lemmas \ref{Lemma414} and \ref{Lemma415} respectively.
\end{lemma}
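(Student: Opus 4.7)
The plan is to assemble the lower bound directly from Lemmas \ref{Lemma413}--\ref{Lemma417}, treating the two positive and two negative double sums in the identity of Lemma \ref{Lemma413} separately, and then factor out the common scale $\frac{2(\pi\alpha)^2}{y^4} e^{-\pi\alpha/y}$ to recognize the expression $\mathcal{W}(y;\alpha)$.

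First, I would rewrite the identity from Lemma \ref{Lemma413} schematically as
\begin{equation}\nonumber\aligned
\big(\tfrac{\partial^2}{\partial y^2}+\tfrac{2}{y}\tfrac{\partial}{\partial y}\big)\big(\theta(\alpha;\tfrac12+iy)-\sqrt2\,\theta(2\alpha;\tfrac12+iy)\big)
= (\pi\alpha)^2 S_1 + \tfrac{4\sqrt2\pi\alpha}{y} S_2 - \tfrac{2\pi\alpha}{y} S_3 - 4\sqrt2(\pi\alpha)^2 S_4,
\endaligned\end{equation}
where $S_1, S_4$ are the $(n^2-\tfrac{(m+n/2)^2}{y^2})^2$-type sums (at frequencies $\pi\alpha$ and $2\pi\alpha$ respectively), and $S_3, S_2$ are the $n^2$-type sums (at frequencies $\pi\alpha$ and $2\pi\alpha$ respectively). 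The two positive contributions $(\pi\alpha)^2 S_1$ and $\frac{4\sqrt2\pi\alpha}{y}S_2$ want lower bounds on $S_1,S_2$; the two negative contributions want upper bounds on $S_3,S_4$.

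Second, I would plug in the four estimates: Lemma \ref{Lemma416} supplies $S_1 \ge \tfrac{2}{y^4}e^{-\pi\alpha/y} + 4(1-\tfrac{1}{4y^2})^2 e^{-\pi\alpha(y+\frac{1}{4y})}$; Lemma \ref{Lemma417} supplies $S_2 \ge 4 e^{-2\pi\alpha(y+\frac{1}{4y})}\ge 0$, which I would simply discard since its sign is favorable and keeping it only strengthens the bound; Lemma \ref{Lemma414} supplies $S_3 \le 4 e^{-\pi\alpha(y+\frac{1}{4y})}(1+\epsilon_a)$; and Lemma \ref{Lemma415} supplies $S_4 \le \tfrac{2}{y^4} e^{-2\pi\alpha/y}(1+\epsilon_b)$. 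Substituting these yields a lower bound consisting of four exponential terms: one with exponent $-\pi\alpha/y$, one with exponent $-\pi\alpha(y+\tfrac{1}{4y})$ (positive contribution from $S_1$ minus the negative contribution via $S_3$), and one with exponent $-2\pi\alpha/y$ (from $S_4$).

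Third, I would factor out $\frac{2(\pi\alpha)^2}{y^4} e^{-\pi\alpha/y}$. The $S_1$ main term contributes $1$; the $S_1$ secondary term contributes $\tfrac{4(1-\tfrac{1}{4y^2})^2 y^4}{2} \cdot e^{-\pi\alpha(y+\frac{1}{4y})+\pi\alpha/y} = 2(y^2-\tfrac14)^2\, e^{-\pi\alpha(y-\frac{3}{4y})}$ after using $(1-\tfrac{1}{4y^2})^2 y^4 = (y^2-\tfrac14)^2$ and $y+\tfrac{1}{4y}-\tfrac{1}{y} = y-\tfrac{3}{4y}$; the $S_3$ contribution contributes $-\tfrac{8\pi\alpha}{y}\cdot \tfrac{y^4}{2(\pi\alpha)^2}(1+\epsilon_a)\, e^{-\pi\alpha(y-\frac{3}{4y})} = -\tfrac{4 y^3}{\pi\alpha}(1+\epsilon_a)\, e^{-\pi\alpha(y-\frac{3}{4y})}$; and the $S_4$ contribution contributes $-4\sqrt2(1+\epsilon_b)\, e^{-\pi\alpha/y}$. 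Collecting the two terms carrying $e^{-\pi\alpha(y-\frac{3}{4y})}$ gives precisely the coefficient $2(y^2-\tfrac14)^2 - \tfrac{4}{\pi\alpha} y^3 (1+\epsilon_a)$, matching $\mathcal{W}(y;\alpha)$ in \eqref{www111}.

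The main obstacle is purely bookkeeping: the four exponential arguments $-\pi\alpha y$, $-\pi\alpha/y$, $-\pi\alpha(y+\tfrac{1}{4y})$, and $-2\pi\alpha/y$ must be normalized against the common factor $e^{-\pi\alpha/y}$, and the prefactors $(\pi\alpha)^2$, $\pi\alpha/y$ must be normalized against $\tfrac{2(\pi\alpha)^2}{y^4}$. There is no analytical difficulty once the four input estimates are combined, and no positivity or sign needs to be exploited beyond discarding the nonnegative $S_2$ contribution.
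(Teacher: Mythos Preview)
Your proposal is correct and follows essentially the same approach as the paper: both start from the identity of Lemma \ref{Lemma413}, apply the lower bounds of Lemmas \ref{Lemma416}--\ref{Lemma417} to the positive sums and the upper bounds of Lemmas \ref{Lemma414}--\ref{Lemma415} to the negative sums, and then factor out $\tfrac{2(\pi\alpha)^2}{y^4}e^{-\pi\alpha/y}$ to obtain $\mathcal{W}(y;\alpha)$. The only cosmetic difference is that the paper retains the $S_2$ contribution $\tfrac{16\sqrt2\pi\alpha}{y}e^{-2\pi\alpha(y+\frac{1}{4y})}$ in an intermediate line before dropping it, whereas you discard it immediately; either way the final bound is the same.
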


\begin{proof} In view of Lemma \ref{Lemma413}, combining with the bound functions in Lemmas \ref{Lemma414}-\ref{Lemma418}, we have
 \begin{equation}\nonumber\aligned
\big(\frac{\partial^2}{\partial y^2}+\frac{2}{y}\frac{\partial}{\partial y}\big)&\big(\theta(\alpha;\frac{1}{2}+i y)-\sqrt2\theta(2\alpha;\frac{1}{2}+i y)\big)=(\pi\alpha)^2\sum_{n,m} (n^2-\frac{(m+\frac{n}{2})^2}{y^2})^2e^{-\pi\alpha(yn^2+\frac{(m+\frac{n}{2})^2}{y})}\\
&+\frac{4\sqrt2\pi\alpha}{y}\sum_{n,m}n^2e^{-2\pi\alpha(yn^2+\frac{(m+\frac{n}{2})^2}{y})}
-\frac{2\pi\alpha}{y}\sum_{n,m}n^2e^{-\pi\alpha(yn^2+\frac{(m+\frac{n}{2})^2}{y})}\\
&-4\sqrt2(\pi\alpha)^2\sum_{n,m} (n^2-\frac{(m+\frac{n}{2})^2}{y^2})^2e^{-2\pi\alpha(yn^2+\frac{(m+\frac{n}{2})^2}{y})}\\
\geq&(\pi\alpha)^2\frac{2}{y^4}e^{-\pi\frac{\alpha}{y}}+4(\pi\alpha)^2(1-\frac{1}{4y^2})^2 e^{-\pi\alpha(y+\frac{1}{4y})}
+\frac{16\sqrt2\pi\alpha}{y}e^{-2\pi\alpha(y+\frac{1}{4y})}\\
&\;\;-\frac{8\pi\alpha}{y}(1+\epsilon_a)e^{-\pi\alpha(y+\frac{1}{4y})}
-\frac{8\sqrt2(\pi\alpha)^2}{y^4}(1+\epsilon_b) e^{-2\pi\frac{\alpha}{y}}\\
\geq&\frac{2(\pi\alpha)^2}{y^4}e^{-\pi\frac{\alpha}{y}}\cdot
\mathcal{W}(y;\alpha).
\endaligned\end{equation}

\end{proof}

\begin{lemma}\label{Lemma420} Assume that $\alpha\geq1, y\geq\frac{\sqrt3}{2}$. If $\frac{y}{\alpha}\leq3$, then
\begin{equation}\nonumber\aligned
\epsilon_a&=0.1264717\cdots<0.15,\\
\epsilon_b&=0.0054169\cdots<0.006.
\endaligned\end{equation}
\end{lemma}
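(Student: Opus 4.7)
The plan is to bound each of the eight series that compose $\epsilon_a=\sum_{j=1}^{4}\epsilon_{a,j}$ and $\epsilon_b=\sum_{j=1}^{4}\epsilon_{b,j}$ by its supremum over the parameter region
\[
R=\{(\alpha,y):\ \alpha\geq 1,\ y\geq \tfrac{\sqrt{3}}{2},\ y/\alpha\leq 3\},
\]
then to add the resulting explicit numerical bounds and compare with $0.15$ and $0.006$. From the hypothesis one immediately has the auxiliary inequalities $\alpha y\geq\sqrt{3}/2$, $\alpha/y\geq 1/3$, and (since $3y-1/(4y)$ is monotone increasing on $y\geq\sqrt{3}/2$) $3y-1/(4y)\geq 4\sqrt{3}/3$. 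Each $\epsilon_{a,j}$ and $\epsilon_{b,j}$ is a product of a rapidly decaying exponential prefactor times an infinite series whose terms are themselves rapidly decaying; the strategy is to pass these parameter lower bounds into the exponents and then dominate the tails by explicit geometric majorants.

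For $\epsilon_a$ the dominant contribution is $\epsilon_{a,2}$, the only term whose decay rate involves $\alpha/y$ rather than $\alpha y$. Using $\alpha/y\geq 1/3$ one has
\[
\epsilon_{a,2}\leq \sum_{n=2}^{\infty}e^{-\pi((2n-1)^2-1)/12},
\]
which is a super-geometric series with numerical value close to $0.124$. On the same worst-case boundary $y=3\alpha$ one has $\alpha y\geq 3$, so $\epsilon_{a,1}\leq \sum_{n\geq 2}(2n-1)^2 e^{-3\pi((2n-1)^2-1)}$ which is negligible; consequently $\epsilon_{a,3}=\epsilon_{a,1}\epsilon_{a,2}$ is negligible too. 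The remaining $\epsilon_{a,4}$ is controlled by $2e^{-4\sqrt{3}\pi/3}$ times the uniformly bounded factor $\vartheta_3(\alpha/y)\leq\vartheta_3(1/3)$ and a tiny tail series in $\alpha y$. Adding the four explicit bounds produces $\epsilon_a\leq 0.1264\cdots< 0.15$.

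For $\epsilon_b$ the situation is reversed. Every $\epsilon_{b,j}$ carries either $e^{-2\pi\alpha y}$ or $e^{-\pi\alpha(8y-2/y)}$ as its dominant exponential prefactor, so the worst case is the opposite corner $(\alpha,y)=(1,\sqrt{3}/2)$, where $\alpha y$ is smallest. The leading contribution is $\epsilon_{b,1}$, whose prefactor at this corner is $2(\sqrt{3}/2)^4 e^{-\sqrt{3}\pi}\approx 5\cdot 10^{-3}$, with the $\nu$- and $\mu$-type series factors each bounded by their values at the same corner. The term $\epsilon_{b,2}$ carries an additional $1/8$ and is hence an order of magnitude smaller, while $\epsilon_{b,3}, \epsilon_{b,4}$ are doubly suppressed by $e^{-\pi\alpha(8y-2/y)}\leq e^{-8\sqrt{3}\pi/3}$. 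Altogether $\epsilon_b\leq 0.00542\cdots < 0.006$.

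The main obstacle is that $\epsilon_{a,4}, \epsilon_{b,3}, \epsilon_{b,4}$ depend on \emph{both} $\alpha y$ and $\alpha/y$, and these two quantities cannot be minimized simultaneously on $R$ (they satisfy $(\alpha y)(\alpha/y)=\alpha^2\geq 1$). This is resolved by plugging the individual lower bounds $\alpha y\geq\sqrt{3}/2$ and $\alpha/y\geq 1/3$ into the corresponding factors independently; any slack from not optimizing jointly is absorbed by the exceedingly small leading prefactors, leaving comfortable margin below the claimed thresholds $0.15$ and $0.006$.
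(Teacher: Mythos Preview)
Your approach is correct and is exactly the one the paper sketches: use the parameter inequalities $\alpha y\geq\sqrt{3}/2$ and $\alpha/y\geq 1/3$ to bound each exponentially decaying series, then sum the numerical bounds. The paper's own proof is only three sentences and records precisely these two lower bounds as ``used effectively to control the summation,'' so your write-up in fact supplies the details the paper omits.

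One small phrasing issue: when you bound $\epsilon_{a,1}$ you write ``on the same worst-case boundary $y=3\alpha$ one has $\alpha y\geq 3$.'' That inequality is only valid on that particular boundary, whereas a clean upper bound for $\sup_R\epsilon_a$ requires bounding each $\epsilon_{a,j}$ over all of $R$; for $\epsilon_{a,1}$ this means inserting the global minimum $\alpha y\geq\sqrt{3}/2$ instead. Since $\epsilon_{a,1}\leq 9e^{-4\sqrt{3}\pi}\sim 10^{-9}$ either way, the slip is harmless, but it is worth stating the independent-bounds strategy consistently (as you do in the last paragraph) rather than mixing it with a specific-corner computation.
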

\begin{proof} The terms of $\epsilon_a, \epsilon_b$ are exponentially decaying. One needs to use the fact that
$\alpha\geq1, y\geq\frac{\sqrt3}{2}$ and $\frac{\alpha}{y}\geq\frac{1}{3}$. Note that the positive lower bounds of $\frac{\alpha}{y}$ and $\alpha y$ are used effectively to control the summation.

\end{proof}

\begin{lemma}\label{Lemma421} A refined lower bound of $\mathcal{W}(y;\alpha)$, which is defined in \eqref{www111}, is the following
\begin{equation}\aligned\nonumber
\mathcal{W}(y;\alpha)\geq
\begin{cases}
1-(\frac{4(1+\epsilon_a)}{\pi}-\frac{9}{8})-4\sqrt2(1+\epsilon_b) e^{-\pi},\;\hbox{if}\; y\in[\frac{\sqrt3}{2},1];\\
1-(\frac{4(1+\epsilon_a)}{\pi}y^3-2(y^2-\frac{1}{4})^2)\cdot e^{-\frac{\pi}{4}}-4\sqrt2(1+\epsilon_b) e^{-\frac{1}{y_\epsilon}\pi},
\;\hbox{if}\; y\in[1,y_\epsilon],\\
1-4\sqrt2(1+\epsilon_b) e^{-\pi\frac{\alpha}{y}},\;\;\hbox{if}\; y\in[y_\epsilon,\infty).
\end{cases}
\endaligned\end{equation}
Here $y_\epsilon$ is the unique root of
$$2(y^2-\frac{1}{4})^2-\frac{4}{\pi}y^3\cdot(1+\epsilon_{a})=0$$
on $[\frac{\sqrt3}{2},\infty)$. Numerically, $y_\epsilon\cong1.130998\cdots$.

\end{lemma}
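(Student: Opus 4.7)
Introduce the shorthand
\[
A(y;\alpha):=2(y^2-\tfrac14)^2-\tfrac{4(1+\epsilon_a)}{\pi\alpha}y^3,\qquad \phi(y):=y-\tfrac{3}{4y},
\]
so that
\[
\mathcal{W}(y;\alpha)=1+A(y;\alpha)\,e^{-\pi\alpha\phi(y)}-4\sqrt2(1+\epsilon_b)\,e^{-\pi\alpha/y}.
\]
The definition of $y_\epsilon$ is precisely the unique root of $A(\cdot;1)$ on $[\tfrac{\sqrt 3}{2},\infty)$. Since $\alpha\mapsto -\tfrac{1}{\alpha}$ is increasing, $A(y;\alpha)\geq A(y;1)$ whenever $\alpha\geq 1$. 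Also $\phi'(y)=1+\tfrac{3}{4y^2}>0$, so $\phi$ is increasing, with $\phi(\tfrac{\sqrt3}{2})=0$ and $\phi(1)=\tfrac14$. The proof then amounts to splitting $[\tfrac{\sqrt3}{2},\infty)$ into three $y$-regimes matching the three claims, in each of which one exploits the sign/size of $A$ together with an appropriate elementary bound on $e^{-\pi\alpha\phi(y)}$ and $e^{-\pi\alpha/y}$.

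For $y\geq y_\epsilon$ the middle term is harmless: $A(y;\alpha)\geq A(y;1)\geq 0$, so one simply drops it and is left with $1-4\sqrt2(1+\epsilon_b)e^{-\pi\alpha/y}$, which is the third case. For $y\in[1,y_\epsilon]$ the quantity $A(y;\alpha)$ may be negative, but the bound $-A(y;\alpha)\leq \tfrac{4(1+\epsilon_a)}{\pi}y^3-2(y^2-\tfrac14)^2$ (from $\alpha\geq 1$) is nonnegative, and $y\geq 1$ together with $\alpha\geq 1$ gives $\phi(y)\geq \tfrac14$ and hence $e^{-\pi\alpha\phi(y)}\leq e^{-\pi/4}$; simultaneously $\tfrac{\alpha}{y}\geq \tfrac{1}{y_\epsilon}$ forces $e^{-\pi\alpha/y}\leq e^{-\pi/y_\epsilon}$. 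Plugging these into the identity for $\mathcal{W}$ gives the middle case with the $y$-dependence kept intact inside the parenthesis.

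For $y\in[\tfrac{\sqrt 3}{2},1]$ we are in the least favorable regime: $A$ may be negative and $\phi$ degenerates to $0$ at the left endpoint, so only the trivial $e^{-\pi\alpha\phi(y)}\leq 1$ is available uniformly. One therefore uses the same $\alpha\geq 1$ bound on $-A(y;\alpha)$, the elementary ranges $y^3\leq 1$ and the extremal value of $2(y^2-\tfrac14)^2$ on the interval, to produce a constant bound that is independent of $y$. For the tail, $y\leq 1\leq \alpha$ yields $\tfrac{\alpha}{y}\geq 1$ and hence $e^{-\pi\alpha/y}\leq e^{-\pi}$. Assembling these gives the first case.

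The main obstacle is the middle regime $[1,y_\epsilon]$: both $A(y;\alpha)$ and $\phi(y)$ vary, so one cannot merely insert constants but must use the joint bound $A\cdot e^{-\pi\alpha\phi}\geq -\bigl(\tfrac{4(1+\epsilon_a)}{\pi}y^3-2(y^2-\tfrac14)^2\bigr)e^{-\pi/4}$ and propagate the $y$-dependence through. The choice of split at $y_\epsilon$ is dictated by where $A(y;1)$ changes sign, so that dropping $A$ in the third case is legitimate while the worst-case estimate $e^{-\pi\alpha\phi(y)}\leq 1$ is only needed in the smallest $y$-window, where it is harmless because $\phi$ there is genuinely small. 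Each case then reduces to a routine one-variable comparison.
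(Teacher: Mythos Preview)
Your overall strategy coincides with the paper's one-line proof (``each part of $\mathcal{W}(y;\alpha)$ is analyzed separately''), and your arguments for the second and third ranges are correct as written.

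The first range $y\in[\frac{\sqrt3}{2},1]$ has a gap. You propose to bound $-A(y;\alpha)\le \frac{4(1+\epsilon_a)}{\pi}y^3-2(y^2-\frac14)^2$ using $y^3\le 1$ and ``the extremal value of $2(y^2-\frac14)^2$ on the interval.'' But to \emph{upper}-bound $-A$ you must subtract the \emph{minimum} of $2(y^2-\frac14)^2$ on $[\frac{\sqrt3}{2},1]$, and that minimum is $\frac12$ (at $y=\frac{\sqrt3}{2}$), not $\frac98$ (the maximum, at $y=1$). Your method therefore yields only
\[
\mathcal{W}(y;\alpha)\ \geq\ 1-\Big(\frac{4(1+\epsilon_a)}{\pi}-\frac12\Big)-4\sqrt2(1+\epsilon_b)\,e^{-\pi},
\]
not the bound with $\frac98$ recorded in the lemma. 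In fact the stated inequality with $\frac98$ is too strong to be true: at $y=\frac{\sqrt3}{2}$, $\alpha=1$ one has $\phi(y)=0$, so the middle term of $\mathcal{W}$ equals $A(\frac{\sqrt3}{2};1)=\frac12-\frac{4(1+\epsilon_a)}{\pi}\cdot\frac{3\sqrt3}{8}\approx -0.33$, while the lemma would force that term to be at least $-\big(\frac{4(1+\epsilon_a)}{\pi}-\frac98\big)\approx -0.15$. So this looks like a misprint in the paper rather than a failure of your plan; your argument proves the correct version with $\frac12$, but you should flag that the constant $\frac98$ as printed cannot be obtained by the method you outline (nor, apparently, at all).
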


\begin{proof} The proof is based on the explicit expression of $\mathcal{W}(y;\alpha)$ in Lemma \ref{Lemma418}. Each part of
$\mathcal{W}(y;\alpha)$ is analyzed separately.

\end{proof}

\begin{lemma} \label{Lemma422} Assume that $\alpha\geq1$. If $y\in[\frac{\sqrt3}{2},1.8\alpha]$, then
$$\mathcal{W}(y;\alpha)>0.$$
\end{lemma}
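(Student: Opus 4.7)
The plan is a direct case analysis built on the two preceding lemmas. The key observation is that the hypothesis $y \in [\frac{\sqrt{3}}{2}, 1.8\alpha]$ with $\alpha \geq 1$ forces $\frac{y}{\alpha} \leq 1.8 < 3$, so Lemma \ref{Lemma420} applies throughout the region and supplies the explicit bounds $\epsilon_a < 0.15$ and $\epsilon_b < 0.006$. Lemma \ref{Lemma421} then reduces positivity of $\mathcal{W}(y;\alpha)$ to verifying three numerical inequalities, one on each of the subintervals $[\frac{\sqrt{3}}{2}, 1]$, $[1, y_\epsilon]$, and $[y_\epsilon, \infty)$, where $y_\epsilon \cong 1.131$. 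After substituting the $\epsilon$-bounds, each becomes a finite arithmetic check.

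For the first subinterval, the lower bound from Lemma \ref{Lemma421} is independent of $y$ and $\alpha$: plugging in $\epsilon_a \leq 0.15$ and $\epsilon_b \leq 0.006$ and checking $1 - (\frac{4 \cdot 1.15}{\pi} - \frac{9}{8}) - 4\sqrt{2} \cdot 1.006 \cdot e^{-\pi} > 0$ (numerically about $0.41$). For the second subinterval, set $h(y) := \frac{4(1+\epsilon_a)}{\pi} y^3 - 2(y^2 - \frac{1}{4})^2$. By the definition of $y_\epsilon$ one has $h(y_\epsilon) = 0$, and a direct sign analysis of $h'(y) = \frac{12(1+\epsilon_a)}{\pi} y^2 - 8y^3 + 2y$ shows $h' < 0$ on $[1, y_\epsilon]$ (the value at $y=1$ is about $-1.6$, and the negativity only strengthens as $y$ increases). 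Hence $\sup_{[1, y_\epsilon]} h = h(1)$, and the bound reduces to another one-line numerical verification involving $h(1)\, e^{-\pi/4}$ and $4\sqrt{2}(1+\epsilon_b)\, e^{-\pi/y_\epsilon}$.

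The third subinterval is where the upper cutoff $1.8\alpha$ enters. Here the bound from Lemma \ref{Lemma421} reads $1 - 4\sqrt{2}(1+\epsilon_b)\, e^{-\pi \alpha / y}$; using $y \leq 1.8\alpha$ gives $\frac{\alpha}{y} \geq \frac{1}{1.8}$, so the bound is at least
\[
1 - 4\sqrt{2} \cdot 1.006 \cdot e^{-\pi/1.8} \approx 0.006 > 0.
\]
This is the main obstacle and the binding case: the other two subintervals carry comfortable slack, but here the margin is razor-thin and essentially calibrated to the constant $1.8$ in the hypothesis. Any significant enlargement of the interval would break this estimate, which explains the precise choice $y \leq 1.8\alpha$ in the statement.
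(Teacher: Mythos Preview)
Your proof is correct and follows essentially the same three-case strategy as the paper, driven by Lemma~\ref{Lemma421} together with the numerical bounds on $\epsilon_a,\epsilon_b$ from Lemma~\ref{Lemma420}. Your treatment of the middle interval $[1,y_\epsilon]$ is in fact more explicit than the paper's: where the paper simply asserts the numerical lower bound $0.491478\ldots$, you justify it by computing $h'(y)$ and showing $h$ is decreasing on $[1,y_\epsilon]$, so that the worst case occurs at $y=1$; this is a welcome clarification of a step the paper leaves implicit.
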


\begin{proof} The proof follows from Lemma \ref{Lemma421}. Note that
\begin{equation}\aligned\nonumber
1-(\frac{4(1+\epsilon_a)}{\pi}-\frac{9}{8})-4\sqrt2(1+\epsilon_b) e^{-\pi}>0.414852\cdots>0\\
1-(\frac{4(1+\epsilon_a)}{\pi}y^3-2(y^2-\frac{1}{4})^2)\cdot e^{-\frac{\pi}{4}}-4\sqrt2(1+\epsilon_b) e^{-\frac{1}{y_\epsilon}\pi}>0.491478\cdots,
\;\hbox{if}\; y\in[1,y_\epsilon],
\endaligned\end{equation}
and that
\begin{equation}\nonumber\aligned
1-4\sqrt2(1+\epsilon_b) e^{-\pi\frac{\alpha}{y}}>0\;\;\hbox{if}\;\;\frac{\alpha}{y}>0.553493\cdots.
\endaligned\end{equation}
Next, a simple computation shows that
\begin{equation}\nonumber\aligned
\frac{\alpha}{y}>0.553493\cdots\Leftrightarrow y\leq(1.806707\cdots)\alpha.
\endaligned\end{equation}
In view of Lemma \ref{Lemma421}, the result then follows.

\end{proof}

\section{Proofs of Theorems \ref{Th2}-\ref{ThA} }
\setcounter{equation}{0}

\noindent
{\bf Proof of Theorem \ref{Th2}:} By Fourier transform, we have 
\begin{equation}\aligned\nonumber
\theta(\frac{1}{\alpha};z)=\alpha\cdot\theta({\alpha};z), \;\;\alpha>0.
\endaligned\end{equation}
Theorem \ref{Th2} is equivalent to Theorem \ref{Th1}.

\medskip

\noindent
{\bf Proof of Theorem \ref{Th3}-\ref{Th4}:} These two theorems are easy consequences of Theorems \ref{Th1}-\ref{Th2}.

\medskip

\noindent
{\bf Proof of Theorem \ref{ThA}:}  The proof is based on an effective iteration scheme.

\noindent
{\bf Case A: $\beta\leq(\sqrt2)^k$.} We use the scheme
\begin{equation}\aligned\label{KKK}
&\Big(\theta (\alpha; z)-\beta\theta (2^k\alpha; z)\Big)\\
=&\big((\sqrt2)^k-\beta\big)\theta (2^k \alpha; z)
+\sum_{n=0}^{k-1}(\sqrt2)^n\Big(\theta (2^n\alpha; z)-\sqrt2\theta (2^{n+1}\alpha; z)\Big).
\endaligned\end{equation}
Note that all the coefficients in \eqref{KKK} are nonnegative. We apply Theorem \ref{Th1} on each term of \eqref{KKK} to arrive that the minimizer of $\Big(\theta (\alpha; z)-\beta\theta (2^k\alpha; z)\Big)$ is $\frac{1}{2}+i\frac{\sqrt3}{2}$ again in this case.

\medskip

\noindent
{\bf Case B: $\beta>(\sqrt2)^k$.}
The proof of nonexistence of the minimizer is similar to that of Lemma \ref{Lemma41}.

\bigskip

{\bf Acknowledgements.}
 S. Luo is grateful to Professor H.J. Zhao(Wuhan University) for his constant support and encouragement. The research of S. Luo is partially supported by double thousands plan of Jiangxi(jxsq2019101048) and NSFC(No. 12001253). The research of J. Wei is partially supported by NSERC of Canada.

\bigskip


\end{document}